\newcommand{\bp}{\boldsymbol{p}}
\newcommand{\bw}{\boldsymbol{w}}
\newcommand{\K}{\ensuremath{\mathbb{K}}}
\newcommand{\Z}{\ensuremath{\mathbb{Z}}}
\newcommand{\R}{\ensuremath{\mathbb{R}}}
\newcommand{\C}{\ensuremath{\mathbb{C}}}
\newcommand{\PP}{\ensuremath{\mathbb{P}}}
\newcommand{\const}{\operatorname{const}}
\newcommand{\mult}
{\ensuremath{\mathrm{mult}}}
\newcommand{\Int}{\ensuremath{\mathrm{Int}}}
\newcommand{\conv}{\operatorname{Conv}}
\newcommand{\eps}{\ensuremath{\varepsilon}}
\newcommand{\val}{\ensuremath{\mathrm{val}}}
\newcommand{\divi}{\ensuremath{\mathrm{div}}}
\newcommand{\trop}{\ensuremath{\mathrm{trop}}}
\newcommand{\floor}{\ensuremath{\mathrm{floor}}}
\newtheorem{lemma}{Lemma}[section]
\newtheorem{theorem}[lemma]{Theorem}
\newtheorem{proposition}[lemma]{Proposition}
\newtheorem{definition}[lemma]{Definition}
\theoremstyle{remark}
\newtheorem{remark}[lemma]{Remark}
\newtheorem{construction}[lemma]{Construction}
\newtheorem{example}[lemma]{Example}
\newcommand{\quader}{
  \begin{tikzpicture}[z={(.3cm,.2cm)}, 
    line join=round, line cap=round 
    ]
    \definecolor{cof}{RGB}{219,144,71}
    \definecolor{pur}{RGB}{186,146,162}
    \definecolor{greeo}{RGB}{91,173,69}
    \definecolor{greet}{RGB}{52,111,72}

    \coordinate (A1) at (0,0,0);
    \coordinate (A2) at (0,1,0);
    \coordinate (A3) at (2.5,1,0);
    \coordinate (A4) at (2.5,0,0);
    \coordinate (B1) at (0,0,2);
    \coordinate (B2) at (0,1,2);
    \coordinate (B3) at (2.5,1,2);
    \coordinate (B4) at (2.5,0,2);

    \draw[fill=cof,opacity=0.6](A1) -- (A2) -- (A3) -- (A4) -- (A1) ;
    \draw[fill=pur,opacity=0.6](A2) -- (A3) -- (B3) -- (B2) -- (A2) ;
    \draw[fill=greeo,opacity=0.6] (A3) -- (B3) -- (B4) -- (A4) -- (A3);
    \draw[dashed] (A1) -- (B1) -- (B4)  (B1) -- (B2);
  \end{tikzpicture}
}
\newcommand{\toblerone}{
  \begin{tikzpicture}[z={(.3cm,.2cm)}, 
    line join=round, line cap=round 
    ]
    \definecolor{cof}{RGB}{219,144,71}
    \definecolor{pur}{RGB}{186,146,162}
    \definecolor{greeo}{RGB}{91,173,69}
    \definecolor{greet}{RGB}{52,111,72}

    \coordinate (A1) at (0,0,0);
    \coordinate (A2) at (0,1,1);
    \coordinate (A3) at (2.5,1,1);
    \coordinate (A4) at (2.5,0,0);
    \coordinate (B1) at (0,0,2);
    \coordinate (B4) at (2.5,0,2);

    \draw[fill=cof,opacity=0.6](A1) -- (A2) -- (A3) -- (A4) -- (A1) ;
    \draw[fill=greeo,opacity=0.6] (A3) -- (B4) -- (A4) -- (A3);
    \draw[dashed] (A1) -- (B1) -- (B4)  (B1) -- (A2);
  \end{tikzpicture}
}
\newcommand{\simplex}{
  \begin{tikzpicture}[z={(1.3cm,.2cm)}, 
    line join=round, line cap=round 
    ]
    \definecolor{cof}{RGB}{219,144,71}
    \definecolor{pur}{RGB}{186,146,162}
    \definecolor{greeo}{RGB}{91,173,69}
    \definecolor{greet}{RGB}{52,111,72}

    \coordinate (A1) at (0,0,0);
    \coordinate (A2) at (1.5,0,0);
    \coordinate (A3) at (0,1.5,0);
    \coordinate (A4) at (0,0,1.5);

    \draw[fill=cof,opacity=0.6](A1) -- (A2) -- (A3)  -- (A1) ;
    \draw[fill=greeo,opacity=0.6] (A2) -- (A3) -- (A4) -- (A2);
    \draw[dashed] (A4) -- (A1);
\end{tikzpicture}
}
\begin{document}
\title[Tropical floor plans and enumeration 
of multi-nodal surfaces]{Tropical floor plans and enumeration 
of complex and real multi-nodal surfaces}
\author {Hannah Markwig}
\address {Hannah Markwig, Eberhard Karls Universit\"at T\"ubingen, Fachbereich Mathematik, Auf der Morgenstelle 10, 72076 T\"ubingen, Germany }
\email {hannah@math.uni-tuebingen.de}

\author{Thomas Markwig}
\address{Thomas Markwig, Eberhard Karls Universit\"at T\"ubingen, Fachbereich Mathematik, Auf der Morgenstelle 10, 72076 T\"ubingen, Germany}
\email {keilen@math.uni-tuebingen.de}

\author{Kristin Shaw}
\address{Kristin Shaw, University of Oslo, Department of Mathematics, Postboks 1053 Blindern 0316 Oslo, Norway}
\email{krisshaw@math.uio.no}

\author{Eugenii Shustin}
\address{Eugenii Shustin, School of Mathematical Sciences, Tel Aviv University, Ramat Aviv, Tel Aviv 69978, Israel}
\email {shustin@post.tau.ac.il}

\begin{abstract}
The family
of complex projective surfaces in $\PP^3$ of degree $d$ having precisely $\delta$ nodes
as their only singularities has codimension $\delta$ in the linear system $|{\mathcal O}_{\PP^3}(d)|$ for sufficiently large $d$
and is of degree $N_{\delta,\C}^{\PP^3}(d)=(4(d-1)^3)^\delta/\delta!+O(d^{3\delta-3})$. In particular, $N_{\delta,\C}^{\PP^3}(d)$ is polynomial in $d$.

By means of tropical geometry, we explicitly describe $(4d^3)^\delta/\delta!+O(d^{3\delta-1})$ surfaces passing through a suitable generic configuration of
$n=\binom{d+3}{3}-\delta-1$ points in $\PP^3$. These surfaces are close to tropical limits which we characterize combinatorially, introducing the concept of floor plans for multinodal tropical surfaces. The concept of floor plans is similar to the well-known floor diagrams (a combinatorial tool for tropical curve counts): with it, we keep the combinatorial essentials of a multinodal tropical surface $S$ which are sufficient to reconstruct $S$.

In the real case, we estimate the range for possible numbers of real multi-nodal surfaces
satisfying point conditions. 
We show that, for a special configuration $\bw$ of real points, 
the number $N_{\delta,\R}^{\PP^3}(d,\bw)$ of real surfaces of degree $d$ having $\delta$ real nodes and passing through $\bw$ is bounded from below by 
$\left(\frac{3}{2}d^3\right)^\delta/\delta!
+O(d^{3\delta-1})$.

We prove analogous statements for counts of multinodal surfaces in $\PP^1\times \PP^2$ and $\PP^1\times \PP^1\times \PP^1$.

\end{abstract}
\keywords{Tropical geometry, tropical singular surfaces, real singular surfaces, floor diagrams}
\subjclass[2010]{14T05, 51M20, 14N10}
\maketitle


\section{Introduction}


For $\delta<4(d-4)$, the family of surfaces of degree $d$ in $\PP^3$ having $\delta$ nodes as their only singularities is smooth of dimension
$n=\binom{d+3}{3}-1-\delta$ \cite{DPW,ST}, so it is natural to state the enumerative problems:
\begin{enumerate}\item[(C)] What is the degree of this family? Equivalently, what is the number $N_{\delta,\C}^{\PP^3}(d)$ of
surfaces $S$ of degree $d$ and with $\delta$ nodes passing through $n$ points in general position?
\item[(R)] What is the maximal possible number $N_{\delta,\R}^{\PP^3}(d,\bw)$ of real $\delta$-nodal surfaces of degree $d$
passing through a configuration $\bw$ of $n$ real points in general position?
\end{enumerate}
For $\delta=1$, the answer to question (C) is well-known: $N_{\delta,\C}^{\PP^3}(d)=4(d-1)^3$. Concerning question
(R) for $\delta=1$, we have proved in \cite{MMS15} the existence of a point configuration $\bw$ giving the lower bound $N_{\delta,\R}^{\PP^3}(d,\bw)\geq \frac{3}{2}d^3+O(d^2)$. To obtain this bound, we have used a tropical approach involving a lattice path algorithm.
The use of integration with respect to the Euler characteristic works only for odd-dimensional hypersurfaces (see an example of plane curves in Section
\ref{sec-uni}).

For arbitrary fixed $\delta$ and $d\gg\delta$, it is known that the answer to question (C) is polynomial in $d$, and the first coefficients of the polynomial are:
\begin{equation}N_{\delta,\C}^{\PP^3}(d)=\frac{(4(d-1)^3)^\delta}{\delta!}+O(d^{3\delta-3})\ .
\label{emn1}\end{equation}
This results from the standard computation of the class of the incidence variety, while $O(d^{3(\delta-1)})$
designates parasitic terms generated by the diagonals. 
On the other hand, there has yet to be any general approach to 
question (R).
In this paper, we address question (R), giving a lower bound similar to the case of $\delta=1$.

Our approach to this question is via tropical geometry, where instead we count tropicalizations of $\delta$-nodal surfaces with an appropriate multiplicity reflecting the number of (complex or real) $\delta$-nodal surfaces of degree $d$ that interpolate the given points and tropicalize to a particular tropical surface in the count.
In the complex case, we let
\begin{equation} N_{\delta,\C}^{\PP^3,\trop}(d)= \sum_S \mult_{\C}(S),\label{eq-corres}\end{equation}
where the sum goes over all tropicalizations $S$ of $\delta$-nodal surfaces of degree $d$ satisfying the point conditions, and $\mult_{\C}(S)$ accounts for the number of such surfaces which tropicalize to $S$. Then, the correspondence
$$N_{\delta,\C}^{\PP^3}(d)=N_{\delta,\C}^{\PP^3,\trop}(d)$$
holds by definition. Here, we work over the field of complex Puiseux series to employ tropicalization.

Unfortunately, the combinatorics of the tropicalizations $S$ of $\delta$-nodal surfaces can be quite intricate, and the task to enumerate them precisely is difficult.
Similar to how floor diagrams for tropical curves can be used to manage counts of tropical curves \cite{BM08, FM09}, we introduce floor plans for tropical surfaces suitable for our counting problem.
They give us a tool to combinatorially characterize a subset of the tropical surfaces $S$ contributing to the above sum, for a special choice of point configuration $\bw$ (containing $n=\binom{d+3}{3}-\delta-1$ points). Viewed asymptotically, the subset in question is large, and suffices to reproduce the leading coefficient when we count floor plans. One can think about the subset as the set of tropical surfaces for which the tropicalizations of the $\delta$ nodes are sufficiently far apart.

More precisely, we define the number of complex floor plans of degree $d$ with $\delta$ nodes to be
$$ N_{\delta,\C}^{\PP^3,\floor}(d)= \sum_F \mult_{\C}(F),$$
where the sum goes over all floor plans $F$ (see Definition \ref{def-floorplan}), each floor plan corresponds to a unique tropical surface $S$ contributing to the sum in (\ref{eq-corres}), and $\mult_{\C}(F)=\mult_{\C}(S)$.

Since the tropical surfaces arising from floor plans form only a subset of the surfaces $S$ contributing to the sum in (\ref{eq-corres}), we have
\begin{equation} N_{\delta,\C}^{\PP^3,\floor}(d)\leq N_{\delta,\C}^{\PP^3,\trop}(d)\label{eq-floortrop}.\end{equation}

Our first main result (see Theorem \ref{thm-floorplanasymptoticP3}) considers the asymptotics of the number $ N_{\delta,\C}^{\PP^3,\floor}(d)$: we have $N_{\delta,\C}^{\PP^3,\floor}(d)= (4d^3)^\delta/\delta!+O(d^{3\delta-1})$.
Thus, counting floor plans, we can recover the leading coefficient of the polynomial $N_{\delta,\C}^{\PP^3,\trop}(d)=N_{\delta,\C}^{\PP^3}(d)$, in spite of the fact that (\ref{eq-floortrop}) gives an inequality only.

In order to count floor plans, we have to determine the multiplicities $\mult_{\C}(S_j)$ of the corresponding tropical surfaces $S_j$. We use patchworking techniques to do this, in particular we have to solve local lifting equations involving the initials of the coefficients of a surface tropicalizing to $S$, and its singular points \cite{IMS09,MMS15}.

Having the combinatorial tool of floor plans at hand, we turn our attention to the case of real multinodal surfaces.
Pick a real point configuration $\bw$ (i.e.\ over the field of real Puiseux series) and let
\begin{equation} N_{\delta,\R}^{\PP^3,\trop}(d,\bw)= \sum_S \mult_{\R,\bw}(S),\label{eq-corresreal}\end{equation}
where now the sum goes over all tropicalizations of real $\delta$-nodal surfaces of degree $d$ passing through $\bw$, and $\mult_{\R,\bw}(S)$ accounts for the number of such surfaces which tropicalize to $S$. Again, the correspondence
$$N_{\delta,\R}^{\PP^3}(d,\bw)=N_{\delta,\R}^{\PP^3,\trop}(d,\bw)$$
holds by definition.

We adapt the count of floor plans to the real situation by setting $ N_{\delta,\R,s}^{\PP^3,\floor}(d)$ to be the number of real floor plans of degree $d$ with $\delta$ nodes,
$$ N_{\delta,\R,s}^{\PP^3,\floor}(d)= \sum_F \mult_{\R,s}(F),$$
where the sum goes over the same floor plans $F$ as above, each floor plan corresponds to a unique tropical surface $S$ in (\ref{eq-corresreal}), and $\mult_{\R,s}(F)=\mult_{\R,\bw}(S)$. Here, $s$ stands for a vector consisting of three signs for each point $p_i$ in $\bw$. We choose $s=((+)^3)^n$. To relate the count of floor plans to the count of tropical surfaces, as before we pick $\bw$ in horizontally stretched position, so it is only the sign vector $s$ which governs different lifting behaviour.

We determine $\mult_{\R,s}(F)=\mult_{\R,\bw}(S)$ with the same patchworking techniques as before, only now we search for  real solutions for the initials of the coefficients of a surface tropicalizing to $S$. From the lifting equations we set up, it turns out that along with the coefficients, the singular points in question are real. For some tropical surfaces, it cannot be decided whether the lifting equations have real or imaginary solutions. As a consequence, we disregard the floor plans corresponding to those in our count of floor plans, resp.\ we set $\mult_{\R,s}(F)=0$.

Thus, in the real setting, the subset of tropical surfaces $S$ arising from our floor plans 
which contribute to our count is  smaller than in the complex case: in addition to tropical surfaces for which the singularities are not sufficiently far apart, we exclude tropical surfaces for which the real multiplicity is unknown.
Nevertheless, we have
\begin{equation} N_{\delta,\R,s}^{\PP^3,\floor}(d)\leq N_{\delta,\R,\bw}^{\PP^3,\trop}(d,\bw)\label{eq-floortropreal}\end{equation}
for our special choice of $\bw$, and where $s=((+)^3)^n$ denotes the vector of signs of the points $\bw$.

Our second main result (see Theorem \ref{thm-realfloorplanasymptoticP3positive}) considers the asymptotics of the number $ N_{\delta,\R,s}^{\PP^3,\floor}(d)$: we have
$ N_{\delta,\R,s}^{\PP^3,\floor}(d) =\left(\frac{3}{2}d^3\right)^\delta/\delta!+O(d^{3\delta-1})$. Here, all the signs in $s$ are chosen positive.

Combining this with the inequality (\ref{eq-floortropreal}) and the correspondence (\ref{eq-corresreal}), we obtain:

\begin{theorem}\label{thm-realsurfacesconstructive}
For any given $\delta\ge1$ and any $d\gg\delta$ there exists a configuration $\bw$ of $n$ real points
in $\PP^3$ such that there are at least
$\frac{1}{\delta!}(\frac{3}{2} d^3)^\delta+O(d^{3\delta-1})$ real $\delta$-nodal surfaces of degree $d$ passing through $\bw$, whose singular points are all real.
\end{theorem}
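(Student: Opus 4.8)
The plan is to assemble the statement from the three ingredients developed in the body of the paper: the (tautological) correspondence between the algebraic and tropical counts, the floor-plan inequality (\ref{eq-floortropreal}), and the asymptotic count of real floor plans (Theorem \ref{thm-realfloorplanasymptoticP3positive}). First I would fix the point configuration exactly as prescribed above: take $\bw$ to be a configuration of $n=\binom{d+3}{3}-\delta-1$ real points of $\PP^3$ (over the real Puiseux series) in horizontally stretched position, and attach to each point the sign triple $(+,+,+)$, so that the total sign vector is $s=((+)^3)^n$. For this $\bw$ the correspondence $N_{\delta,\R}^{\PP^3}(d,\bw)=N_{\delta,\R}^{\PP^3,\trop}(d,\bw)=\sum_S\mult_{\R,\bw}(S)$ holds by the very definition of the right-hand side, the sum running over all tropical limits $S$ of real $\delta$-nodal surfaces of degree $d$ through $\bw$.

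Next I would feed in the floor-plan machinery. Each floor plan $F$ of degree $d$ with $\delta$ nodes singles out a unique tropical surface $S$ occurring in that sum, with $\mult_{\R,s}(F)=\mult_{\R,\bw}(S)$; since the tropical surfaces arising from floor plans form only a subset of all tropical limits, and since by convention $\mult_{\R,s}(F)=0$ for the floor plans whose local lifting equations cannot be certified to possess real solutions, summing over $F$ yields the inequality $N_{\delta,\R,s}^{\PP^3,\floor}(d)\le N_{\delta,\R,\bw}^{\PP^3,\trop}(d,\bw)$ of (\ref{eq-floortropreal}). Combining this with Theorem \ref{thm-realfloorplanasymptoticP3positive}, which gives $N_{\delta,\R,s}^{\PP^3,\floor}(d)=\left(\tfrac{3}{2}d^3\right)^\delta/\delta!+O(d^{3\delta-1})$, and with the correspondence of the previous paragraph, produces $N_{\delta,\R}^{\PP^3}(d,\bw)\ge\left(\tfrac{3}{2}d^3\right)^\delta/\delta!+O(d^{3\delta-1})$, which is the asserted lower bound. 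The reality of the singular points is not a separate condition to verify afterwards: in the patchworking computation of $\mult_{\R,s}(F)$ the coordinates of the $\delta$ nodes appear among the unknowns of the same lifting equations as the initial coefficients of the surface, so any surface counted with positive real multiplicity automatically has all $\delta$ of its nodes real.

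The serious work lies not in this final bookkeeping but inside the cited result, so the main obstacle is Theorem \ref{thm-realfloorplanasymptoticP3positive} itself. Proving it requires (i) showing that floor plans indeed parametrize a combinatorially manageable subfamily of the tropical limits, with a clean dictionary $F\leftrightarrow S$; (ii) setting up the local lifting (patchworking) equations at each floor and at each node and solving them over the real Puiseux series for the sign pattern $s=((+)^3)^n$, pinning down precisely which local types force real solutions and with which real multiplicity, and where the real/imaginary alternative genuinely cannot be resolved; and (iii) performing the asymptotic enumeration, checking that the discarded floor plans — those with undetermined real multiplicity and those whose nodes are not sufficiently far apart — contribute only $O(d^{3\delta-1})$, so that the leading term $\left(\tfrac{3}{2}d^3\right)^\delta/\delta!$ is unaffected. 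A secondary technical point is to confirm that the horizontally stretched position is simultaneously generic enough for the tropical correspondence to hold and special enough that the floor-plan decomposition controls the count; this is where the choice of $\bw$, rather than an arbitrary generic real configuration, is essential.
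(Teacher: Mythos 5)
Your proposal is correct and follows exactly the paper's own route: the theorem is obtained by combining the tautological correspondence (\ref{eq-corresreal}), the floor-plan inequality (\ref{eq-floortropreal}) coming from Proposition \ref{prop-floorplandegdreallifts}, and the asymptotic count of Theorem \ref{thm-realfloorplanasymptoticP3positive}, for the stretched real configuration with sign vector $s=((+)^3)^n$; the reality of the nodes is likewise built into the lifting equations, as you note. The paper treats this final assembly as immediate and places the substantive work in the cited results, just as you describe.
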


The fact that the complex count of floor plans produces the expected leading coefficient can be viewed as evidence that this estimate is reasonable, in any case we obtain the same leading exponent as for the complex count of surfaces in $\PP^3$, which is the highest possible.

We establish analogous results for asymptotic counts of surfaces in $\PP^1\times \PP^2$ and $\PP^1\times \PP^1\times \PP^1$
(see Theorems \ref{thm-realfloorplanasymptoticP2P1} and \ref{thm-realfloorplanasymptoticP1P1P1}
in Section \ref{sec-asympt}).

The contents of the paper are organized as follows. In Section \ref{sec-uni}, we highlight some special constructions of pencils containing a large number of  real curves and hypersurfaces in the uninodal case. We begin the tropical approach in Section \ref{sec-floorplanecurves}, where we review the notion of  floor decomposition for curves and also outline a tropical proof, due to Beno\^it Bertrand, of the existence of  pencils of real curves containing the maximal number of real solutions in $\PP^2$ and $\PP^1 \times \PP^1$.  In Section \ref{nsec3}, we define  floor plans of  singular surfaces and define their real and complex multiplicities. We also describe in this section  how these multiplicities relate to the real and complex multiplicities of the corresponding singular tropical surfaces.
Finally in Section \ref{sec-asympt}, we asymptotically enumerate singular multinodal surfaces in $\PP^3, \PP^2 \times \PP^1$ and $\PP^1 \times \PP^1 \times \PP^1$ over $\C$ and for a specific point configuration over  $\R$.

\subsection{Acknowledgements}
The fourth author was supported by the Israeli Science Foundation grants no. 176/15 and 501/18 and by the Bauer-Neuman Chair in Real and Complex Geometry.
The first and second author acknowledge partial support by the DFG-collaborative research center TRR 195 (INST 248/235-1).  The research of the third author  is supported by the Bergen Research Foundation project ``Algebraic and topological cycles in complex and tropical geometry". We would like to thank Madeline Brandt, Beno\^it Bertrand, Alheydis Geiger, Dmitry Kerner, Viatcheslav Kharlamov and Uriel Sinichkin for useful discussions and important remarks. Part of this project was carried out during the 2018 program on \emph{Tropical Geometry, Amoebas and Polytopes} at the Institute Mittag-Leffler in Stockholm (Sweden).  The authors would like to thank the institute for hospitality, and for providing excellent working conditions.

\section{Enumeration of real uninodal curves resp.\ hypersurfaces: special constructions}\label{sec-uni}

Here, we discuss special pencils of real curves resp.\ hypersurfaces containing a relatively large number of real singular
curves resp.\ hypersurfaces (which sometimes is larger than what we get
via tropical geometry). However, we point out that none of these constructions extends to the multi-nodal case, while the
tropical construction of uninodal curves resp.\ hypersurfaces extends to the multi-nodal case in a natural way.

\subsection{Real uninodal curves}
An elementary approach to enumerate real singular curves in a generic pencil is based on integration with
respect to the Euler characteristic \cite{Vir88}.
Consider a pencil spanned by two real smooth curves of degree $d$
intersecting in $d^2$ distinct real points. The pencil defines a projection of $\R P^2_{d^2}$ (the real projective plane blown up at $d^2$ real points) onto $\R P^1$. Integration with respect to the Euler characteristic  yields
$$\chi(\R P^2_{d^2})=\int_{\R P^1}\chi(\R C_t)d\chi(t)\quad\Longrightarrow\quad n_+-n_-=d^2-1\ ,$$ where $n_+$, resp. $n_-$ is the number of real curves with a real hyperbolic, resp. elliptic node.
In general, one can guarantee only that at least $d^2-1$ singular curves in such a pencil are real among $3(d-1)^2$ complex singular curves.

However, one can do much better. The following statement has been suggested to the authors by V. Kharlamov.

\begin{lemma}\label{l-uni}
For any $d\ge2$, there exists a pencil of plane curves of degree $d$ that contains $3(d-1)^2$ real uninodal curves and no other singular curves.
\end{lemma}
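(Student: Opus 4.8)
The plan is to build such a pencil by induction on $d$, using a degeneration to a reducible curve. Since $3(d-1)^2$ is the degree of the discriminant hypersurface $\Delta$ in $|{\mathcal O}_{\PP^2}(d)|$, a pencil whose corresponding line meets $\Delta$ transversally in simple points has exactly $3(d-1)^2$ singular members, every one uninodal, and no further singular members; so it suffices to produce one such \emph{generic} pencil that is defined over $\R$ and has all $3(d-1)^2$ of these members real. For $d=2$ one takes the pencil of conics through four real points in general position: its three singular members are the three real line pairs joining the points, and the pencil is generic; this starts the induction (the case $d=1$ being vacuous).

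For the inductive step, assume $\{C_t=t_0A+t_1B\}_{t\in\PP^1}$ is a generic real pencil of degree $d-1$ all of whose $3(d-2)^2$ singular members are real and uninodal. Fix a real line $\ell$ and real curves $D_1,D_2$ of degree $d$ (constrained below), and consider the pencil $\mathcal P_\epsilon$ spanned by $\ell A+\epsilon D_1$ and $\ell B+\epsilon D_2$, whose members are $\ell\cdot C_t+\epsilon D_t$ with $D_t=t_0D_1+t_1D_2$ and $0<|\epsilon|\ll1$. As $\epsilon\to0$ every member specializes to $\ell\cdot C_t$, and analysing the system $F=F_x=F_y=0$ for $F=\ell C_t+\epsilon D_t$ shows that a singular point of a member of $\mathcal P_\epsilon$ can only accumulate on $\{C_t=0\}$. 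A finer analysis exhibits the $3(d-1)^2$ singular members of $\mathcal P_\epsilon$ as three families, whose members tend respectively to the curves $\ell\cdot C_{t^{*}}$ for: (a) the $3(d-2)^2$ values $t^{*}$ with $C_{t^{*}}$ singular, each giving one singular member carrying a single node near that of $C_{t^{*}}$; (b) the $2d-1$ values $t^{*}$ for which $D_{t^{*}}$ vanishes at a point of $\ell\cap C_{t^{*}}$, one member each; (c) the $2d-4$ values $t^{*}$ for which $\ell$ is tangent to $C_{t^{*}}$, the tacnode of $\ell\cdot C_{t^{*}}$ splitting into two nearby singular members. One checks $3(d-2)^2+(2d-1)+2(2d-4)=3(d-1)^2$, that each member is uninodal, and that for generic data $\mathcal P_\epsilon$ is transverse to $\Delta$, so there are no other singular members.

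It remains to make all $3(d-1)^2$ members real by a judicious choice of $\ell,D_1,D_2$. Family (a) is automatically real, each member being a small deformation of $C_{t^{*}}$ near its real node. The members of families (b) and (c) are governed by the single binary form $R:=A|_{\ell}\,D_2|_{\ell}-B|_{\ell}\,D_1|_{\ell}$ of degree $2d-1$: a member of family (b) corresponds to a zero of $R$ and is real iff that zero is real, while the reality of the two members of family (c) attached to a tangency point $p^{*}$ is governed by the sign of $R(p^{*})$ (up to a factor depending only on the base pencil and $\ell$) together with the sign of $\epsilon$. Since $A|_{\ell}$ and $B|_{\ell}$ are coprime, the map $(D_1|_{\ell},D_2|_{\ell})\mapsto R$ is onto the space of binary forms of degree $2d-1$, so I would take $R=\prod_{i=1}^{2d-1}(v-r_iu)$ with distinct real $r_i$ placed so that $R$ has the prescribed sign at each tangency point — possible since there are only $2d-4$ sign conditions and $2d-1$ roots to distribute — and then recover $D_1,D_2$ (generic in every other respect) with this restriction to $\ell$; with the matching sign of $\epsilon$, all of families (b) and (c) become real.

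The main obstacle is to arrange that the $2d-4$ parameters $t^{*}$ of family (c) are themselves real, i.e.\ that $\ell$ is tangent to $C_{t^{*}}$ for $2d-4$ \emph{real} values of $t$; otherwise two members of family (c) sit near a non-real curve $\ell\cdot C_{t^{*}}$ and cannot be real. These parameters are the roots of $\mathrm{disc}_{s}\big(t_0A|_{\ell}(s)+t_1B|_{\ell}(s)\big)$, a binary form of degree $2d-4$, so one needs a real line $\ell$ for which the pencil $\{C_t|_{\ell}\}$ of binary $(d-1)$-forms has all its discriminant roots real; this holds, for example, when $\{C_t|_{\ell}\}$ is a hyperbolic pencil (all members real-rooted with interlacing roots), which one should be able to secure by choosing $\ell$ suitably relative to the inductively constructed base pencil. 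The remaining points — the local deformation analysis of the tacnode in (c), the verification that the constrained $\mathcal P_\epsilon$ is still transverse to $\Delta$, and the bookkeeping of which node each member acquires — are routine.
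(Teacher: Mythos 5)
Your enumerative bookkeeping is consistent (indeed $3(d-2)^2+(2d-1)+2(2d-4)=3(d-1)^2$), but the step you yourself flag as the main obstacle is a genuine gap, and the remedy you sketch would fail. For the $2(2d-4)$ members of family (c) to have a chance of being real, the $2d-4$ parameters $t^*$ at which $\ell$ is tangent to $C_{t^*}$ must be real (with real tangency points), i.e.\ all roots of $\mathrm{disc}_s\bigl(t_0A|_\ell(s)+t_1B|_\ell(s)\bigr)$ must be real. Your proposed sufficient condition --- that $\{C_t|_\ell\}$ be a hyperbolic pencil with interlacing roots --- achieves exactly the opposite: by Hermite--Kakeya--Obreschkoff, (strict) interlacing of $A|_\ell$ and $B|_\ell$ means every real member $t_0A|_\ell+t_1B|_\ell$ has $d-1$ distinct real roots (the Wronskian $A|_\ell' B|_\ell-A|_\ell B|_\ell'$ has no real zeros), so no real $t$ gives a multiple root and the discriminant has \emph{no} real roots at all. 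You would get $0$ real tangencies instead of $2d-4$, and the corresponding family-(c) members would pair off into complex conjugates near non-real curves $\ell\cdot C_{t^*}$, destroying the count. What you actually need is the opposite regime, where the roots of $C_t|_\ell$ collide $2d-4$ times at real points as $t$ runs over $\R\PP^1$; and this must be arranged for the specific pencil handed to you by the induction, so either the inductive hypothesis has to be strengthened (carrying along a distinguished line $\ell$ with $2d-4$ real tangent members, plus whatever sign data families (b) and (c) require), or you must prove that every pencil satisfying the hypothesis admits such a line. Neither is done. In addition, the claim that the reality of the two family-(c) members splitting off a tacnode is governed by the sign of $R(p^*)$ and of $\epsilon$ is only asserted (it requires the local $A_3$-deformation computation), and realizing a prescribed sign pattern at the $2d-4$ tangency points by an odd-degree form $R$ with all $2d-1$ roots real on $\R\PP^1$ also needs a parity check.

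For comparison, the paper's proof takes a completely different, non-inductive route that sidesteps precisely this reality issue: it spans the pencil by small perturbations $C,C'$ of two products of $d$ real lines whose mutual and self intersections are confined to three disjoint discs, and counts real critical points of $\varphi=C'/C$ topologically --- at least one in each of the $\frac{(d-1)(d-2)}{2}$ inner regions of each of the two ``self-intersection'' discs, a saddle in each of the $(d-1)^2$ quadrangles of the third disc, and the $2\cdot\frac{d(d-1)}{2}$ saddles into which the nodes of the two reducible generators deform --- giving $3(d-1)^2$ real uninodal members directly. If you want to keep your degeneration strategy, the missing ingredient is an actual construction (or inductive propagation) of a line $\ell$ realizing $2d-4$ real tangent members of the base pencil, which is exactly the kind of delicate real-root-location statement your current argument leaves unproved.
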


\begin{proof}
Let $L_1,...,L_d$ and $L'_1,...,L'_d$ be two collections of real lines in general position in $\PP^2$, and let
$K_1,K_2,K_3\subset\R P^2$ be disjoint closed discs such that all $\frac{d(d-1)}{2}$ intersection points $L_i\cap L_j$ lie in
$\Int(K_1)$, all $\frac{d(d-1)}{2}$ intersection points $L'_i\cap L'_j$ lie in $\Int(K_2)$, and all $d^2$ intersection points $L_i\cap L'_j$ lie in
$\Int(K_3)$. Now consider the real pencil ${\mathcal P}=\{\lambda L_1\cdot...\cdot L_d+\mu L'_1\cdot...\cdot L'_d\ |\ (\lambda:\mu)\in\PP^1\}$.
The rational function on the plane $$\varphi=\frac{L'_1\cdot...\cdot L'_d}{L_1\cdot...\cdot L_d}$$
vanishes along the lines $L'_1,...,L'_d$ in $K_2$, and hence has at least one real critical point in each of the $\frac{(d-1)(d-2)}{2}$
components of $K_2\setminus(L'_1\cdot...\cdot L'_d)$ disjoint from $\partial K_2$. Similarly, $\varphi$ (or, equivalently, $\varphi^{-1}$) has
at least one real critical point in each of the $\frac{(d-1)(d-2)}{2}$ components of $K_1\setminus(L_1\cdot...\cdot L_d)$ disjoint from $\partial K_1$.
In $K_3$, we have $(d-1)^2$ quadrangular components of the complement
$K_3\setminus(L_1\cdot...\cdot L_d\cdot L'_1\cdot...\cdot L'_d)$ disjoint from $\partial K_3$,
and in each of them $\varphi$ has at least one real critical point:
indeed, $\varphi$ vanishes along two opposite sides and takes infinite value along the other opposite
sides of each quadrangular component, and hence has at least one saddle point inside. Then we take generic real curves $C$, $C'$ of degree $d$ sufficiently close
to $L_1\cdot...\cdot L_d$ and $L'_1\cdot...\cdot L'_d$, respectively, and such that the intersection points $L_i\cap L_j$ and $L'_i\cap L'_j$,
$1\le i<j\le d$, all turn into saddle points of the rational function $\widetilde\varphi=\frac{C'}{C}$ with different critical values. Then, in the
pencil $\widetilde{\mathcal P}=\{\lambda C+\mu C'\ |\ (\lambda:\mu)\in\PP^1\}$, we observe
$$2\cdot\frac{(d-1)(d-2)}{2}+(d-1)^2+2\cdot\frac{d(d-1)}{2}=3(d-1)^2$$ real singular curves.
\end{proof}

It is also possible to use tropical geometry to find a pencil of plane curves containing $3(d-1)^2$ real uninodal curves. This approach is suggested by B. Bertrand \cite{Bertrand} and is outlined after we introduce the notion of floor decomposed curves in Section \ref{sec-floorplanecurves}.

\subsection{Real uninodal hypersurfaces}
In the higher-dimensional case we demonstrate an elementary construction of a pencil of real hypersurfaces that contains
the number of real singular hypersirfaces asymptotically comparable with the degree of the discriminant.

\begin{lemma}\label{l-unih}
For any $n\ge 3$ and $d\ge2$, there exists a pencil of real hypersurfaces of degree $d$ in $\PP^n$ that contains at least $2(d-1)^n$ real singular hypersurfaces.
\end{lemma}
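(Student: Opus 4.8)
The plan is to prove a slightly stronger statement by induction on the dimension, the extra strength being exactly what makes the induction self-sustaining: for every $m\ge 1$ there are real polynomials $A_m,B_m$ of degree $d$ in affine coordinates $x_1,\dots,x_m$ and a number $\rho_m>0$ such that for all real $\delta_1,\delta_2$ with $|\delta_i|\le\rho_m$ the pencil $\langle\,\delta_1+A_m\,,\,\delta_2+B_m\,\rangle$ of degree‑$d$ hypersurfaces in $\PP^m$ (homogenize each $\delta_i$ by $x_0^d$) has at least $2(d-1)^m$ real singular members; for $m=n$ and $\delta_1=\delta_2=0$ this is the lemma. Throughout we use that if $p$ is a critical point of $\varphi=G/F$ with finite nonzero value $t=\varphi(p)$, then $G-tF$ is a singular member of $\langle F,G\rangle$ (singular at the real point $p$), and that distinct values $t$ give distinct members. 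So it suffices to exhibit, after a harmless generic perturbation of all auxiliary data, $2(d-1)^m$ critical points of $\varphi$ lying off the zero and pole loci of $\varphi$ and with pairwise distinct critical values.

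\emph{Base case $m=1$.} Take $A_1=\prod_{i=1}^d(x_1-a_i)$, $B_1=\prod_{j=1}^d(x_1-b_j)$ with $a_1<\dots<a_d<b_1<\dots<b_d$ (chosen generically) and $\rho_1$ small. For $|\delta_i|\le\rho_1$ the polynomial $\delta_1+A_1$ has $d$ simple real roots near the $a_i$ and $\delta_2+B_1$ has $d$ simple real roots near the $b_j$, these two blocks of roots still being separated. On each of the $d-1$ intervals between consecutive roots of $\delta_1+A_1$ the function $\varphi=(\delta_2+B_1)/(\delta_1+A_1)$ keeps a constant sign and tends to infinity of that sign at both endpoints, hence has an interior critical point; on each of the $d-1$ intervals between consecutive roots of $\delta_2+B_1$ the function $\varphi$ keeps a constant sign and vanishes at both endpoints, hence again has an interior critical point. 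These $2(d-1)$ critical points are distinct, and since the numerator of $\varphi'$ has degree at most $2d-2$ they are all of them; for generic $a_i,b_j$ their critical values stay distinct for every admissible $(\delta_1,\delta_2)$.

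\emph{Inductive step $m-1\Rightarrow m$.} Choose a univariate degree‑$d$ polynomial $\pi_m$ with $d-1$ distinct real critical points $\gamma_1,\dots,\gamma_{d-1}$ all of whose critical values lie in $[-\tfrac12\rho_{m-1},\tfrac12\rho_{m-1}]$ (a small multiple of a Chebyshev‑type polynomial works), put $\rho_m=\tfrac12\rho_{m-1}$, and set
\[
A_m(x_1,\dots,x_m)=\pi_m(x_1)+B_{m-1}(x_2,\dots,x_m),\qquad
B_m(x_1,\dots,x_m)=A_{m-1}(x_2,\dots,x_m).
\]
For $|\delta_i|\le\rho_m$ write $\varphi=\dfrac{\delta_2+A_{m-1}}{\delta_1+\pi_m(x_1)+B_{m-1}}$. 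Its numerator does not involve $x_1$ and its denominator involves $x_1$ only through $\pi_m(x_1)$, so away from the zero and pole loci a critical point of $\varphi$ must satisfy $\pi_m'(x_1)=0$, i.e.\ $x_1=\gamma_k$ for some $k$. With $x_1=\gamma_k$ fixed, the equations $\partial_{x_j}\varphi=0$ $(j\ge 2)$ say precisely that $(x_2,\dots,x_m)$ is a critical point of the pencil $\bigl\langle\,\delta_2+A_{m-1}\,,\,(\delta_1+\pi_m(\gamma_k))+B_{m-1}\,\bigr\rangle$ in $\PP^{m-1}$; here the two additive constants have absolute value at most $\rho_m\le\rho_{m-1}$ and $\rho_m+\tfrac12\rho_{m-1}=\rho_{m-1}$ respectively, so the inductive hypothesis yields at least $2(d-1)^{m-1}$ real singular members — equivalently that many real critical points — for each of these $d-1$ pencils. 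Summing over $k$ gives at least $(d-1)\cdot 2(d-1)^{m-1}=2(d-1)^m$ real critical points of $\varphi$, and a final generic choice of the $a_i,b_j$ and of the scalars shrinking the $\pi_\ell$ separates all the critical values involved, completing the induction.

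The genuine work, and the only real difficulty, is the bookkeeping in the inductive step: one must push the additive constants down the recursion while keeping them inside the geometrically shrinking admissible intervals $[-\rho_\ell,\rho_\ell]$ — which is why the strengthened statement carries two free constants rather than none — and one must run the standard but slightly delicate transversality argument ensuring that the $2(d-1)^n$ critical points produced have pairwise distinct critical values (so that they cut out $2(d-1)^n$ distinct real singular hypersurfaces) and that none of them lies on the base locus or on either of the two special members $V(A_n),V(B_n)$. Everything else is a direct differentiation.
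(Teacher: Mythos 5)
Your construction is sound and it is genuinely different from the paper's. The paper takes the single split polynomial $F=\sum_{i=1}^n\prod_{j=1}^d(x_i-\xi_{ij})$, which has $(d-1)^n$ nondegenerate real critical points with distinct critical values in a small disc, sets $G(x)=F(x+t(1,\dots,1))$, and shows that for $t\gg0$ the critical-point system of $\varphi=F/G$ is an $O(t^{-1})$ perturbation of $\nabla F=0$, yielding $(d-1)^n$ real critical points near each of the two far-apart discs, hence $2(d-1)^n$ in total. You instead run an induction on the dimension: the pair $(A_m,B_m)=(\pi_m(x_1)+B_{m-1},A_{m-1})$ makes the critical-point equations split \emph{exactly} ($x_1$ pinned at the critical points of $\pi_m$, the remaining equations reproducing the $(m-1)$-dimensional pencil with shifted constants), the factor $2$ coming from the base case $m=1$ rather than from two discs, and the extra additive constants $\delta_1,\delta_2$ being exactly what closes the recursion. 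This buys you an argument with no asymptotic perturbation step, at the price of the $\rho_m$-bookkeeping; your count of $2(d-1)^n$ real critical points of $\varphi$ off its zero and pole loci is correct. One phrasing caveat: the inductive statement should be the critical-point statement you announce (``$2(d-1)^m$ real critical points off the zero and pole loci, with distinct critical values''), not the statement about singular members --- a real singular member need not produce a real affine critical point, so the ``equivalently'' in your inductive step is not an equivalence, although your construction does in fact deliver critical points at every level.

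The one genuinely weak step is the passage from $2(d-1)^n$ critical points to $2(d-1)^n$ \emph{distinct} singular hypersurfaces, i.e.\ the separation of critical values across the different $\gamma_k$. As written it can fail: a small multiple of an actual Chebyshev polynomial has all critical values equal to $\pm\eps$, so for any two $\gamma_k,\gamma_{k'}$ with $\pi_m(\gamma_k)=\pi_m(\gamma_{k'})$ the sub-pencils $\psi_k$ and $\psi_{k'}$ are \emph{identical}; their critical points give distinct critical points of $\varphi$ but with pairwise equal critical values, so the corresponding members of the pencil coincide and the member count drops by roughly a factor of $2$. Moreover, the genericity parameters you invoke at the end (the $a_i,b_j$ and the scalars shrinking the $\pi_\ell$) cannot repair this, since rescaling $\pi_\ell$ preserves the coincidence $\pi_\ell(\gamma_k)=\pi_\ell(\gamma_{k'})$ and varying $a_i,b_j$ perturbs the two identical functions identically. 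The fix is easy but should be said: choose each $\pi_\ell$ with pairwise distinct (still small) critical values, and then argue --- e.g.\ by noting the critical points are nondegenerate (the Hessian of $\varphi$ at $(\gamma_k,y)$ is block-triangular with $\pi_\ell''(\gamma_k)\neq0$ in the corner), so they persist under small perturbations of $\pi_\ell$, and the critical values of $\psi_k$ vary nonconstantly with the additive constant $c_k=\delta_1+\pi_\ell(\gamma_k)$ --- that a generic such choice separates all $2(d-1)^n$ critical values for the one pencil you actually need ($\delta_1=\delta_2=0$). Note that your strengthened statement, read as a statement about members for \emph{all} admissible $(\delta_1,\delta_2)$, would require this separation uniformly in the box, which is another reason to phrase the induction in terms of critical points and impose distinctness of values only at the top level. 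The paper's construction avoids this issue structurally, since its critical points sit near critical points of $F$ that already have distinct critical values and the perturbation is small.
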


\begin{proof}
Take the real polynomial
$$F(x_1,...,x_n)=\sum_{i=1}^n\prod_{j=1}^d(x_i-\xi_{ij}),\quad\xi_{ij}\in\R\ .$$ For suitable values of $\xi_{ij}$,
it has $(d-1)^n$ real critical points in a small disc $K_1\subset\R^n$ with different critical values. 
Consider the polynomial
$$G(x_1,...,x_n)=F(x_1+t,...,x_n+t)\ ,$$ where $t\in \R$ will be specified later. We claim that, for $t\gg0$, the rational function
$\varphi=\frac{F}{G}$ has at least $(d-1)^n$ real critical points in the disc $K_1$ and at least $(d-1)^n$ real critical points in the shifted disc
$K_2=K_1-t(1,...,1)$. Indeed, in the disc $K_1$ the critical point system reads
\begin{equation}F_{x_i}=F\cdot\frac{G_{x_i}}{G},\quad i=1,...,n\ .
\label{e-uni}\end{equation} Note that the factors $\frac{G_{x_i}}{G}$ in the right-hand sides in all the equations are of order $O(t^{-1})$. Hence, for
a sufficiently large $t>0$, we obtain (at least) $(d-1)^n$ real  solutions to the system (\ref{e-uni}) in the disc $K_1$. Similarly, in
$K_2$ we get at least $(d-1)^n$ real critical points. \end{proof}

%

\section{Preliminaries for the tropical approach}
 We work over the field of complex Puiseux series
$\K=\bigcup_{m\ge1}\C\{t^{1/m}\}$, which is an algebraically closed field of characteristic zero with a non-Archimedean valuation denoted by $\val$.
Tropicalization, denoted by $\trop$, is coordinate-wise valuation.
The field $\K$ possesses the natural complex conjugation involution and contains the complete real subfield $\K_\R$ of real Puiseux series.

We assume that the reader is familiar with tropical hypersurfaces and their dual Newton subdivisions. When we speak about the area (resp.\ volume) of polygons (resp.\ polytopes) in a Newton subdivision, we mean normalized area (resp.\ volume).
A tropical surface in $\R^3$ is of degree $d$ if it is dual to the polytope $\conv\{(0,0,0),(0,0,d),(0,d,0),(d,0,0)\}$.
A tropical surface is of bidegree $(d,e)$ if it is dual to the polytope $\conv\{(0,0,0),(0,0,d),(0,d,0),(e,0,0), (e,0,d),(e,d,0)\}$.
A tropical surface is of tridegree $(d,e,f)$ if it is dual to the polytope \begin{equation}\conv\{(0,0,0),(0,d,0),(0,0,e),(0,d,e),(f,0,0),
(f,d,0),(f,0,e),(f,d,e)\}\label{nepolytope}\end{equation}
(see Figure \ref{fig-polytopes}).

 \begin{figure}
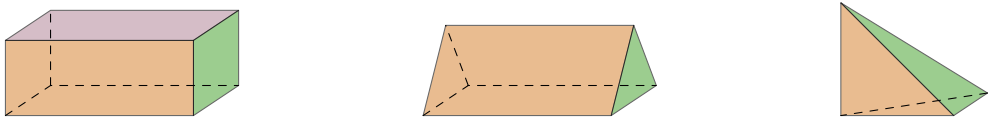

    \hfill\quader\hfill\toblerone\hfill\simplex\hspace*{1cm}
    \caption{Polytopes dual to the threefolds in which we count surfaces.}\label{fig-polytopes}
 \end{figure}

A \emph{lift} of a tropical surface $S$ is a surface in $\K^3$ tropicalizing to $S$. We call it a \emph{real lift} if it is defined over $\K_\R$.
A lift of a tropical surface of degree $d$ can canonically be compactified in $\PP^3$ and is then of degree $d$, a lift of a tropical surface of bidegree $(d,e)$ can be compactified in $\PP^1\times \PP^2$ to a surface of bidegree $(d,e)$ and a lift of a tropical surface of tridegree $(d,e,f)$ can be compactified in $\PP^1\times \PP^1\times \PP^1$ to a surface of tridegree $(d,e,f)$.

To determine the  lifting multiplicities (i.e.\ numbers of ($\delta$-nodal) surfaces of a certain (bi-,tri-) degree passing through $\bw$ which tropicalize to a given $S$), we rely on patchworking results from \cite{MMS15, IMS09}.


\section{Floor plans for singular tropical curves}\label{sec-floorplanecurves}
We start by presenting the analogue of our main combinatorial tool in one dimension lower --- as a tool to count plane tropical curves with one node. We can recover the well-known count of nodal complex curves satisfying point conditions in this way \cite{Mi03}.

Throughout this section, we pick a horizontally stretched point configuration in $\mathbb{R}^2$. For counts of curves of degree $d$ in $\PP^2$, we fix $n=\binom{d+2}{2}-2$ points. For counts of curves of bidegree $(d,e)$ in $\PP^1\times \PP^1$, we fix $n=(d+1)\cdot (e+1)-2$ points. We use coordinates $(x,y)$ for $\R^2$ and let $\pi_y:\R^2\rightarrow \R$ be the projection to $y$.
We denote the points by $q_1,\ldots, q_n$ and their projections by $Q_i=\pi_y(q_i)$.

Recall that all tropical plane curves passing through the horizontally stretched points are floor decomposed, i.e.\ all vertical lines appear in the dual Newton subdivision \cite{BM08, FM09}. The well-known tool of floor diagrams restricts the count of tropical plane curves to its combinatorial essence by shrinking each connected component of a tropical curve minus its horizontal edges to a single vertex.
Different from that, a floor plan encodes the positions of the horizontal edges under the projection $\pi_y$.

\begin{definition}\label{def-floorplandim2}
A \emph{floor plan for a tropical plane curve of degree $d$ with one node} consists of an index $j$ and a tuple $(D_d,\ldots,D_1)$ of tropical divisors on $\R$, where each $D_i$ is of degree $i$.

The divisor $D_i$ passes through the following points:
\begin{align*}
&\mbox{if }i>j:\;\;Q_{\binom{d+2}{2}-\binom{i+2}{2}+1},\ldots,Q_{\binom{d+2}{2}-\binom{i+1}{2}-1}\\
&\mbox{if }i=j:\;\;Q_{\binom{d+2}{2}-\binom{j+2}{2}+1},\ldots,Q_{\binom{d+2}{2}-\binom{j+1}{2}-2}\\
&\mbox{if }i<j:\;\;Q_{\binom{d+2}{2}-\binom{i+2}{2}},\ldots,Q_{\binom{d+2}{2}-\binom{i+1}{2}-2}.
\end{align*}

Furthermore, the divisor $D_j$ may have a point of weight $2$ if $j\not\in\{1,d\}$. If not, one of the points of $D_j$ has to align with a point of $D_{j-1}$ or of $D_{j+1}$.

Analogously, we define a \emph{floor plan for a tropical plane curve of bidegree $(d,e)$ with one node}. The differences are: the tuple of divisors is $(D_{d},\ldots,D_0)$, now each $D_i$ is of degree $e$, $D_j$ may have a point of weight $2$ if $j\not\in\{0,d\}$, and the divisor passes through the following points:
\begin{align*}
&\mbox{if }i>j:\;\;Q_{(d-i)\cdot(e+1)+1},\ldots,Q_{(d-i)(e+1)+e}\\
&\mbox{if }i=j:\;\;Q_{(d-j)\cdot(e+1)+1},\ldots,Q_{(d-j)(e+1)+e-1}\\
&\mbox{if }i<j:\;\;Q_{(d-i)\cdot(e+1)},\ldots,Q_{(d-i)(e+1)+e-1}.
\end{align*}

\end{definition}
Observe that in the case of degree $d$, for $i\neq j$, the divisor $D_i$ has $i$ points and has to meet $i$ points, while for $i=j$, $D_j$ has to meet only $j-1$ points.
In the same way, for degree $(d,e)$, $D_i$ meets $e$ points if $i\neq j$ and $D_j$ meets $e-1$ points.

\begin{figure}
\input{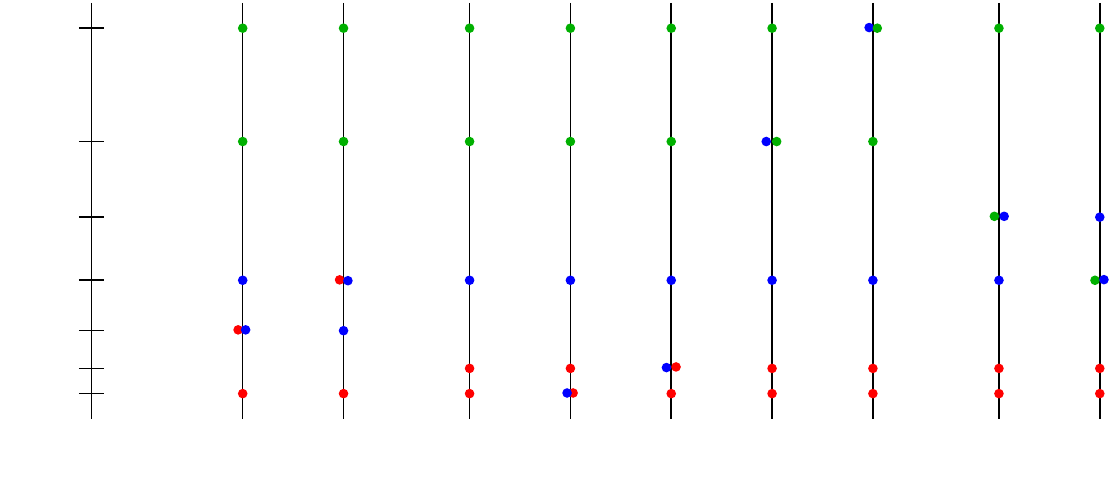_t}\caption{Floor plans for $1$-nodal curves of bidegree $(2,2)$. The divisor $D_2$ is drawn with red dots, $D_1$ with blue and $D_0$ with green. On the left, the position of the points $Q_i$ is marked.}\label{fig-exdim1}
\end{figure}
\begin{construction}\label{const-curves}
Given a floor plan $F$ we construct a \emph{floor composed} tropical curve as in Section 3 \cite{BBLdM17}: we set $X_i=\R$ for all $i$, the effective divisors are the $D_i$. The rational functions $f_i$ are, since they must satisfy $\divi(f_i)=D_i-D_{i-1}$, determined up to a shift. We fix the shift such that the constructed floor meets the point condition whose projection lies between the points of $D_i$ and the points of $D_{i-1}$.
\end{construction}

\begin{example} Let $(d,e)=(2,2)$. We list all floor plans with one node for this degree. We have $n=7$ points in $\R$.
We have to consider tuples consisting of $3$ divisors of degree $2$ each. If $j=2$, $D_2$ passes through $Q_1$, $D_1$ through $Q_3$ and $Q_4$ and $D_0$ through $Q_6$ and $Q_7$. One of the points of $D_2$ has to be at $Q_3$, or $Q_4$.
If $j=1$, $D_2$ passes through $Q_1$ and $Q_2$, $D_1$ passes through $Q_4$ and $D_0$ through $Q_6$ and $Q_7$. Either $D_1$ has a point of weight $2$ at $Q_4$, or one of the points of $D_1$ is at $Q_1$, $Q_2$, $Q_6$ or $Q_7$. If $j=0$, $D_2$ passes through $Q_1$ and $Q_2$, $D_1$ through $Q_4$ and $Q_5$ and $D_0$ through $Q_7$. One of the points of $D_0$ is at $Q_4$ or $Q_5$.
Figure \ref{fig-exdim1} depicts all the possibilities.

Figure \ref{fig-exdim1const} shows an example of how a tropical plane curve is constructed as a floor composed curve from a floor plan.


\end{example}

\begin{figure}
\input{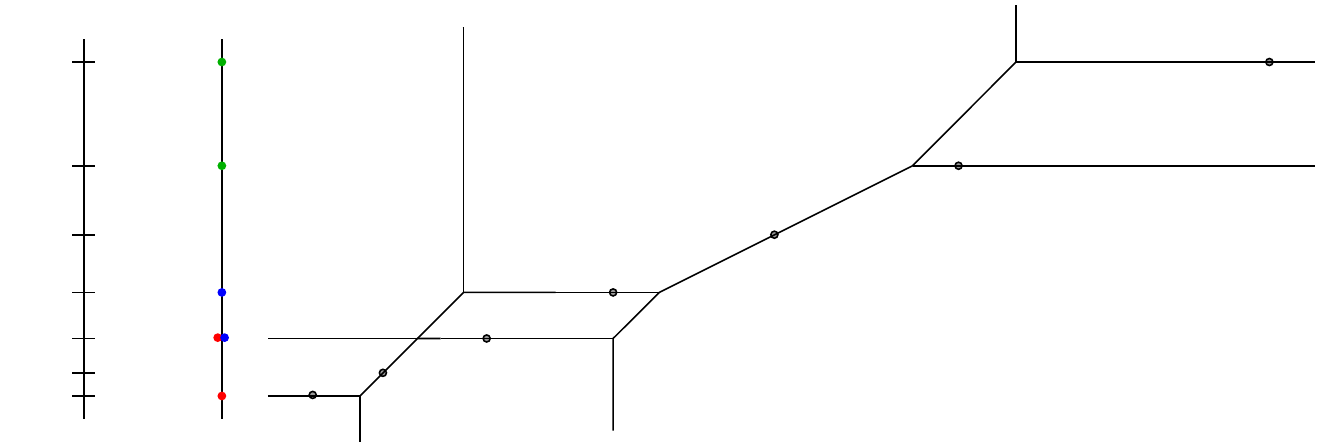_t}\caption{How to construct a plane curve satisfying point conditions from a floor plan.}\label{fig-exdim1const}
\end{figure}

\begin{definition}\label{def-floorplandim2complexmult}
We define the complex multiplicity $\mult_\C(F)$ of a floor plan $F$ as above to be $4$ if it contains a divisor with a point of weight $2$, and $1$ otherwise.
\end{definition}

\begin{theorem}
For a floor plan $F$ for a tropical plane curve of degree $d$ (resp.\ of bidegree $(d,e)$) with one node, Construction \ref{const-curves} produces a unique element $C$ in the set of tropical plane curves of degree of degree $d$ (resp.\ of bidegree $(d,e)$) with one node meeting the horizontally stretched points $q_1,\ldots,q_n$.

Conversely, every tropical plane curve of degree $d$ (resp.\ of bidegree $(d,e)$) with one node meeting the horizontally stretched points $q_1,\ldots,q_n$ arises from a floor plan using Construction \ref{const-curves}.

Furthermore, the complex multiplicity of $F$ equals the complex multiplicity of $C$.

\end{theorem}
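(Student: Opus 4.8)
The plan is to establish a bijection between floor plans $F$ and floor-decomposed tropical plane curves $C$ with one node meeting the horizontally stretched points, and then to check that the multiplicities match. I would organize the argument in three parts, mirroring the three assertions of the statement.

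First, I would show Construction \ref{const-curves} is well-defined and produces a $1$-nodal tropical curve of the required degree through the given points. The key input is that for horizontally stretched point configurations, every tropical plane curve through the points is floor decomposed, so it decomposes along its horizontal edges into floors indexed by their position under $\pi_y$; each floor of the curve of degree $d$ (resp.\ bidegree $(d,e)$) is dual to a horizontal slab of the Newton polytope, and the "floor" corresponding to index $i$ carries the divisor data $D_i$. I would verify that the degree count in Definition \ref{def-floorplandim2} — $i$ points for $D_i$ when $i\neq j$, and $i-1$ (resp.\ $e-1$) points for $D_j$ — exactly matches the dimension of the linear system of tropical divisors of the relevant degree on $\R$ through those points, so that generically each $D_i$ is determined and the rational functions $f_i$ with $\divi(f_i)=D_i-D_{i-1}$ exist, unique up to the shift fixed by Construction \ref{const-curves}. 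The node arises precisely from the "missing" point condition on $D_j$: either $D_j$ acquires a point of weight $2$ (an interior vertex of the Newton subdivision giving a node on the floor itself), or one of its points aligns with a point of a neighboring divisor $D_{j\pm1}$ (the node sitting on a horizontal edge between two floors). Counting: in the degree $d$ case the one free vertical coordinate plus the alignment/weight condition cuts the dimension by one relative to a smooth floor-decomposed curve, which is exactly the codimension of the nodal locus; this shows $C$ is $1$-nodal and that the $q_i$ are met.

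Second, for the converse, I would start from an arbitrary $1$-nodal tropical curve $C$ through the horizontally stretched points, use floor decomposedness to cut it into floors, record for each floor the tropical divisor $D_i$ cut out on $\R$ by the endpoints of the bounded horizontal edges, and locate the node. Here one must argue that the node is localized in a single floor or on a single horizontal edge between consecutive floors (it cannot be "spread out"), which follows from the tropical genus/Newton-subdivision analysis: a node corresponds either to a parallelogram or a non-unimodular triangle in the dual subdivision, and in a floor-decomposed configuration such a cell lies within one slab or straddles exactly two adjacent slabs. This pins down the index $j$ and shows $(j,(D_d,\dots))$ is a floor plan in the sense of Definition \ref{def-floorplandim2}, and that applying Construction \ref{const-curves} to it recovers $C$ (the shifts are forced by the point conditions). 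Combined with the first part this gives the bijection.

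Third, the multiplicity comparison: by Definition \ref{def-floorplandim2complexmult} we must show $\mult_\C(C)=4$ exactly when $F$ has a divisor with a point of weight $2$, and $1$ otherwise. The tropical multiplicity of a $1$-nodal curve is the product of local multiplicities over the dual cells; all cells except the one carrying the node are unimodular and contribute $1$. A weight-$2$ point in $D_j$ corresponds to a dual cell of lattice width $2$ in the vertical direction — a triangle or quadrilateral of normalized area $4$ (width $2$ times the horizontal spread $2$, or the standard "double" cell) — whereas an alignment of a point of $D_j$ with a point of a neighbor gives a unit parallelogram, contributing $1$. I would quote the patchworking/lifting multiplicity formulas from \cite{IMS09, MMS15} (or the classical count for plane nodal curves \cite{Mi03}) to match the tropical weight with the number of complex lifts. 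The main obstacle I anticipate is the converse direction — specifically, ruling out degenerate configurations where the node interacts with more than two floors or where two of the imposed point conditions coincide in projection, forcing a careful general-position argument for the horizontally stretched $q_i$ to guarantee that exactly the listed alignment possibilities occur and nothing else.
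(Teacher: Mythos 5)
You follow essentially the same route as the paper: Construction \ref{const-curves} gives the forward map; the converse is obtained by cutting a (necessarily floor decomposed) curve along its horizontal edges, projecting to recover the divisors $D_i$, and reading off the index $j$ from the location of the node germ; and the multiplicity comparison is carried out on the dual Newton subdivision. Structurally this is the paper's proof.

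However, your multiplicity step contains a concrete error. A point of weight $2$ in $D_j$ yields a horizontal bounded edge of weight $2$ of $C$, which in the dual subdivision is an edge of lattice length $2$ shared by \emph{two} triangles of normalized area $2$ each; it is not ``a triangle or quadrilateral of normalized area $4$''. Since the multiplicity of \cite{Mi03} is the product over trivalent vertices of the normalized areas of their dual triangles, the contribution here is $2\cdot 2=4$. With your description the computation would not go through: a quadrilateral in a simple subdivision is a parallelogram and contributes $1$, and a single area-$4$ triangle does not occur in this situation, so although the number $4$ is correct, your justification for it is not. Relatedly, you have the geometry of the two node germs interchanged: for a weight-$2$ point the tropical node is the midpoint of the weight-$2$ horizontal edge running between two floors (not ``on the floor itself''), whereas in the alignment case a horizontal edge passes through a floor and the node sits at that crossing, dual to the (indeed unimodular) parallelogram. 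Once these local pictures are corrected, the rest of your outline --- in particular the general-position point in the converse, namely that for a horizontally stretched configuration the only non-unimodular features of the subdivision are either one parallelogram or one weight-$2$ bounded edge with its two adjacent area-$2$ triangles --- is exactly what the paper uses, relying on the structure of floor decomposed curves through horizontally stretched points from \cite{BM08,FM09}.
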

In particular, the count of floor plans with complex multiplicity equals the count of tropical curves with complex multiplicity, which by Mikhalkin's Correspondence Theorem \cite{Mi03} equals the number of complex $1$-nodal curves of degree $d$ (resp.\ bidegree $(d,e)$) satisfying point conditions.
\begin{proof}
By construction, we obtain from \ref{const-curves} a tropical curve satisfying the point conditions from a floor plan. If the floor plan has a divisor with a point of weight $2$, this tropical curve has a horizontal bounded edge of weight $2$. It is dual to an edge of weight $2$ in the dual subdivision, adjacent to two triangles of area $2$ each. If a non-fixed point aligns with another, the corresponding horizontal end passes through a floor, producing a parallelogram in the dual Newton subdivision. Besides those special features, the subdivision contains only triangles of area one, by the genericity of the point conditions and the fact that we construct ends of weight $1$ only. The curve has the right degree. Its multiplicity is $4$ if it has an edge of weight $2$ and $1$ otherwise.

Vice versa, given a tropical curve contributing to the count, we can produce a floor plan by projecting the horizontal edges between two floors to obtain the $D_i$. The index $j$ is obtained either by the position of the weight $2$ end, or by the position of a horizontal bounded edge which passes through a floor and does not meet a point between two adjacent floors. The projection thus either gives a divisor with a point of weight $2$, or a point which aligns with a point of the divisor corresponding to the neighbouring two floors.
\end{proof}

With the following proposition, we demonstrate how floor plans can be used to count efficiently.

\begin{proposition}\label{prop-complexcurvecount} The count of floor plans $F$ for a tropical plane curve of degree $d$ (resp.\ of bidegree $(d,e)$) with one node with complex multiplicity equals $3(d-1)^2$ (resp.\ $6de-4d-4e+4$).
\end{proposition}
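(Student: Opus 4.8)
The plan is to count floor plans directly from Definition \ref{def-floorplandim2}, organizing by the index $j$, and to reconcile the answer with the known numbers $3(d-1)^2$ and $6de-4d-4e+4$ of complex $1$-nodal curves satisfying point conditions. I will treat the degree-$d$ case in detail; the bidegree-$(d,e)$ case is analogous after replacing the triangular configuration by a rectangular one.

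First I would fix $j\in\{1,\dots,d\}$ and enumerate the floor plans with that index. For $j\notin\{1,d\}$ there are two types. In the first type, $D_j$ carries a point of weight $2$: then $D_j$ is a divisor of degree $j$ on $\R$ meeting the prescribed $j-1$ fixed points and having a double point somewhere — but once the $j-1$ fixed points and the requirement of a weight-$2$ point are imposed, the double point is forced to absorb the remaining degree, so there is exactly one such divisor, hence one such floor plan, and it carries complex multiplicity $4$. (When $j\in\{1,d\}$ this type is excluded, as $D_j$ has degree $1$ resp.\ would have to coincide with the apex/base.) In the second type, all points of $D_j$ have weight $1$, and one of the $j$ points of $D_j$ must align with one of the points of $D_{j-1}$ or $D_{j+1}$. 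Here I would count: $D_{j+1}$ has $j+1$ points and $D_{j-1}$ has $j-1$ points available to align with, and $D_j$ has $j$ positions — but the fixed points already pin down $j-1$ of the $j$ points of $D_j$, so only one point of $D_j$ is free, and it has $(j+1)+(j-1)=2j$ admissible target positions (one for each point of the two neighbouring divisors), each giving one floor plan of complex multiplicity $1$. For the boundary indices $j=1$ and $j=d$ only the second type survives, and I would check the analogous bookkeeping: $D_1$ has degree $1$ with no fixed point, so its single point aligns with one of the $2$ points of $D_2$, giving $2$ plans; symmetrically $D_d$ gives $2$ plans (it must align with a point of $D_{d-1}$, which has $d-1$ points, but by the degree constraint only $2$ of these are geometrically realizable as a node-producing alignment — I would need to verify this carefully against the construction in Construction \ref{const-curves}).

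Summing the weighted contributions then gives
\[
N_{1,\C}^{\PP^2,\floor}(d)=\underbrace{2\cdot 1}_{j=1}+\underbrace{2\cdot 1}_{j=d}+\sum_{j=2}^{d-1}\Bigl(\underbrace{4\cdot 1}_{\text{wt-}2}+\underbrace{1\cdot 2j}_{\text{aligned}}\Bigr),
\]
and I would simplify the right-hand side, using $\sum_{j=2}^{d-1}2j=(d-1)d-2$ and $\sum_{j=2}^{d-1}4=4(d-2)$, to obtain $4+4(d-2)+d^2-d-2=d^2+3d-6$... at which point the arithmetic must be made to match $3(d-1)^2=3d^2-6d+3$; the discrepancy signals that my naive count of aligned configurations is off, and the genuinely delicate point is exactly \emph{how many} alignments actually produce a valid floor-decomposed tropical curve with a node (as opposed to a non-reduced or higher-genus configuration), which Construction \ref{const-curves} and the genericity of the horizontally stretched points constrain. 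So the main obstacle I anticipate is not the summation but pinning down the correct local count of admissible alignments at each $D_j$ — in particular which alignments of an end of $D_j$ with a point of $D_{j\pm1}$ yield a parallelogram in the dual subdivision giving a genuine node, versus which are excluded — and verifying the boundary cases $j\in\{1,d\}$ separately. Once that local count is correct, the closed form $3(d-1)^2$ (resp.\ $6de-4d-4e+4$ after the same analysis with $d+1$ divisors of degree $e$) follows by the elementary summation above, and the agreement with the classical count is then guaranteed by the preceding Theorem together with Mikhalkin's Correspondence Theorem.
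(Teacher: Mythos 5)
Your decomposition by the index $j$ matches the paper's, and part of your bookkeeping is right (the $2j$ alignment targets for $1<j<d$, and the $2$ choices at $j=1$), but two of your local counts are wrong, and they --- not the alignment count you end up blaming --- are the source of your arithmetic mismatch. First, for $1<j<d$ the number of floor plans in which $D_j$ carries a weight-$2$ point is $j-1$, not $1$: the divisor $D_j$ has degree $j$ but is only required to pass through $j-1$ fixed points, so once a weight-$2$ point is imposed its $j-1$ support points must be exactly those fixed points, and \emph{any one} of the $j-1$ fixed points may be the one of weight $2$. Each choice is a distinct floor plan of multiplicity $4$, contributing $4(j-1)$ rather than $4$; your claim that the double point is ``forced'' overlooks this freedom (compare the paper's bidegree-$(2,2)$ example, where the weight-$2$ point sits at the fixed point $Q_4$, and in general at any of the $e-1$ fixed points). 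Second, the boundary case $j=d$ gives $d-1$ floor plans, not $2$: $D_d$ has exactly one point not pinned by its $d-1$ point conditions, and it may align with any of the $d-1$ points of $D_{d-1}$, each with multiplicity $1$; there is no further ``degree constraint'' cutting this down, and your hedge at this point is where the count goes astray. Your diagnosis that the $2j$ aligned count is the delicate, possibly wrong ingredient is therefore misplaced --- that part was correct.

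With the two corrections the elementary summation closes exactly as in the paper:
\begin{equation*}
(d-1)+2+\sum_{j=2}^{d-1}\bigl(4(j-1)+2j\bigr)=d+1+\sum_{j=2}^{d-1}(6j-4)=3d^2-6d+3=3(d-1)^2,
\end{equation*}
and in bidegree $(d,e)$ (a tuple $(D_d,\ldots,D_0)$ of $d+1$ divisors of degree $e$, with $e-1$ point conditions on $D_j$) each interior index contributes $4(e-1)+2e$ and each of the two boundary indices contributes $e$, giving $2e+(d-1)(6e-4)=6de-4d-4e+4$. Finally, note that your fallback --- invoking the preceding theorem plus Mikhalkin's correspondence to ``guarantee'' the answer --- does not repair the argument as written: you use it only as a consistency check after deferring the very local counts that are in question, and the point of this proposition is precisely to exhibit the direct combinatorial count; as a standalone alternative proof it would also need the classical input that the number of $1$-nodal curves is $3(d-1)^2$ resp.\ $6de-4d-4e+4$, which is external to the floor-plan calculus being demonstrated.
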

\begin{proof}
We begin with the case of degree $d$.
If $j=d$, the point of $D_d$ which is not fixed by the $d-1$ conditions for $D_d$ can align with any point of $D_{d-1}$, so there are $d-1$ choices, each of multiplicity $1$. If $j=1$, the point can align with one of the two points of $D_2$, leading to $2$ choices. For $d>j>1$, we have $j-1$ choices for a point of weight $2$, leading to a summand of $4(j-1)$. The non-fixed point can also align with any of the $j+1$ points of $D_{j+1}$ and any of the $j-1$ points of $D_{j-1}$, leading to a summand of $2j$.
Altogether, we obtain \begin{align*}& d-1+2+\sum_{j=2}^{d-1}(4(j-1)+2j)= d+1+\sum_{j=2}^{d-1}(6j-4)\\&=
d+1+6\left(\frac{d(d-1)}{2}-1\right)-4(d-2)= 3d^2-6d+3=3(d-1)^2.\end{align*}

We now turn to the case of bidegree $(d,e)$.
If $j=e$, there are $d$ points with which the point of $D_d$ which is not fixed by the $d-1$ conditions for $D_d$ can align. The same holds if $j=0$.
For $e>j>0$, the non-fixed point can either form a point of weight $2$
with each of the $d-1$ fixed points --- each of these choices has
multiplicity $4$, so we have to sum $4(d-1)$ of those for each $j$ ---
or it can align with one of the $d$ points of $D_{j+1}$ or one of the
$d$ of $D_{j-1}$, leading to a summand of $2d$ for each such $j$.
Altogether, we obtain $d+(e-1)\cdot (4(d-1)+2d)+d=6de-4d-4e+4$.
\end{proof}

We now return to the tropical proof of the existence of a pencil of curves of degree $d$ containing $3(d-1)^2$ real nodal curves. This approach is due to Bertrand \cite{Bertrand}.
Since we will consider real curves, we must also choose a real configuration of points, meaning  for each point $q_i$ we must select the signs of its coordinates. To do this we fix  $\epsilon_i \in \mathbb{Z}_2^2$, where we think of $0, 1  \in  \mathbb{Z}_2$ as representing $+$ and $-$, respectively.

Bertrand introduces  the notion of signed marked floor diagrams for curves based on the notion of a real tropical plane curve from \cite[Definition 7.8]{Mi03} . Firstly, a real structure on a tropical plane curve $C$ is obtained by assigning
to each edge $e$ of $C$  an equivalence class $\mathcal{S}_e \in {\mathbb{Z}_2^2}/{w_e v_e}$ where $w_e$ is the weight of $e$ and $v_e$ is its primitive integer direction modulo $2$. These assignments are subject to compatibility conditions at the vertices.
For example, if $e$ is a horizontal edge  of weight $1$ of a floor decomposed curve $C$,  then a real structure on $C$ would assign an equivalence class $\mathcal{S}_e = \{(\sigma_1, \sigma_2) \sim (1+ \sigma_1 , \sigma_2)\}$, for some $(\sigma_1, \sigma_2) \in\mathbb{Z}_2^2$. If $e$ is a horizontal edge  of even weight  of $C$, then the equivalence class would consist of a single element $\mathcal{S}_e = \{(\sigma_1, \sigma_2)\}$.
An edge of weight $1$ of a floor of $C$ is assigned an equivalence class of the form $\mathcal{S}_e =\{(\sigma_1, \sigma_2) \sim (\sigma_1 + q ,\sigma_2 + 1)\}$ where $q = 0, 1$ depending on whether the first coordinate of $v_e$ is even or odd.

A real tropical curve $C$ passes through a  point configuration $q_1, \dots , q_k$ with sign vector $s = (s_1, \dots, s_k) \in (\mathbb{Z}_2^2)^k$  if the curve passes through the point configuration and for each point $q_i$ contained on an edge $e_i$  we have $s_i \in \mathcal{S}_{e_i}$.
Here we assume that the points $q_i$ are contained in the interior of the edges of $C$.


Bertrand also introduces  the real multiplicity of a signed marked tropical floor diagram, which here we relate back to floor decomposed curves.  
The non-trivial part of the definition occurs when the floor decomposed real tropical curve  has a horizontal  edge of weight $2$ and its complex multiplicity is $4$. The horizontal edge $e$ attaches to two consecutive  floors $X_i$ and  $X_{i+1}$ defined by divisors $D_{i} -  D_{i-1}$ and $D_{i+1}  -  D_i$, respectively.
Notice that if $e'$ and $e''$ are the edges of a floor adjacent to a horizontal  edge of $C$ of weight $2$, then $v_{e'} \equiv v_{e''} \mod 2$ and $\mathcal{S}_{e'} = \mathcal{S}_{e''}$.  

Suppose $X$ is a floor of the floor decomposed curve $C$ which is  adjacent to a horizontal  even weighted edge $\alpha$. Let $e'$ and $e''$ be  the two edges of the floor $X$
adjacent to $\alpha$.  The multiplicity assigned  to the pair $\alpha$ and $X$ is then
$$\mu_{\alpha, X} =
\begin{cases}
0 \text{ if } \mathcal{S}_ \alpha \not \subset  \mathcal{S}_{e'}  \\
2 \text{ if } \mathcal{S}_ \alpha  \subset \mathcal{S}_{e'}.
\end{cases}$$

\begin{definition}\label{def:realmultfloorcurves}

The $\mathbb{R}$-multiplicity of a marked real tropical curve $C$ passing through the point configuration $\{q_1, \dots , q_k\}$ with sign vector $s$
is
$$\mu_C = \prod_{X \text{ a floor of } C} \mu_X$$
where
$$\mu_X = \prod_{\alpha \text{ even edge adjacent to }  X} \mu_{\alpha, X} , $$
and $\mu_X = 1$ if there is no even weighted  edge adjacent to $X$.
\end{definition}

\begin{example}\label{ex:realweight2curve}
Figure \ref{fig:weight2curve} shows a portion of a real tropical  floor decomposed curve between the floors $X_5$ and $X_6$. The part of the curve is shown passing through a configuration of $6$ real points. The signs of these points, read from left to right, are fixed to be  $\epsilon, \epsilon, \epsilon' , \epsilon, \epsilon'  \epsilon'$, where
$\epsilon$ is any choice of pair of signs and $\epsilon' := \epsilon + (1, 0)$ if we identity $\{+, -\}$ with $\{0, 1\}$.

The real structure on the curve is given by prescribing an equivalence class of signs $\mathcal{S}_e$ on each edge. The horizontal edges of the curve all have $\mathcal{S}_e = \{ \epsilon, \epsilon'\}$, except for the edge $\alpha$ of weight $2$ which is only labelled with $\mathcal{S}_{\alpha} = \{\epsilon\}$.  Let $\epsilon'' = \epsilon + (0, 1)$ and $\epsilon''' = \epsilon + (1, 1)$. Then the labels next to an edge in the figure indicate the equivalence class $\mathcal{S}_e \subset \Z_2^{2}$ of the other edges of the curve.
By Definition \ref{def:realmultfloorcurves}, we have $\mu_{X_5, \alpha} = \mu_{X_6, \alpha} = 2$.

\end{example}

\begin{figure}\label{fig:weight2curve}
\includegraphics[scale= 0.7]{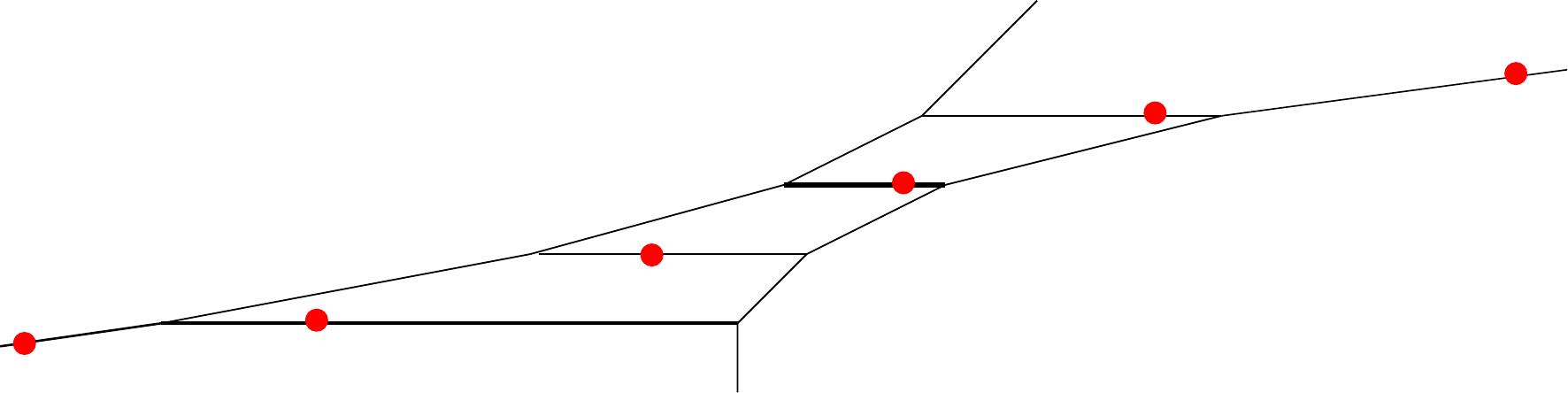}
\put(-395,10){$\{ \epsilon, \epsilon''\}$}
\put(-300,32){$\{ \epsilon', \epsilon''\}$}
\put(-240,45){$\{ \epsilon, \epsilon''\}$}
\put(-200,58){$\{ \epsilon, \epsilon''\}$}
\put(-170,80){$\{ \epsilon', \epsilon''\}$}
\put(-30,60){$\{ \epsilon', \epsilon'''\}$}
\put(-110,45){$\{ \epsilon, \epsilon'''\}$}
\put(-150,30){$\{ \epsilon, \epsilon'''\}$}
\put(-175,15){$\{ \epsilon', \epsilon'''\}$}
\put(-183,0){$\{ \epsilon, \epsilon'''\}$}
\put(-170,38){$2$}
\caption{A portion of a real tropical floor decomposed curve with a weight $2$ edge passing through a real point configuration from Example \ref{ex:realweight2curve}. The signs of the red marked points read from left to right are $\epsilon, \epsilon, \epsilon' , \epsilon, \epsilon'  \epsilon'$.
}
\end{figure}
\begin{theorem}\cite{Bertrand}\label{thm:maximalconfigtropicalcurves}
For every $d$ there exists a signed configuration of $ \frac{d(d+3)}{2} - 1$ points such that the number of uninodal marked real tropical curves passing through this point configuration counted with the multiplicities from Definition \ref{def:realmultfloorcurves} is equal to $3(d-1)^2$.
\end{theorem}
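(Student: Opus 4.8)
The plan is to produce the signed configuration explicitly and then count, exactly mirroring the complex argument in Proposition~\ref{prop-complexcurvecount} but tracking signs. First I would take the horizontally stretched point configuration $q_1,\dots,q_n$ with $n=\tfrac{d(d+3)}{2}-1$ already used throughout this section, and assign to each $q_i$ a sign $s_i\in\Z_2^2$; the key choice is to make the signs of the points lying in the ``block'' feeding a single divisor $D_i$ be constant, say all equal to a fixed $\epsilon$, so that the only interesting sign behaviour happens where a non-fixed point of $D_j$ aligns with a point of a neighbouring divisor, or where $D_j$ acquires a weight-$2$ point. One should check (this is the patchworking-free, purely combinatorial input from \cite[Definition 7.8]{Mi03} recalled above) that once the signs of the fixed points are prescribed, the equivalence classes $\mathcal{S}_e$ propagate uniquely along each floor of the floor-composed curve $C$ of Construction~\ref{const-curves}, using the two propagation rules stated in the text: $\mathcal{S}_e=\{(\sigma_1,\sigma_2)\sim(1+\sigma_1,\sigma_2)\}$ across a horizontal weight-$1$ edge, and $\mathcal{S}_e=\{(\sigma_1,\sigma_2)\sim(\sigma_1+q,\sigma_2+1)\}$ across an edge of a floor. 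Thus every complex floor plan $F$ gives rise to a well-defined marked real tropical curve $C(F,s)$, and I would only need to decide for which $(F,s)$ the $\R$-multiplicity $\mu_{C(F,s)}$ is nonzero (hence $=4$).

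Next I would run through the three types of floor plans exactly as in the proof of Proposition~\ref{prop-complexcurvecount}. For $j=d$ (and symmetrically the other extreme), the non-fixed point of $D_d$ aligns with one of the $d-1$ points of $D_{d-1}$; such a curve has no even-weight horizontal edge, so $\mu_C=1$ and every one of these $d-1$ alignments survives regardless of signs. Likewise for the alignments with $D_{j\pm1}$ in the range $1<j<d$: these contribute $\mu_C=1$. The only place where the real count can drop below the complex count is at a weight-$2$ horizontal edge $\alpha$, where $\mu_{\alpha,X}\in\{0,2\}$ according to whether $\mathcal{S}_\alpha\subset\mathcal{S}_{e'}$ for the adjacent floor edge $e'$; here I would choose the signs $\epsilon$ of the two fixed points that merge into the weight-$2$ point so that $\mathcal{S}_\alpha=\{\epsilon\}$ and $\epsilon$ does lie in the propagated class $\mathcal{S}_{e'}=\mathcal{S}_{e''}$ on \emph{both} adjacent floors $X_j$ and $X_{j+1}$ — this is exactly the situation of Example~\ref{ex:realweight2curve}, where $\mu_{X_j,\alpha}=\mu_{X_{j+1},\alpha}=2$, so the weight-$2$ curve contributes $\mu_C=2\cdot2=4$, matching its complex multiplicity. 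The point is that the constancy of signs within each block makes this compatibility automatic: the class on each floor edge is determined by the common sign $\epsilon$ of the block it passes over, so $\epsilon\in\mathcal{S}_{e'}$ holds for free. Hence, with this choice of $s$, \emph{every} complex floor plan contributes the same (complex) multiplicity in the real count.

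Finally I would conclude by summing: since every floor plan $F$ has $\mu_{C(F,s)}=\mult_\C(F)$, the real count equals $\sum_F\mult_\C(F)$, which by Proposition~\ref{prop-complexcurvecount} is $3(d-1)^2$, and by the correspondence Theorem~\ref{thm:maximalconfigtropicalcurves}'s underlying patchworking statement (the real analogue of Mikhalkin's Correspondence Theorem for curves with a node, as in \cite{Mi03,Bertrand}) this equals the number of real uninodal curves of degree $d$ through a suitable real pencil's base points, reproving Lemma~\ref{l-uni} tropically. I expect the main obstacle to be verifying cleanly that the sign-propagation along each floor is genuinely consistent (no obstruction at the trivalent vertices of a floor) and that the particular block-constant sign assignment is globally realizable — i.e.\ that the local equivalence classes glue to a genuine real structure on all of $C$ simultaneously for every floor plan; this is where one must be careful with the parity bookkeeping in the rule $\mathcal{S}_e=\{(\sigma_1,\sigma_2)\sim(\sigma_1+q,\sigma_2+1)\}$, tracking how $q$ alternates as one moves up a floor, and checking that the merge condition $\mathcal{S}_\alpha\subset\mathcal{S}_{e'}$ is compatible on both sides of $\alpha$ given $v_{e'}\equiv v_{e''}\bmod 2$.
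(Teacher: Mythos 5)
Your reduction to checking the weight-$2$ edges is the right strategy and matches the paper's, but the claim on which your count rests --- that a block-constant sign assignment makes the compatibility $\mathcal{S}_\alpha\subset\mathcal{S}_{e'}$ automatic --- is false, and this is exactly where the proof lives. The equivalence class carried by the edges of a floor is not constant along the floor: by the propagation rule at a trivalent vertex it is shifted by $(1,0)$ each time the floor passes the attachment vertex of a weight-$1$ horizontal edge (this is visible in Example \ref{ex:realweight2curve}, where consecutive floor edges carry $\{\epsilon,\epsilon''\}$ and $\{\epsilon',\epsilon''\}$ alternately). Moreover the class of a weight-$1$ horizontal edge is $\{\epsilon,\epsilon'\}$ whether its marked point has sign $\epsilon$ or $\epsilon'$, so the propagated class on the floor edges adjacent to the weight-$2$ edge $\alpha$ depends only on the parity of the position $k$ of $\alpha$'s marked point inside its block, not on your choice of signs for the weight-$1$ points; whereas $\mathcal{S}_\alpha$ is pinned to the single sign of the marked point on $\alpha$. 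With all signs in the block equal to $\epsilon$, a weight-$2$ edge marked at a position $k$ of the wrong parity has $\mathcal{S}_\alpha=\{\epsilon\}$ while $\mathcal{S}_{e'}=\{\epsilon',\,\cdot\,\}$, so $\mu_{\alpha,X}=0$ and that floor plan contributes $0$ instead of $4$; since both parities of $k$ occur once $j\ge 4$, your configuration undercounts and does not reach $3(d-1)^2$. Note also that you cannot choose the signs ``so that $\epsilon$ does lie in the propagated class'' curve by curve: a single signed configuration must work simultaneously for all floor plans, and the weight-$2$ point is one marked point of the configuration, not two points whose signs you may adjust per curve.

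The paper's proof differs precisely at this point: inside each block of size $j$ the signs are not constant but follow the pattern $\epsilon,\epsilon,\epsilon',\epsilon,\epsilon',\dots$ with $\epsilon'=\epsilon+(1,0)$ (the last sign repeating the $(j-1)$-st), so that the pinned class $\mathcal{S}_\alpha$ alternates with $k$ exactly in step with the propagated floor class; the inclusion $\mathcal{S}_\alpha\subset\mathcal{S}_e=\mathcal{S}_{e'}$ then holds for every admissible $k$, and it has to be verified separately on both adjacent floors $X_j$ and $X_{j-1}$, which the paper does. The remaining ingredients of your outline --- real multiplicity $1$ for curves without even-weight edges independently of the signs, and summation via Proposition \ref{prop-complexcurvecount} --- agree with the paper, but without the alternating block pattern (or some other parity-correct choice) the central compatibility step fails.
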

\begin{proof}
Firstly, by \cite{Bertrand} the multiplicity of a marked real tropical curve $C$ passing through the point configuration  $\{q_1, \dots , q_k\}$ with sign vector $s$ from Definition \ref{def:realmultfloorcurves}
corresponds to the number of real curves tropicalizing to $C$ and passing through a  configuration of real points which tropicalize to $\{q_1, \dots , q_k\}$. 

We choose a horizontally stretched point configuration. Notice that $$ \frac{d(d+1)}{2} - 1 + d  =  d + \sum_{i = 2}^d j,$$ so to specify the signs of the points we do it in ``blocks''. The first block of $d$ signs can be chosen arbitrarily. Then the signs are fixed in a block of size $d$, then $d-1$, $d-2$ and so on, until the final block of size two. 



Fix an $\epsilon \in (\Z_2)^2$ and let  $\epsilon' := \epsilon +(1, 0)$. The first sign of a block with $j$ points is $\epsilon$ considered in $(\Z_2)^2$ and the second sign of a block is the same as the first. Then the signs alternate between $\epsilon$ and $\epsilon'$ until the last sign of the block which is equal to the sign of the  $(j-1)$-th sign.
As an example, in degree $7$ a choice of signs of the points written in the order $Q_1, \dots, Q_{\frac{d(d+1)}{2} + d - 1 }$ is
$$\epsilon_1, \epsilon_2, \epsilon_3, \epsilon_4, \epsilon_5, \epsilon_6, \epsilon_7| \epsilon, \epsilon, \epsilon' , \epsilon, \epsilon'  \epsilon, \epsilon   |\epsilon, \epsilon , \epsilon', \epsilon , \epsilon', \epsilon' |  \epsilon , \epsilon, \epsilon' , \epsilon, \epsilon   | \epsilon ,\epsilon,  \epsilon', \epsilon' |   \epsilon,  \epsilon,  \epsilon | \epsilon, \epsilon.$$
The bars  ``$|$'' in the list are only used to delimit a block and can be removed.

Suppose $C$ is a uninodal tropical curve with no weight $2$ edges and passing through a point configuration. No matter what choice of sign vector for the points, it is possible to equip $C$ uniquely with signs so that the signed tropical curve passes through the signed point configuration \cite{Bertrand}. Therefore, if the tropical curve has no edges with even weight the real and complex tropical multiplicities are both equal to one.

 Otherwise, a uninodal tropical curve can have at most one weight $2$ edge. If it does have an edge of weight $2$, its complex multiplicity is equal to $4$.
When the tropical curve is floor decomposed the weight $2$ edge is a horizontal edge and we can suppose it is a weight $2$ edge joining the  floors $X_{j-1}$ and $X_{j}$
together with  another  $j - 3$ edges of weight one, for some $3 \leq j \leq d$.
Using the partition of the sign vector into blocks, the sign of the point marking the floor $X_{j}$ is $\epsilon$ and the sign of the point on the floor $X_{j-1}$ is
$\epsilon + (j,0)$. In other words, the first and last signs  in the $j$-th block are the signs of the marked point on  the floors $X_{j}$ and  $X_{j-1}$, respectively.

Denote the weight $2$  edge by $\alpha$ and suppose that it is marked by the  $k$-th sign of the block of size $j$, so that there are exactly $k-2$ edges of weight one that come before it. Notice that  $2 \leq k \leq j -1 $ since the first and last signs of the block mark the floor.   Therefore, we have
$\mathcal{S}_ {\alpha} = \{ \epsilon \} $ if $k$ is even and $\mathcal{S}_ \alpha = \{ \epsilon' \}$ if $k$ is odd.
Let $e$ and $e'$ be the two edges adjacent to $\alpha$ in the floor $X_{j}$. The  edge $a$ of the floor  $X_{j}$ marked by the point must have the  equivalence class of signs
$\mathcal{S}_{a} = \{\epsilon, \epsilon + ( j, 1)\}$, where again $ \epsilon + ( j, 1)$ is considered in $(\Z_2)^2$. Moreover, any weight one edge between the floors $X_{j}$ and $X_{j-1}$ has equivalence class of signs $\{\epsilon, \epsilon'\}$.   By the rule for propagating the equivalence classes around a trivalent vertex with only weight one edges we find that
$\mathcal{S}_{e} = \mathcal{S}_{e'} = \{\epsilon , \epsilon + ( j,
1)\}$  if $k$ is even and $\mathcal{S}_{e} = \mathcal{S}_{e'}
=\{\epsilon', \epsilon + ( j, 1)\}$  if $k$ is odd. Therefore, no
matter what the parity of $k$ is we have
$\mathcal{S}_{\alpha} \subset \mathcal{S}_{e} = \mathcal{S}_{e'} $. Thus $\mu_{\alpha, X_{j}} = 2$.

We can perform the analogous  check for the floor $X_{j-1}$.  We use the same convention as above and suppose that the weight $2$ edge is marked by the $k$-th sign of the block of size $j$. Then as above $\mathcal{S}_ \alpha = \{ \epsilon \} $ if $k$ is odd and $\mathcal{S}_ \alpha = \{ \epsilon' \}$ if $k$ is even. The edge $a$ of the floor  $X_{j-1}$ marked by the point is in direction $(j-1, 1)$ and is marked by $\epsilon + (j-1, 0)$, considered in $(\Z_2)^2$. Therefore,  the  equivalence class of signs is
$\mathcal{S}_{a} = \{\epsilon + (j-1, 0) , \epsilon + (0, 1)\}$. By the rule for propagating the equivalent classes of signs we find that  $\mathcal{S}_{a'} = \{\epsilon, \epsilon + (0, 1)\}$,
where $a'$ is the unbounded edge of the floor $X_{j-1}$ in direction $(0,1)$. Therefore, if $e$ and $e'$ are the edges of the floor adjacent to $\alpha$, we have
$\mathcal{S}_{e} = \mathcal{S}_{e'} = \{\epsilon, \epsilon + (0, 1)\} $ if $k$ is even  and
or $\mathcal{S}_{e} = \mathcal{S}_{e'} = \{\epsilon', \epsilon + (0,
1)\} $ if $k $ is odd. Once again, no matter what the parity of $k$ is we have
$\mathcal{S}_{\alpha} \subset \mathcal{S}_{e} = \mathcal{S}_{e'} $. Thus $\mu_{\alpha, X_{j-1}} = 2$.
Then the real  multiplicity of the
floor decomposed tropical  curve is equal to four which is the same as the complex multiplicity, therefore, the real count is equal to the complex count which is $3(d-1)^2$.
\end{proof}

Using floor decomposed real tropical curves it is also possible to find a real point configuration in $\PP^1\times\PP^1$ such that the number of real curves passing through the point configuration is maximal, i.e.~is equal to the complex number.

\begin{theorem}
For every bidegree $(d, e)$  there exists a real point configuration in $\PP^1\times\PP^1$ such that the number
of marked real tropical curves passing through this point configuration counted with the multiplicities from Definition \ref{def:realmultfloorcurves} is equal to $6de-4d-4e+4$.
\end{theorem}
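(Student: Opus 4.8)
The statement is the $\PP^1\times\PP^1$ analogue of Theorem \ref{thm:maximalconfigtropicalcurves}, and the plan is to repeat the argument carried out there for $\PP^2$, replacing the triangular arrangement of points by the rectangular one dictated by the bidegree $(d,e)$. First I would set up notation: a horizontally stretched configuration of $n=(d+1)(e+1)-2$ points in $\R^2$, whose projections $Q_1,\dots,Q_n$ are grouped into $d+1$ blocks — the first of size $e$, the last of size $e$, and the $d-1$ intermediate blocks of size $e+1$ each (so that $e + (d-1)(e+1) + e = (d+1)(e+1)-2$, matching the point count used in Definition \ref{def-floorplandim2}). Each block governs one floor $X_i$ of a floor decomposed curve, exactly as in the degree-$d$ case, except that now there are $d+1$ floors (indexed $0,\dots,d$), each an $\R$ carrying a divisor of degree $e$, and the number of points a divisor must meet is $e$ for $i\neq j$ and $e-1$ for $i=j$.

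**Choice of signs.** As in Theorem \ref{thm:maximalconfigtropicalcurves}, I would fix $\epsilon\in(\Z_2)^2$, set $\epsilon':=\epsilon+(1,0)$, and prescribe the signs block by block: within each block the first two signs are $\epsilon$, then the signs alternate $\epsilon,\epsilon',\epsilon,\dots$, and the last sign of the block equals the $(|\text{block}|-1)$-th sign. The first block's signs may be chosen arbitrarily. The point of this pattern is exactly the one exploited before: the first and last signs of the $i$-th block encode the real structure of the marked edge on the floor $X_i$ in a way that is compatible (via the vertex-propagation rules recalled before Definition \ref{def:realmultfloorcurves}) whether or not that floor carries the weight-$2$ edge.

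**Core computation.** By Bertrand's correspondence (the first sentence of the proof of Theorem \ref{thm:maximalconfigtropicalcurves}, valid in $\PP^1\times\PP^1$ as well), it suffices to show that every uninodal marked real tropical curve of bidegree $(d,e)$ through the configuration has real multiplicity equal to its complex multiplicity. Curves with no even-weight edge contribute $1=1$ automatically, since a real structure compatible with any sign vector exists uniquely. For a curve with a horizontal weight-$2$ edge $\alpha$ — necessarily joining two consecutive floors $X_{j-1}$ and $X_j$ with $1\le j\le d$, say $\alpha$ marked by the $k$-th sign of the $j$-th block with $2\le k\le |\text{block}|-1$ — I would run the same two local checks as in the $\PP^2$ proof: trace the equivalence class $\mathcal{S}_\alpha\in\{\{\epsilon\},\{\epsilon'\}\}$ (determined by the parity of $k$), then use the $a$-edge of each of $X_{j}$ and $X_{j-1}$ (now in directions $(1,0)$ and $(0,1)$, since the Newton polytope is a rectangle rather than a triangle) together with the propagation rule around trivalent vertices with weight-one edges to compute $\mathcal{S}_e=\mathcal{S}_{e'}$ for the two edges adjacent to $\alpha$ in each floor, and conclude $\mathcal{S}_\alpha\subset\mathcal{S}_e=\mathcal{S}_{e'}$ regardless of the parity of $k$. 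This yields $\mu_{\alpha,X_j}=\mu_{\alpha,X_{j-1}}=2$, hence $\mu_C=4=\mult_\C(C)$. Summing over all such $C$ and invoking Proposition \ref{prop-complexcurvecount}, the real count equals $6de-4d-4e+4$.

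**Main obstacle.** The only genuine change from the $\PP^2$ case is bookkeeping: the unbounded edges of a floor of a bidegree-$(d,e)$ curve point in the coordinate directions $(1,0)$ and $(0,1)$ (with the horizontal floor-to-floor edges in direction $(1,0)$ and the ``ends'' in direction $(0,1)$), rather than along the three directions $(1,0),(0,1),(-1,-1)$ of the triangle. I expect the main care to be needed in verifying that the propagation rule still forces $\mathcal{S}_e=\mathcal{S}_{e'}$ at the vertices of a floor adjacent to the weight-$2$ edge — i.e.\ that the rectangular fan geometry does not break the parity-independence of $\mathcal{S}_\alpha\subset\mathcal{S}_e$. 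Since the local picture around a trivalent vertex with weight-one edges depends only on the parities of the primitive directions, and the relevant directions are $(1,0),(0,1)$ (and $(1,1)$ at the ``corner'' of a floor), the same case analysis as in Theorem \ref{thm:maximalconfigtropicalcurves} goes through verbatim with $j$ replaced by the appropriate coordinate; I do not anticipate a new phenomenon, only a reindexing of the blocks and a recount matching $6de-4d-4e+4$ in place of $3(d-1)^2$.
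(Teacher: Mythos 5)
Your proposal follows essentially the same route as the paper: fix the signs of a horizontally stretched configuration block by block exactly as in Theorem \ref{thm:maximalconfigtropicalcurves}, then rerun that proof's local propagation check at a weight-$2$ horizontal edge to get real multiplicity equal to complex multiplicity for every floor decomposed curve, so the real count equals the complex count $6de-4d-4e+4$ from Proposition \ref{prop-complexcurvecount}. The only cosmetic difference is that your blocks (two of size $e$ and $d-1$ of size $e+1$) interchange the roles of $d$ and $e$ relative to the paper's choice (two of size $d$ and $e-1$ of size $d+1$), which is immaterial since the count is symmetric and the choice of floor direction is a convention.
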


\begin{proof}
Uninodal curves of bidegree $(d, e)$ in $\mathbb{P}^1 \times
\mathbb{P}^1$ are fixed by $(d+1)(e+1)- 2 = 2d + (d+1)(e-1)$
points. As in the proof of Theorem
\ref{thm:maximalconfigtropicalcurves}, we fix the sign vector of the
floor decomposed point configuration in blocks. We first choose a sign
$\epsilon$ and set $\epsilon' = \epsilon  + (1, 0)$. Then the first
block is of size $d$ and can be chosen arbitrarily. The next $e-1$
blocks are of size $d+1$ and are all the same. They begin with the sign
$\epsilon$, followed again by $\epsilon$ and then alternate until the
$d$-th sign. The last sign of the block is the same as the $d$-th
sign. Finally the last $d$ signs can be chosen arbitrarily. As an
example for $d= 4$ and $e = 3$ we have
$$\epsilon_1,  \epsilon_2, \epsilon_3, \epsilon_4 | \epsilon, \epsilon, \epsilon', \epsilon, \epsilon|    \epsilon, \epsilon, \epsilon', \epsilon, \epsilon| \epsilon_5, \epsilon_6,  \epsilon_7, \epsilon_8.$$
With the signs of the point configuration chosen we can follow the proof of Theorem \ref{thm:maximalconfigtropicalcurves} to establish that the real multiplicity of every real floor decomposed tropical curve is equal to the complex multiplicity, which proves the theorem.
\end{proof}

Bertrand's maximality argument does not generalize beyond the uninodal case. However, he does go further to consider the asymptotic count of real curves in $\mathbb{P}^2$ with a fixed number of nodes \cite{Bertrand}.
Using floor diagrams, Fomin and Mikhalkin  found that the number of complex curves of degree $d$  in $\mathbb{P}^2$ with $\delta$ nodes passing through $d(d+3)/2 -  \delta$ points  in general position is a polynomial  function in $d$ of degree $2 \delta$ \cite{FM}. By considering  real floor diagrams, Bertrand shows that for any choice of signs on the point configuration,  the number of real floor decomposed curves  with $\delta $ nodes counted with real multiplicity behaves as $\Theta(d^{2\delta})$. Furthermore, for a fixed number of nodes $\delta$, Bertrand exhibits a family of real point configurations such that, as $d$ tends to infinity the number of real  curves of degree $d$ passing through the point configuration has the same asymptotic as the complex case.

\section{Floor plans for singular tropical surfaces}\label{nsec3}

Throughout the rest of the paper, we pick a point configuration $\bw= (p_1,\ldots,p_n)$ consisting of $n$ points in $(\K^\ast)^3$, such that their tropicalizations (which we denote by $(q_1,\ldots,q_n)$) are distributed on a line in $\R^3$ with direction $(1,\eta,\eta^2)$, where $0<\eta\ll1$, and with growing distances.

We use $x$, $y$ and $z$ for the coordinates of $\R^3$ and $\pi_{yz}:\R^3\rightarrow \R^2$ for the projection to the $yz$-coordinates.
We let $Q_i=\pi_{yz}(q_i)$ for all $i=1,\ldots,n$.

As in Section \ref{sec-floorplanecurves}, a floor plan contains a tuple of effective divisors, now in $\R^2$ rather than $\R$, thus plane tropical curves.
We introduce  the notion of a string which will be used in our definition of floor plans:
Let $C$ be a plane tropical curve of degree $d$ (resp.\ of bidegree $(d,e)$) satisfying point conditions. Assume that the right or left corner of the Newton polygon appear in the subdivision (resp.\ the upper right or lower left), and that there are no point conditions on the two ends adjacent to the dual vertex. In this case we can prolong the adjacent bounded edge of direction $(1,0)$ or \ $(-1,-1)$ (resp.\ $(1,1)$ or $(-1,-1)$) arbitrarily, without losing the property of passing through the points. We call the union of the two ends a \emph{string} --- a right string if it is dual to the right corner, a left string if it is dual to the left corner.

\begin{definition}\label{def-nodegerm} Let $C$ be a plane tropical curve of degree $d$ (resp.\ of bidegree $(d,e)$) passing through $\binom{d+2}{2}-2$ (resp.\ $(d+1)(e+1)-2$) points in general position. A \emph{node germ} of $C$ is one of the following:
\begin{enumerate}
\item a vertex resp.\ midpoint of an edge of weight $2$, dual to a parallelogram resp.\ a pair of adjacent triangles of area $2$ each sharing an edge of weight $2$ in its Newton subdivision,
\item a horizontal or diagonal (resp.\ horizontal or vertical) end of weight $2$,
\item a right or left string.
\end{enumerate}
\end{definition}

\subsection{Surfaces in $\PP^3$}\label{surf-p3}
Here, $n=\binom{d+3}{3}-1-\delta$ with $0<\delta\le\frac{d}{2}$.

\begin{definition}\label{def-floorplan} A \emph{$\delta$-nodal floor plan $F$ of degree $d$} is a tuple $(C_d,\ldots,C_1)$ of plane tropical curves $C_i$ of degree $i$, together with a choice of indices $d\geq i_{\delta}\geq \ldots\geq i_1\geq 1$, 
such that $i_{j+1}>i_j+1$ for all $j$, satisfying the following properties:
\begin{enumerate}
\item The curve $C_i$ passes through the following points (where we
  set $i_0=0$ and $i_{\delta+1}=d+1$):
\begin{displaymath}
\begin{array}{ll}
\text{if } i_\nu>i>i_{\nu-1}:&\;\; Q_{\sum_{k=i+1}^d\binom{k+2}{2}-\delta+\nu},\ldots,Q_{\sum_{k=i}^d\binom{k+2}{2}-2-\delta+\nu},\\
\text{if } i=i_\nu:&\;\; Q_{\sum_{k=i+1}^d\binom{k+2}{2}-\delta+\nu+1},\ldots,Q_{\sum_{k=i}^d\binom{k+2}{2}-2-\delta+\nu}.\\
\end{array}
\end{displaymath}
\item The plane curves $C_{i_j}$ have a node germ for each $j=1,\ldots,\delta$.
\item \label{leftstringalign}If the node germ of $C_{i_j}$ is a left string, then its horizontal end aligns with a horizontal bounded edge of $C_{i_j+1}$.
\item \label{rightstringalign} If the node germ of $C_{i_j}$ is a right string, then its diagonal end aligns either with a diagonal bounded edge of $C_{i_j-1}$ or with a vertex of $C_{i_j-1}$ which is not adjacent to a diagonal edge.
\item \label{indexd} If $i_\delta=d$ then the node germ of $C_d$ is either a right string or a diagonal end of weight $2$.
\item \label{index1a} If $i_1=1$ then the node germ of $C_1$ is a left string.
\end{enumerate}
\end{definition}
Observe that each $C_i$ with $i\neq i_\nu$ passes through $\binom{i+2}{2}-1$ points, whereas each $C_{i_\nu}$ passes through $\binom{i_\nu+2}{2}-2$ points.

\begin{example}\label{ex-d2delta1} Let $d=2$ and $\delta=1$. A floor plan contains a
  tuple $(C_2,C_1)$ of tropical plane curves, where $C_2$ is a conic
  and $C_1$ a line. Furthermore, one index $i$ belongs to a curve with
  a node germ. If $i=2$, then $C_2$ passes through $Q_1,\ldots,Q_4$. By
  condition (\ref{indexd}) of definition \ref{def-floorplan}, the node
  germ in $C_2$ can either be a right string or a diagonal end of
  weight $2$. The right string would, by condition
  (\ref{rightstringalign}), have to meet a bounded edge of $C_1$ or a
  vertex which is not adjacent to a diagonal edge, which is both impossible. So the node germ at $C_2$ is a weight $2$ diagonal end. The curve $C_1$ passes through $Q_6$ and $Q_7$.
If $i=1$, $C_2$ is a smooth conic passing through $Q_1,
\ldots,Q_5$. The line $C_1$ passes through $Q_7$. By condition (\ref{index1a}), its node germ is a left string, which by condition (\ref{leftstringalign}) aligns with the unique horizontal bounded edge of $C_2$.

Altogether, we have two possible $1$-nodal floor plans of degree $2$, as shown in Figure \ref{fig-exfloorplans}.
The points $Q_5$ and $Q_8$ fix the position of the floors of degree $2$ and $1$, respectively of the surface of degree $2$ in the top floor plan.
In the bottom floor plan it is the points $Q_6$ and $Q_8$ that fix these respective floors.

\begin{figure}
\input{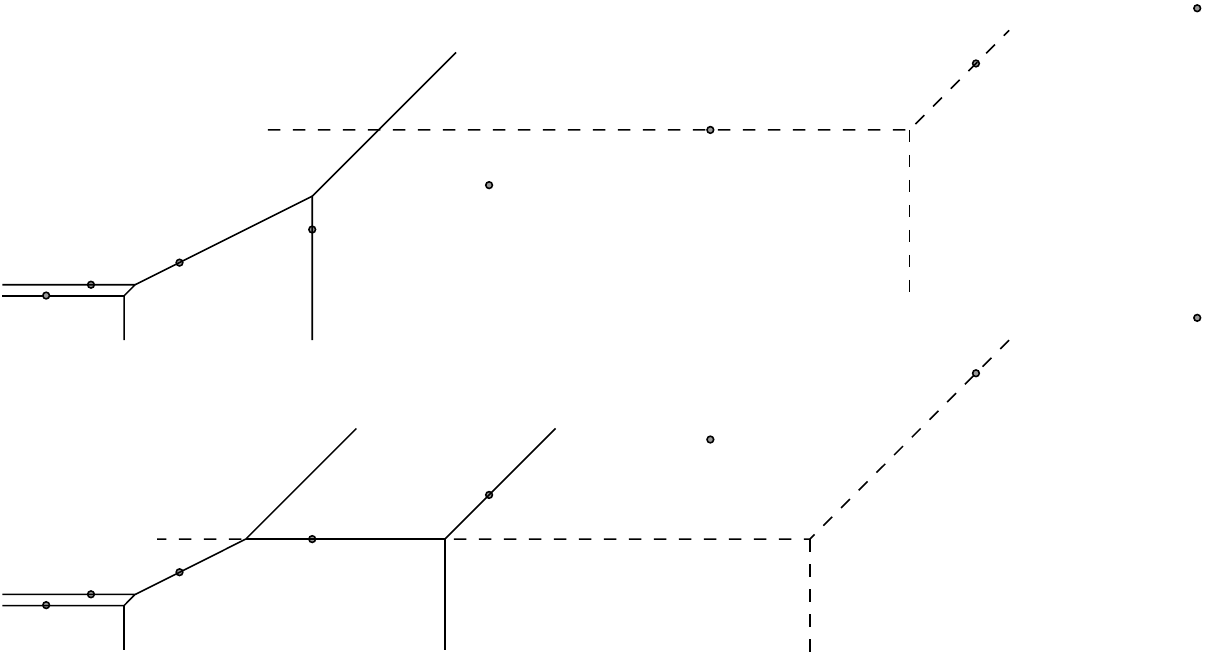_t}
\caption{The $1$-nodal floor plans of degree $2$.}\label{fig-exfloorplans}
\end{figure}

\end{example}

\begin{definition}\label{def-complexmultP3}
Let $F=(C_d,\ldots,C_1)$ be a $\delta$-nodal floor plan of degree $d$.
For each node germ $C^*_{i_j}$ in $C_{i_j}$, we define the following local complex multiplicity $\mult_{\C}(C^*_{i_j})$:
\begin{enumerate}
\item If $C^*_{i_j}$ is dual to a parallelogram, then $\mult_{\C}(C^*_{i_j})=2$.
\item  If $C^*_{i_j}$ is the midpoint of an edge of weight $2$, then $\mult_{\C}(C^*_{i_j})=8$.
\item If $C^*_{i_j}$ is a horizontal end of weight $2$, then $\mult_{\C}(C^*_{i_j})=2\cdot(i_j+1).$
\item If $C^*_{i_j}$ is a diagonal end of weight $2$, then $\mult_{\C}(C^*_{i_j})=2\cdot(i_j-1).$
\item If $C^*_{i_j}$ is a left string, then $\mult_{\C}(C^*_{i_j})=2. $
\item If $C^*_{i_j}$ is a right string whose diagonal end aligns with a diagonal bounded edge, then $\mult_{\C}(C^*_{i_j})= 2.$
\item If $C^*_{i_j}$ is a right string whose diagonal end aligns with a vertex not adjacent to a diagonal edge, then $\mult_{\C}(C^*_{i_j})=1.$
\end{enumerate}

The \emph{complex multiplicity} of a $\delta$-nodal floor plan is defined as the product of the local multiplicities of all of its node germs,
$$\mult_{\C}(F)=\prod_{j=1}^{\delta}\mult_{\C}(C^*_{i_j}),$$
and we set $$N_{\delta,\C}^{\PP^3,\floor}(d)= \sum_F \mult_{\C}(F),$$ where the sum goes over all $\delta$-nodal floor plans of degree $d$.
\end{definition}

\begin{remark}
 Note in Definition \ref{def-complexmultP3}(1) and (2) that if $C^*_{i_j}$ is dual to a parallelogram, or the midpoint of an edge of weight $2$, then $\mult_{\C}(C^*_{i_j})$ equals twice the complex multiplicity of $C_i$, i.e.\ $2$ resp.\ $8$.
\end{remark}

\begin{example}\label{ex-floord2delta1}
Consider again the two one-nodal floor plans of degree $2$ we considered in Example \ref{ex-d2delta1}.
Both floor plans are of multiplicity $2$ by Definition \ref{def-complexmultP3}. Altogether, we have  $ N_{1,\C}^{\PP^3,\floor}(2)= 4$.

\end{example}

\begin{remark}
In Example \ref{ex-floord2delta1} (using Example \ref{ex-d2delta1}), we have determined the number $ N_{1,\C}^{\PP^3,\floor}(2)= 4$. It turns out that in this case $ N_{1,\C}^{\PP^3,\floor}(2)= 4= N_{1,\C}^{\PP^3,\trop}(2)= N_{1,\C}^{\PP^3}(2)$.  In fact, for $\delta=1$, the inequality (\ref{eq-floortrop}) is always an equality. For $d=3$ and $\delta=2$, the inequality (\ref{eq-floortrop}) is strict, as shown in \cite{BG19}. We expect the inequality to be strict as soon as $\delta>1$, since then one expects the existence of tropical surfaces for which the nodes are not apart, which we neglect with our enumeration of floor plans.
\end{remark}

\begin{construction}\label{const-surfaces}
From a $\delta$-nodal floor plan $(C_d,\ldots,C_1)$ together with the points $q_{i}$, we produce a construction pattern as in Section 3 \cite{BBLdM17}: we set $X_i=\R^2$ for all $i$, and the effective divisor $D_i=C_i$. The rational functions $f_i$ are, since they must satisfy $\divi(f_i)=C_i-C_{i-1}$, determined up to a global shift. We fix the shift such that the constructed floor meets the point between the points on $C_{i-1}$ and the points on $C_i$.
\end{construction}

\begin{proposition}\label{prop-floorplandegdlifts}
Construction \ref{const-surfaces} viewed as a map from the set of
$\delta$-nodal floor plans of degree $d$ to the set of tropical
surfaces is injective. It sends a floor plan $F$ to a $\delta$-nodal
tropical surface $S$ of degree $d$ passing through the $q_i$, and we
have $\mult_{\C}(F)=\mult_\C(S)$.

In particular, the inequality (\ref{eq-floortrop}) holds.
\end{proposition}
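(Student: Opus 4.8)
The proof of Proposition~\ref{prop-floorplandegdlifts} splits naturally into three tasks: (i) showing that Construction~\ref{const-surfaces} indeed produces a tropical surface $S$ of degree $d$ which is $\delta$-nodal and passes through the $q_i$; (ii) showing the assignment $F \mapsto S$ is injective; and (iii) computing $\mult_\C(S)$ and matching it with $\mult_\C(F)$. The final sentence, that inequality~(\ref{eq-floortrop}) holds, is then immediate: the injectivity from (ii) exhibits the floor plans as indexing a subset of the tropical surfaces contributing to (\ref{eq-corres}), and since by (iii) the multiplicities agree term by term, summing over $F$ gives at most the sum over all such $S$, i.e.\ $N_{\delta,\C}^{\PP^3,\floor}(d) = \sum_F \mult_\C(F) = \sum_{S \text{ from a floor plan}} \mult_\C(S) \le \sum_S \mult_\C(S) = N_{\delta,\C}^{\PP^3,\trop}(d)$.

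\medskip\noindent\textbf{Details of the three tasks.} For (i), I would argue as in the curve case (proof of the theorem after Construction~\ref{const-curves}, and \cite{BBLdM17, Section 3}): the construction pattern with $X_i = \R^2$, divisors $D_i = C_i$, and rational functions $f_i$ with $\divi(f_i) = C_i - C_{i-1}$ assembles a tropical surface whose floors are (cylinders over) the $C_i$ and whose dual Newton subdivision is controlled by the Newton subdivisions of the $C_i$. The horizontally-stretched / $(1,\eta,\eta^2)$-generic position of the $q_i$ forces this surface to be floor decomposed and forces all three-dimensional cells of the dual subdivision away from the node germs to be unimodular tetrahedra, so the only singular points come from the $\delta$ node germs; since $i_{j+1} > i_j + 1$ the node germs sit on distinct floors (or on distinct floors and the connecting regions) and their local contributions are independent, yielding exactly $\delta$ nodes. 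The alignment conditions (\ref{leftstringalign})--(\ref{rightstringalign}) and the boundary conditions (\ref{indexd}), (\ref{index1a}) are precisely what is needed so that a string node germ really does produce a node (a prolongable edge meeting the neighbouring floor in the correct combinatorial position) rather than destroying the point-interpolation property or degenerating. Counting points: by the displayed incidence rule each $C_i$ with $i \ne i_\nu$ meets $\binom{i+2}{2}-1$ points and each $C_{i_\nu}$ meets $\binom{i_\nu+2}{2}-2$ points, and one checks $\sum_{i=1}^d (\binom{i+2}{2}-1) - \delta = \binom{d+3}{3} - 1 - \delta = n$, so all $n$ points are used and the global shifts of the $f_i$ are uniquely pinned down.

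\medskip\noindent\textbf{Injectivity and the multiplicity comparison.} For (ii), injectivity is seen by reconstructing $F$ from $S$: the floors of $S$ recover the tuple $(C_d,\ldots,C_1)$ as the plane tropical curves they are cylinders over, the indices $i_j$ are read off as the floors carrying a node germ (a parallelogram/weight-two feature, a weight-two end, or a prolongable string), and the point-incidence data is determined by $S$; since distinct floor plans differ either in some $C_i$, in the index tuple, or in the type/location of a node germ, they produce distinct surfaces. For (iii), the local complex multiplicity of the tropical surface $S$ at each of its nodes is a local lifting count determined by the dual cell(s) around that node; these are exactly the local models listed in Definition~\ref{def-complexmultP3}, and the values $2, 8, 2(i_j+1), 2(i_j-1), 2, 2, 1$ are obtained by solving the corresponding local lifting equations for the initials as in \cite{IMS09, MMS15} — e.g.\ the parallelogram and weight-two-edge cases give twice the usual lifting multiplicity of the plane curve feature, and the weight-two-end and string cases give counts proportional to the number of lattice points / edges the end or string can attach to. Because $\mult_\C(S)$ is multiplicative over its nodes and the nodes are in bijection with the node germs of $F$ with matching local multiplicities, $\mult_\C(S) = \prod_j \mult_\C(C^*_{i_j}) = \mult_\C(F)$.

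\medskip\noindent\textbf{Main obstacle.} The routine part is the bookkeeping in (i) — the binomial identities and the verification that the $q_i$ in generic position force floor decomposition and unimodularity away from the node germs. The genuinely delicate part is (iii): justifying that the local lifting multiplicity of a tropical surface at a node of each listed type equals the stated number. This requires setting up and solving the patchworking / local lifting equations (the initials of the surface equation and of the node coordinates) in each of the seven local models, and in particular checking that a string node germ, which is a non-compact and ``non-local'' feature, nonetheless contributes a well-defined local factor $1$ or $2$ once the alignment condition is imposed; for the weight-two ends one must also check the count $2(i_j\pm1)$ against the number of admissible lifts. I would either carry this out model-by-model or cite the corresponding computations in \cite{MMS15, IMS09}, treating the surface node as the same local picture that appears there.
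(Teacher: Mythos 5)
Your proposal is correct and follows essentially the same route as the paper: the paper's proof likewise treats the interpolation property and injectivity as immediate from the construction, and its substance is exactly your step (iii), a case-by-case identification of the local dual-subdivision model for each of the seven node germ types (bipyramid over a parallelogram, pyramids over the weight-$2$ edges and ends, parallelogram plus bipyramid for aligned strings, pentatope for a string meeting a vertex) with the lifting counts cited from the corresponding lemmas of \cite{MMS15}. The only difference is that the paper names these local polytopal configurations and the specific lemmas (A.1, A.2, A.3, A.7, A.10 of \cite{MMS15}) explicitly, which is the ``model-by-model'' execution you deferred to citation.
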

\begin{proof}
By construction, we obtain a tropical surface $S$ passing through the $q_i$ from a floor plan $F$. It is also obvious that the map is injective. We have to show that $S$ is $\delta$-nodal.
Each node germ $C^*_{i_j}$ is in charge of a tropical node:
\begin{enumerate}
\item If $C^*_{i_j}$ is dual to a parallelogram, the subdivision of $S$ contains a bipyramide over the parallelogram. The midpoint of the edge dual to the parallelogram is the tropicalization of a node, and there are $$\mult_{\C}(C^*_{i_j})=2$$ many ways to pick initials for the local equation of a complex surface tropicalizing to $S$ (see Lemma A.7 (1) and (2) in \cite{MMS15}).

\item If $C^*_{i_j}$ is the midpoint of an edge of weight $2$, its dual is an edge of weight $2$ adjacent to two triangles. In the subdivision dual to $S$, we have altogether $4$ bypyramides over the two triangles. According to Lemma A.1 \cite{MMS15}, there are $\mult_{\C}(C^*_{i_j})=8$ ways to pick complex lifts.
\item If $C^*_{i_j}$ is a horizontal end of weight $2$, the dual Newton subdivision of $S$ contains many pyramides involving the dual edge: one for each intersection of a diagonal end of $C_{i_j+1}$ with the edge of weight $2$. As in Lemma A.2 \cite{MMS15}, we can pick each of those intersections to fix a tropical node, and we have $2$ lifts for each choice. Altogether, the local complex multiplicity equals the number of complex lifts.
\item If $C^*_{i_j}$ is a diagonal end of weight $2$, the situation is analogous: we get to pick an intersection point with a horizontal end of $C_{i_j-1}$, and for each we have $2$ complex lifts, see Lemma A.3 \cite{MMS15}.
\item If $C^*_{i_j}$ is a left string, it has to align with a horizontal bounded edge.
The dual Newton subdivision of $S$ contains a parallelogram formed by the dual of the horizontal string edge, and the edge with which it aligns. The two vertices dual to the triangles that appear dual to $C_{i_j-1}$ form a bipyramide with the square. We have $2$ complex lifts for the coefficients according to Lemma A.7 \cite{MMS15}.
\item The same holds if $C^*_{i_j}$ is a right string whose diagonal end aligns with a diagonal bounded edge.
\item If $C^*_{i_j}$ is a right string whose diagonal end aligns with a vertex not adjacent to a diagonal edge, then the dual Newton subdivision of $S$ contains a pentatope formed by the dual of the diagonal end and the dual of the vertex with which it aligns. There is one lift according to Lemma A.10 \cite{MMS15}.
\end{enumerate}
\end{proof}

We now want to investigate the real situation. For this we
fix the sign vector with all signs positive, $s=((+)^3)^n$.
\begin{definition}\label{def-realmult1P3}
Let $F=(C_d,\ldots,C_1)$ be a $\delta$-nodal floor plan of degree $d$.
For each node germ $C^*_{i_j}$ in $C_{i_j}$, we define the following local real multiplicity $\mult_{\R,s}(C^*_{i_j})$:
\begin{enumerate}
\item If $C^*_{i_j}$ is dual to a parallelogram, it depends on the position of the parallelogram in the Newton subdivision:
\begin{itemize}
\item assume it has the vertices $(k,0)$, $(k,1)$, $(k-1,l)$ and $(k-1,l+1)$, then \begin{displaymath} \mult_{\R,s}(C^*_{i_j})=\begin{cases}2\\ 0\end{cases}\mbox{ if }\;\;\;\frac{1}{2}\cdot (3i_j+2+2k+2l)(i_j-1) \equiv \begin{cases} 1\\0\end{cases}\mbox{ modulo }2,\end{displaymath}
\item assume it has the vertices $(k,d-i_j-k)$, $(k,d-i_j-k-1)$, $(k+1,l)$ and $(k+1,l+1)$, then
\begin{displaymath} \mult_{\R,s}(C^*_{i_j})=\begin{cases}2\\ 0\end{cases}\mbox{ if }\;\;\;\frac{1}{2}\cdot (i_j+2+2l)(i_j-1) \equiv \begin{cases} 1\\0\end{cases}\mbox{ modulo }2.\end{displaymath}
 \end{itemize}
\item If $C^*_{i_j}$ is the midpoint of an edge of weight $2$, then $ \mult_{\R,s}(C^*_{i_j})=0.$
\item If $C^*_{i_j}$ is a horizontal end of weight $2$, then $\mult_{\R,s}(C^*_{i_j})=0.$
\item If $C^*_{i_j}$ is a diagonal end of weight $2$, then $\mult_{\R,s}(C^*_{i_j})=2 \cdot (i_j-1).$
\item If $C^*_{i_j}$ is a left string, then it depends on the position of the dual of the horizontal bounded edge of $C_{i_j+1}$ with which it aligns. Assume it has the vertices $(k,l)$ and $(k,l+1)$.
Then \begin{displaymath} \mult_{\R,s}(C^*_{i_j})=\begin{cases}2\\ 0\end{cases}\mbox{ if }\;\;\; i_j-k \equiv \begin{cases} 0\\1\end{cases}\mbox{ modulo }2.\end{displaymath}
\item If $C^*_{i_j}$ is a right string whose diagonal end aligns with a diagonal bounded edge, then $\mult_{\R,s}(C^*_{i_j})=0.$
\item If $C^*_{i_j}$ is a right string whose diagonal end aligns with a vertex not adjacent to a diagonal edge, then $ \mult_{\R,s}(C^*_{i_j})=1.$
\end{enumerate}

The \emph{real multiplicity} of a $\delta$-nodal floor plan is defined as the product of the local multiplicities of all of its node germs,
$$\mult_{\R,s}(F)=\prod_{j=1}^{\delta}\mult_{\R,s}(C^*_{i_j}),$$
and we let $ N_{\delta,\R,s}^{\PP^3,\floor}(d)$ be the sum of all $\delta$-nodal floor plans of degree $d$, counted with real multiplicity.
\end{definition}

\begin{example}
Revisit Example \ref{ex-d2delta1}, but now consider the real multiplicities of the two one-nodal floor plans of degree $2$. According to Definition \ref{def-realmult1P3}(4), the first floor plan has real multiplicity $2$. For the second floor plan, we use Definition \ref{def-realmult1P3}(5). Here $i_j=1$, $(k,l)=(1,0)$, so we obtain real multiplicity $2$ again. Altogether, we have  $ N_{1,\R,s}^{\PP^3,\floor}(2)=4$.
\end{example}

\begin{proposition}\label{prop-floorplandegdreallifts}
Given a floor plan $F$, the $\delta$-nodal tropical surface $S$ of degree $d$ passing through the $q_i$ which is  obtained  by Construction \ref{const-surfaces} has at least $\mult_{\R,s}(F) $ real lifts passing through the lifts $\bw$ of the $q_i$ with sign vector $s=((+)^3)^n$.
In particular, inequality (\ref{eq-floortropreal}) holds.
\end{proposition}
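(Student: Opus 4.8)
The plan is to run the argument of Proposition~\ref{prop-floorplandegdlifts} once more, but now producing \emph{real} lifts: at each node germ we replace ``number of complex solutions of the local lifting equation'' by ``number of real solutions compatible with the sign vector $s=((+)^3)^n$'', and we show that this local number is at least the local real multiplicity $\mult_{\R,s}(C^*_{i_j})$ of Definition~\ref{def-realmult1P3}. From Proposition~\ref{prop-floorplandegdlifts} we already know that $S$ is a $\delta$-nodal tropical surface of degree $d$ through the $q_i$; moreover, the conditions $i_{j+1}>i_j+1$ in Definition~\ref{def-floorplan} force the $\delta$ node germs to occupy well-separated parts of the Newton subdivision dual to $S$, so the patchworking results of \cite{MMS15,IMS09} decouple the lifting problem. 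Concretely, the valuations of the coefficients of a defining polynomial are prescribed by $S$; the genericity of the points pins down almost all initial coefficients over $\K_\R$, with signs forced by $s$ to be positive on the point-constrained monomials; and the residual freedom near each $C^*_{i_j}$ is cut out by an explicit initial lifting equation. Any choice of a real solution of each of these $\delta$ equations extends to a genuine real lift of $S$ passing through $\bw$, and inequivalent local choices yield distinct lifts; hence the number of real lifts of $S$ is at least $\prod_{j=1}^{\delta}(\text{admissible real solutions at }C^*_{i_j})$.

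It then remains to go through the seven node-germ types and match them with the lemmas of the appendix of \cite{MMS15} used in the complex case. For a node germ dual to a parallelogram (Lemma~A.7 of \cite{MMS15}) the initial lifting equation is a quadratic; one traces the signs of its coefficients --- determined by the chain $\divi(f_i)=C_i-C_{i-1}$, by the shift normalization of Construction~\ref{const-surfaces}, and by the placement of the $q_i$ along the line of direction $(1,\eta,\eta^2)$ with all point signs positive --- and checks that its discriminant is positive exactly when the displayed parity condition on $(i_j,k,l)$ holds, giving the $2$ real solutions of Definition~\ref{def-realmult1P3}(1) and $0$ otherwise. The same kind of computation handles a left string (again Lemma~A.7): the relevant quadratic is real-solvable exactly when $i_j-k$ is even. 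For a diagonal end of weight $2$ (Lemma~A.3) the situation is as over $\C$: there are $i_j-1$ admissible intersection points that fix the node, and for each the lifting equation has $2$ real solutions irrespective of signs, so all $2(i_j-1)$ complex lifts are already real. For a right string whose diagonal end aligns with a vertex not adjacent to a diagonal edge (Lemma~A.10) the lifting equation has a unique solution, which is automatically real. In the three remaining cases --- the midpoint of an edge of weight $2$ (Lemma~A.1), a horizontal end of weight $2$ (Lemma~A.2), and a right string whose diagonal end aligns with a diagonal bounded edge (Lemma~A.7) --- the sign of the quantity governing real solvability is \emph{not} fixed by the point data together with $s$ (it depends on parameters of the lift that the generic interpolation does not constrain), so we conservatively record $0$, in agreement with Definition~\ref{def-realmult1P3}(2),(3),(6); this can only weaken the bound. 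Multiplying the local lower bounds gives at least $\mult_{\R,s}(F)$ real lifts of $S$. Since, by the injectivity in Proposition~\ref{prop-floorplandegdlifts}, distinct floor plans yield distinct surfaces $S$, summing over all $\delta$-nodal floor plans $F$ of degree $d$ and using $N_{\delta,\R}^{\PP^3}(d,\bw)=N_{\delta,\R,\bw}^{\PP^3,\trop}(d,\bw)$ yields inequality~(\ref{eq-floortropreal}).

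The main obstacle is the sign bookkeeping. Over $\C$ one only needs the \emph{number} of nonzero solutions of each initial system, whereas over $\R$ one must compute the actual signs of every initial coefficient entering each local quadratic and then translate the positivity of a discriminant into the parity statements of Definition~\ref{def-realmult1P3} --- in particular understanding, for a parallelogram with vertices $(k,0),(k,1),(k-1,l),(k-1,l+1)$ (and its mirror variant) or for a left string aligned along $(k,l),(k,l+1)$, how the stretched point configuration and the fixed shifts of the $f_i$ propagate signs through the floor-composed structure. A secondary point is to confirm that the decoupling of the $\delta$ lifting equations is valid over $\K_\R$ and not merely over $\K$, i.e.\ that the $\delta$ real local solutions can be glued \emph{simultaneously}; this follows from the same Newton-polygon and implicit-function argument as in \cite{MMS15}, since the Jacobians involved are unchanged upon passing to the real subfield.
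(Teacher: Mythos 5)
Your proposal follows essentially the same route as the paper: a node-germ-by-node-germ comparison of the local real lifting counts with the local multiplicities of Definition \ref{def-realmult1P3}, relying on the patchworking results of \cite{MMS15} (the paper cites its real lifting statements, Lemmas 5.4, 5.6(3), 5.7, 5.9 and Section 5.5, rather than redoing the sign analysis of the local quadratics), setting the multiplicity to $0$ in the uncontrolled cases so the ``at least'' bound holds trivially there, and multiplying the local counts to get the global bound and inequality (\ref{eq-floortropreal}). The only cosmetic difference is that for a right string aligning with a diagonal bounded edge the paper justifies the value $0$ by the lower-order contribution (Remark A.9 of \cite{MMS15}) rather than by unknown signs, which does not affect the validity of the lower bound.
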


\begin{proof}
We compute numbers of lifts for the coefficients of a lift of $S$ near each node germ $C^*_{i_j}$:
\begin{enumerate}
\item If $C^*_{i_j}$ is dual to a parallelogram, the real local multiplicity equals the number of ways to pick real lifts, see Lemma 5.7 \cite{MMS15}. (Notice that case (2) of Lemma 5.7 \cite{MMS15} actually gives a circuit consisting of three collinear points and not a parallelogram \cite{Sin19}.)
\item[(2), (3)] If $C^*_{i_j}$ is the midpoint of an edge of weight $2$ or a horizontal end of weight $2$, the number of real lifts is hard to control (see Section 5.5 \cite{MMS15}), which is why we set the real local multiplicity to zero.
\item[(4)] If $C^*_{i_j}$ is a diagonal edge of weight $2$, we get to pick an intersection point with a horizontal end of $C_{i_j-1}$, and each of the $2$ complex lifts, is real, see Lemma 5.9 \cite{MMS15}.
\item[(5)] If $C^*_{i_j}$ is a left string, then the number of real lifts depends on the position of the dual of the horizontal bounded edge of $C_{i_j+1}$ with which it aligns. Assume it has the vertices $(k,l)$ and $(k,l+1)$.
Then there are \begin{displaymath} \mult_{\R,s}(C^*_{i_j})=\begin{cases}2\\ 0\end{cases}\mbox{ if }\;\;\; i_j-k \equiv \begin{cases} 0\\1\end{cases}\mbox{ modulo }2\end{displaymath} real lifts, see Lemma 5.6 (3) \cite{MMS15}.
\item[(6)] If $C^*_{i_j}$ is a right string whose diagonal end aligns with a diagonal bounded edge, then the contribution is of lower order in $d$ (see Remark A.9 \cite{MMS15}), so we can safely disregard them and set $ \mult_{\R,s}(C^*_{i_j})=0$ when we care for bounds for the asymptotic count.
\item[(7)] If $C^*_{i_j}$ is a right string whose diagonal end aligns with a vertex not adjacent to a diagonal edge, then there are $\mult_{\R,s}(C^*_{i_j})=1$ real lifts, see Lemma 5.4 \cite{MMS15}.
\end{enumerate}
\end{proof}

\subsection{Surfaces in $\PP^1\times \PP^2$}
Throughout this section we suppose that the integers $d,e,\delta$ satisfy $0<\delta\le\frac{1}{2}\min\{d,e\}$.
We also let $n=(d+1)\cdot \binom{e+2}{2}-1-\delta$.

Note that, under the above assumptions, the family of $\delta$-nodal surfaces of bidegree $(d,e)$ in
$\PP^1\times\PP^2$ has codimension $\delta$ in the space of all surfaces of bidegree $(d,e)$, and similarly to formula (\ref{emn1}) we have
\begin{equation}N_{\delta,\C}^{\PP^1\times\PP^2}(d,e)=
\frac{(12(d-1)(e-1)^2)^\delta}{\delta!}+O(d^{\delta-2}e^{2\delta})
+O(d^{\delta-1}e^{2\delta-1})+O(d^\delta e^{2\delta-2})\ .\label{emn1a}\end{equation}

\begin{definition}\label{def-floorplanP1P2} A \emph{$\delta$-nodal floor plan $F$ of bidegree $(d,e)$} is a tuple $(C_d,\ldots,C_0)$ of plane tropical curves $C_i$ of degree $e$, together with a choice of indices $d\geq i_\delta\geq \ldots\geq i_1\geq 0$, such that $i_{j+1}>i_j+1$ for all $j$, satisfying the following properties:
\begin{enumerate}
\item The curve $C_i$ passes through the following points  (where we
  set $i_0=-1$ and $i_{\delta+1}=d+1$):
\begin{displaymath}
\begin{array}{ll}
\text{if } i_{\nu}>i>i_{\nu-1}:& Q_{(d-i)\binom{e+2}{2}-\delta+\nu},\ldots,Q_{(d+1-i)\binom{e+2}{2}-\delta+\nu-2},\\
\text{if } i=i_\nu:& Q_{(d-i)\binom{e+2}{2}-\delta+\nu+1},\ldots,Q_{(d+1-i)\binom{e+2}{2}-\delta+\nu-2}.
\end{array}
\end{displaymath}
\item The plane curves $C_{i_j}$ have a node germ for each $j=1,\ldots,\delta$.
\item \label{leftstringaligna}If the node germ of $C_{i_j}$ is a left string, then its horizontal end aligns with a horizontal bounded edge of $C_{i_j+1}$.
\item \label{rightstringaligna} If the node germ of $C_{i_j}$ is a right string, then its diagonal end aligns either with a diagonal bounded edge of $C_{i_j-1}$ or with a vertex of $C_{i_j-1}$ which is not adjacent to a diagonal edge.
\item \label{indexdb} If $i_1=d$ then the node germ of $C_d$ is either a right string or a diagonal end of weight $2$.
\item \label{index1b} If $i_{\delta}=1$ then the node germ of $C_1$ is either a left string or a horizontal end of weight $2$.
\end{enumerate}
\end{definition}
Observe that each $C_i$ with $i\neq i_\nu$ passes through $\binom{e+2}{2}-1$ points, whereas each $C_{i_\nu}$ passes through $\binom{e+2}{2}-2$ points.

\begin{definition}\label{def-complexmultP2P1}
Let $F=(C_d,\ldots,C_0)$ be a $\delta$-nodal floor plan of bidegree $(d,e)$.
For each node germ $C^*_{i_j}$ in $C_{i_j}$, we define the following local complex multiplicity $\mult_{\C}(C^*_{i_j})$:
\begin{enumerate}
\item If $C^*_{i_j}$ is dual to a parallelogram, then $\mult_{\C}(C^*_{i_j})=2$.
\item If $C^*_{i_j}$ is the midpoint of an edge of weight $2$, then $\mult_{\C}(C^*_{i_j})=8$.
\item If $C^*_{i_j}$ is a horizontal end of weight $2$, then $\mult_{\C}(C^*_{i_j})=2e.$
\item  If $C^*_{i_j}$ is a diagonal end of weight $2$, then $\mult_{\C}(C^*_{i_j})=2e.$
\item If $C^*_{i_j}$ is a left string, then $\mult_{\C}(C^*_{i_j})=2. $
\item If $C^*_{i_j}$ is a right string whose diagonal end aligns with a diagonal bounded edge, then $\mult_{\C}(C^*_{i_j})= 2.$
\item If $C^*_{i_j}$ is a right string whose diagonal end aligns with a vertex not adjacent to a diagonal edge, then $\mult_{\C}(C^*_{i_j})=1.$
\end{enumerate}

The \emph{complex multiplicity} of a $\delta$-nodal floor plan is defined as the product of the local multiplicities of all of its node germs,
$$\mult_{\C}(F)=\prod_{j=1}^{\delta}\mult_{\C}(C^*_{i_j}),$$
and we set $$N_{\delta,\C}^{\PP^1\times\PP^2,\floor}(d,e)= \sum_F \mult_{\C}(F),$$ where the sum goes over all $\delta$-nodal floor plans of bidegree $(d,e)$.
\end{definition}

\begin{proposition}\label{prop-floorplandegdelifts}
Construction \ref{const-surfaces} viewed as a map from the set of
$\delta$-nodal floor plans of bidegree $(d,e)$ to the set of tropical
surfaces is injective. It sends a floor plan $F$ to a $\delta$-nodal
tropical surface $S$ of bidegree $(d,e)$ passing through the $q_i$,
and we have $\mult_{\C}(F)=\mult_\C(S)$.
In particular,

\begin{equation} N_{\delta,\C}^{\PP^1\times\PP^2,\floor}(d,e)\leq N_{\delta,\C}^{\PP^1\times\PP^2,\trop}(d,e).\end{equation}

\end{proposition}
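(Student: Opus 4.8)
The plan is to follow the proof of Proposition~\ref{prop-floorplandegdlifts} essentially verbatim, changing only the combinatorial bookkeeping to reflect that here every floor carries a plane tropical curve of the \emph{same} degree $e$. First I would check that Construction~\ref{const-surfaces}, applied to a $\delta$-nodal floor plan $F=(C_d,\ldots,C_0)$ of bidegree $(d,e)$, yields a tropical surface $S$ of bidegree $(d,e)$: the $d+1$ copies $X_i=\R^2$ are glued along walls governed by rational functions $f_i$ with $\divi(f_i)=C_i-C_{i-1}$, and since all $C_i$ have degree $e$ one may take $f_i$ to be the tropical quotient of tropical polynomials cutting out $C_i$ and $C_{i-1}$, unique up to the shift pinned down by the point condition lying between the points on $C_{i-1}$ and those on $C_i$. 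Reading off the index formula in Definition~\ref{def-floorplanP1P2}(1) one verifies that each $q_i$ is incident to exactly one floor and lies on $S$, so $S$ passes through $q_1,\ldots,q_n$. Injectivity is clear, since slicing $S$ along its floor structure recovers the curves $C_i$, the indices $i_j$, and hence $F$.

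Next I would show that $S$ is $\delta$-nodal and that $\mult_{\C}(F)=\mult_{\C}(S)$. Exactly as in Proposition~\ref{prop-floorplandegdlifts}, each node germ $C^*_{i_j}$ is in charge of one tropical node of $S$, and the number of admissible choices of initials for a complex surface tropicalizing to $S$ near that node is computed by the local patchworking lemmas of \cite{MMS15}. Those lemmas are purely local, so the cases of a parallelogram (Lemma~A.7), of the midpoint of a weight-$2$ edge (Lemma~A.1), of a left string and of a right string aligning with a diagonal bounded edge (Lemma~A.7), and of a right string aligning with a vertex not adjacent to a diagonal edge (Lemma~A.10) carry over unchanged and produce the local multiplicities $2$, $8$, $2$, $2$, $1$ of Definition~\ref{def-complexmultP2P1}. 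The only change occurs for a horizontal (resp.\ diagonal) end of weight $2$: by Lemma~A.2 (resp.\ A.3) of \cite{MMS15} its local multiplicity is $2$ times the number of relevant unbounded rays of the neighbouring floor $C_{i_j+1}$ (resp.\ $C_{i_j-1}$) met by this end, and since that curve has degree $e$ this number is $e$ rather than $i_j\pm1$, giving $2e$ as in Definition~\ref{def-complexmultP2P1}(3),(4). Taking the product over the $\delta$ node germs gives $\mult_{\C}(S)=\prod_{j=1}^{\delta}\mult_{\C}(C^*_{i_j})=\mult_{\C}(F)$, and in particular $S$ is $\delta$-nodal. The stated inequality then follows at once: Construction~\ref{const-surfaces} realizes the $\delta$-nodal floor plans of bidegree $(d,e)$ as a subset of the tropical surfaces entering the sum defining $N_{\delta,\C}^{\PP^1\times\PP^2,\trop}(d,e)$, with matching multiplicities, so summation yields $N_{\delta,\C}^{\PP^1\times\PP^2,\floor}(d,e)\le N_{\delta,\C}^{\PP^1\times\PP^2,\trop}(d,e)$.

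The main obstacle I anticipate is the first step: confirming that the index ranges in Definition~\ref{def-floorplanP1P2}(1) are exactly consistent with Construction~\ref{const-surfaces}. Concretely, one must check that each $C_i$ with $i\ne i_\nu$ receives $\binom{e+2}{2}-1$ points --- one short of rigidifying a degree-$e$ plane curve, the missing degree of freedom being absorbed by the inter-floor shift --- while each $C_{i_\nu}$ receives $\binom{e+2}{2}-2$ points, the node germ soaking up the remaining degree of freedom; that no $q_i$ is used twice and all shifts can be chosen simultaneously; and that conditions (3)--(6) of Definition~\ref{def-floorplanP1P2} force the node germs on the extreme floors $C_d$ and $C_0$ (or $C_1$) to be absorbed into a neighbouring floor rather than sprouting spurious ends of $S$. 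This is entirely parallel to the verification already carried out in the curve case of Section~\ref{sec-floorplanecurves} and in Proposition~\ref{prop-floorplandegdlifts}, and I would transfer it with the obvious modifications.
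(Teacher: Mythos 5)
Your proposal is correct and follows essentially the same route as the paper, which simply declares the proof analogous to Proposition \ref{prop-floorplandegdlifts} with the sole modification that a horizontal or diagonal end of weight $2$ now meets $e$ edges of the adjacent floor, yielding the local multiplicity $2e$. Your additional bookkeeping about point conditions and injectivity is a harmless elaboration of the same argument.
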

\begin{proof}
The proof is analogous to Proposition \ref{prop-floorplandegdlifts}. The only difference is that a horizontal or diagonal end of weight $2$ intersects $e$ edges of the previous or next floor, respectively.
\end{proof}

Again we want to investigate the real situation and fix the all positive sign vector,   $s=((+)^3)^n$.

For the following, we assume $e\equiv0\mod4$. This restriction is important for case (1) of Definition \ref{def-realmult1P1P2} below.

\begin{definition}\label{def-realmult1P1P2}
Let $F=(C_d,\ldots,C_0)$ be a $\delta$-nodal floor plan of bidegree $(d,e)$ where $e\equiv0\mod4$.
For each node germ $C^*_{i_j}$ in $C_{i_j}$, we define the following local real multiplicity $\mult_{\R,s}(C^*_{i_j})$:
\begin{enumerate}
\item If $C^*_{i_j}$ is dual to a parallelogram, 
then $ \mult_{\R,s}(C^*_{i_j})=2$.
\item If $C^*_{i_j}$ is the midpoint of an edge of weight $2$, then $ \mult_{\R,s}(C^*_{i_j})=0$.
\item If $C^*_{i_j}$ is a horizontal end of weight $2$, then $\mult_{\R,s}(C^*_{i_j})=0$.
\item If $C^*_{i_j}$ is a diagonal end of weight $2$, then $\mult_{\R,s}(C^*_{i_j})=2e$.
\item If $C^*_{i_j}$ is a left string, then it depends on the position of the dual of the horizontal bounded edge of $C_{i_j+1}$ with which it aligns. Assume it has the vertices $(k,l)$ and $(k,l+1)$.
Then \begin{displaymath} \mult_{\R,s}(C^*_{i_j})=\begin{cases}2\\ 0\end{cases}\mbox{ if }\;\;\; i_j-k \equiv \begin{cases} 0\\1\end{cases}\mbox{ modulo }2.\end{displaymath}
\item If $C^*_{i_j}$ is a right string whose diagonal end aligns with a diagonal bounded edge, then $\mult_{\R,s}(C^*_{i_j})=0.$
\item If $C^*_{i_j}$ is a right string whose diagonal end aligns with a vertex not adjacent to a diagonal edge, then $ \mult_{\R,s}(C^*_{i_j})=1.$
\end{enumerate}

The \emph{real multiplicity} of a $\delta$-nodal floor plan is defined as the product of the local multiplicities of all of its node germs,
$$\mult_{\R,s}(F)=\prod_{j=1}^{\delta}\mult_{\R,s}(C^*_{i_j}),$$
and we set $$N_{\delta,\R, s}^{\PP^1\times\PP^2,\floor}(d,e)= \sum_F \mult_{\R, s}(F),$$ where the sum goes over all $\delta$-nodal floor plans of bidegree $(d,e)$.

\end{definition}

\begin{proposition}\label{prop-floorplandegdrealliftsP1P2}
Given a floor plan $F$, the $\delta$-nodal tropical surface $S$ of bidegree $(d,e)$, where $e\equiv0\mod4$, passing through the $q_i$ 
has at least $\mult_{\R,s}(F) $ real lifts.
In particular, $$N_{\delta,\R,s}^{\PP^1\times\PP^2,\floor}(d,e)\leq N_{\delta,\R}^{\PP^1\times\PP^2,\trop}((d,e),\bw)\ .$$
\end{proposition}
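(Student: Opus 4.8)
The plan is to mirror the argument of Proposition~\ref{prop-floorplandegdreallifts}, carrying out the same node-germ-by-node-germ analysis but now in the bidegree $(d,e)$ setting with the extra hypothesis $e\equiv 0\bmod 4$. As in the proof of Proposition~\ref{prop-floorplandegdelifts}, Construction~\ref{const-surfaces} produces from $F$ a $\delta$-nodal tropical surface $S$ of bidegree $(d,e)$ through the $q_i$, and the lifting problem localizes: a real lift of $S$ through the real points $\bw$ with sign vector $s=((+)^3)^n$ is obtained by independently solving the local patchworking equations near each of the $\delta$ node germs $C^*_{i_j}$, so that the total number of real lifts is the product over $j$ of the number of local real solutions. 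It therefore suffices to show that near each node germ the number of real solutions is bounded below by the local multiplicity $\mult_{\R,s}(C^*_{i_j})$ specified in Definition~\ref{def-realmult1P1P2}, and then take the product.

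The cases are handled exactly as in Proposition~\ref{prop-floorplandegdreallifts}, invoking the corresponding lemmas of \cite{MMS15} (and the observation that over $\PP^1\times\PP^2$ a horizontal or diagonal end of weight $2$ meets $e$ edges of the adjacent floor, as already noted in the proof of Proposition~\ref{prop-floorplandegdelifts}):
\begin{enumerate}
\item For a parallelogram, Lemma~5.7 \cite{MMS15} gives the count; the point is that under $e\equiv0\bmod4$ the parity condition governing whether one gets $2$ or $0$ real lifts is always satisfied, so $\mult_{\R,s}(C^*_{i_j})=2$ unconditionally (this is precisely why the congruence hypothesis on $e$ is imposed).
\item[(2),(3)] For the midpoint of a weight-$2$ edge or a horizontal end of weight $2$, the real count cannot be controlled (Section~5.5 \cite{MMS15}), so we set the local multiplicity to $0$, which is trivially a lower bound.
\item[(4)] For a diagonal end of weight $2$ we pick one of the $e$ intersection points with a horizontal end of $C_{i_j-1}$, and each of the $2$ complex lifts is real by Lemma~5.9 \cite{MMS15}, giving $2e$.
\item[(5)] For a left string, Lemma~5.6(3) \cite{MMS15} yields $2$ or $0$ real lifts according to the parity of $i_j-k$, where $(k,l),(k,l+1)$ are the vertices of the dual edge of $C_{i_j+1}$ it aligns with.
\item[(6)] For a right string aligning with a diagonal bounded edge, the contribution is of lower order (Remark~A.9 \cite{MMS15}), so we set it to $0$.
\item[(7)] For a right string aligning with a vertex not adjacent to a diagonal edge, Lemma~5.4 \cite{MMS15} gives exactly $1$ real lift.
\end{enumerate}
Multiplying these local lower bounds over $j=1,\dots,\delta$ gives at least $\mult_{\R,s}(F)$ real lifts of $S$ through $\bw$. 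Summing over all $\delta$-nodal floor plans $F$ of bidegree $(d,e)$ and using that distinct floor plans yield distinct tropical surfaces $S$ (injectivity of Construction~\ref{const-surfaces}, Proposition~\ref{prop-floorplandegdelifts}), together with the correspondence $N_{\delta,\R}^{\PP^1\times\PP^2,\trop}((d,e),\bw)=\sum_S\mult_{\R,\bw}(S)$, yields the stated inequality $N_{\delta,\R,s}^{\PP^1\times\PP^2,\floor}(d,e)\leq N_{\delta,\R}^{\PP^1\times\PP^2,\trop}((d,e),\bw)$.

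The main obstacle — and the only place where the bidegree-$(d,e)$ case genuinely differs from the $\PP^3$ case rather than being a mechanical translation — is case (1): verifying that the parity expression in Lemma~5.7 \cite{MMS15} that decides between $2$ and $0$ real lifts for a parallelogram node germ is automatically even when $e\equiv0\bmod4$, for \emph{every} admissible position of the parallelogram in the Newton subdivision of a bidegree-$(d,e)$ floor plan. This requires tracking how the relevant exponents of the local lifting equation depend on $e$ and on the coordinates $(k,l)$ of the parallelogram, and checking the divisibility uniformly; it is the reason the hypothesis $e\equiv0\bmod4$ appears, and it is the step I would write out in full detail. Everything else is a routine invocation of the quoted lemmas together with the same multiplicativity-of-local-lifts principle already used in Proposition~\ref{prop-floorplandegdreallifts}.
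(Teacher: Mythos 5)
Your overall architecture matches the paper's proof: cases (2)--(7) of Definition \ref{def-realmult1P1P2} are indeed handled exactly as the corresponding cases (2)--(7) of Definition \ref{def-realmult1P3} were in Proposition \ref{prop-floorplandegdreallifts}, by citing the same lemmas of \cite{MMS15}, and the inequality then follows from injectivity of Construction \ref{const-surfaces} as you say. But your case (1) is not a proof --- it is an announcement of the proof. You correctly identify that the whole point of the proposition (and of the hypothesis $e\equiv0\bmod4$) is to show that a parallelogram node germ always yields two real lifts in the bidegree $(d,e)$ setting, and then you explicitly defer the verification (``the step I would write out in full detail''). That verification is precisely the new mathematical content of this proposition relative to Proposition \ref{prop-floorplandegdreallifts}, so omitting it leaves a genuine gap: without it, the unconditional value $\mult_{\R,s}(C^*_{i_j})=2$ in Definition \ref{def-realmult1P1P2}(1) is unjustified, and one cannot even see why $e\equiv0\bmod4$ rather than, say, $e$ even is required.

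Concretely, what has to be checked (and what the paper does) is the following. The parallelogram can sit in the Newton subdivision in two ways, with vertices $(i_j,k,l),(i_j,k,l+1),(i_j,k+1,0),(i_j,k+1,1)$ or $(i_j,k,e-k-1),(i_j,k,e-k),(i_j,k+1,l),(i_j,k+1,l+1)$, in each case associated with the two extra points $\bw'=(i_j-1,e,0)$, $\bw''=(i_j+1,0,0)$ of \eqref{ne2}. One reduces, via \cite[Lemma 4.8, Formula (36)]{MMS15}, to the single equation \eqref{ne1} in $z_{10}$, normalizes the parallelogram to \eqref{ne3} by an affine-integral transformation, and computes the resulting exponents $i',j',m',n'$. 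Two real solutions in $z_{10}$ exist provided the right-hand side of $z_{10}^2=(a_{-1,m',n'}/a_{1,i',j'})\,z_{20}^{m'-i'}z_{30}^{n'-j'}$ is positive; this uses three inputs: (i) $z_{30}=-a_{000}/a_{001}>0$, which follows from the all-plus sign vector $s$ together with the fact that the two relevant coefficients are joined by a single segment of the lattice path; (ii) $m'-i'=e$ is even, so the (a priori unknown) sign of $z_{20}$ is irrelevant; and (iii) $a_{1,i',j'}a_{-1,m',n'}>0$, which for the sign vector $s$ amounts to the number of lattice-path edges between $\bw'$ and $\bw''$, namely $\tfrac12 e(e+3)+2$, being even --- and this is where $e\equiv0\bmod4$ (as opposed to merely $e$ even) is genuinely used, since for $e\equiv2\bmod4$ that count is odd. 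Both parallelogram positions must be run through this computation. Your proposal names the right target but does not carry out any of (i)--(iii), so as written it does not establish the proposition.
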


\begin{proof}
The proof is analogous to that of Proposition
\ref{prop-floorplandegdreallifts}, and we have to confirm the
individual real multiplicities of each node. It follows that the real
multiplicities in Definition \ref{def-realmult1P1P2} (3-7) match the
numbers of real lifts in the same way as the real multiplicities in
Definition \ref{def-realmult1P3} (4-7) treated in the proof of Proposition \ref{prop-floorplandegdreallifts}. In the remaining case of
$C^*_{i_j}$ dual to a parallelogram and $e\equiv0\mod4$ (see
Definition \ref{def-realmult1P1P2}), we claim that there are two real lifts. Indeed, as indicated in the proof of \cite[Lemma 4.8]{MMS15}, the question reduces to the existence of two real solutions with respect to the variable $z_{1,0}$ in the system \cite[Formula (36)]{MMS15}, which we reproduce here in the form
\begin{equation}a_{1,i',j'}z_{20}^{i'}z_{30}^{j'}-a_{-1,m',n'}z_{20}^{m'}z_{30}^{n'}z_{10}^{-2}=0\
  .\label{ne1}\end{equation}

Assume that the parallelogram dual to $C^*_{i_j}$ has vertices
$$(i_j,k,l),\ (i_j,k,l+1), (i_j,k+1,0),\ (i_j,k+1,1)\ .$$ This circuit is associated with two additional points (cf. the second paragraph in the proof of
\cite[Lemma 5.7]{MMS15})
\begin{equation}\bw'=(i_j-1,e,0),\quad\bw''=(i_j+1,0,0)\ .\label{ne2}\end{equation} Performing an affine-integral transformation that takes the given parallelogram into the following one
\begin{equation}\conv\{(0,0,0),\ (0,1,0),\ (0,0,1), (0,1,1)\}\ ,\label{ne3}\end{equation} we obtain
$$\bw'=(-1,e-k,l(e-k-1)),\quad\bw''=(1,-k,-l(k+1))\ .$$ It follows that in (\ref{ne1}) we have
$$i'=-k,\quad j'=-l(k+1),\quad m'=e-k,\quad n'= l(e-k-1)\ .$$ The parameters $z_{20},z_{30}$ can be found from the equations
$$a_{001}+a_{011}z_{20}=0,\quad a_{010}+a_{011}z_{30}=0$$
(cf. \cite[System (36)]{MMS15}). Due to the relation
$$a_{000}a_{011}=a_{010}a_{001}$$
(cf. the first formula in \cite[System (36)]{MMS15}), we have
\begin{equation}z_{30}=-\frac{a_{000}}{a_{001}}>0\ .\label{e1807}\end{equation} Indeed, the parameters $a_{000}$ and $a_{001}$ come from the coefficients at the points $(i_j,k,l)$, $(i_j,k,l+1)$ of the original circuit, and they must have different signs in view of the totally positive sign vector $s$ and the fact that the aforementioned points are joined by one segment of the lattice path. Next, we notice that the difference of the exponents of $z_{20}$ in (\ref{ne1}) equals $m'-i'=e-k-(-k)=e\equiv0\mod4$.
Hence, under the condition $e\equiv0\mod4$, equation (\ref{ne1}) has two real solutions in $z_{10}$ as long as $a_{1,i',j'}a_{-1,m',n'}>0$. Due to our choice of the totally positive sign vector $s$, the latter relation is equivalent to the evenness of the number of the edges of the lattice path joining the points $\bw'$ and $\bw''$. Since this number equals $\frac{1}{2}e(e+3)+2\equiv0\mod2$, we obtain the desired claim.

Assume that the parallelogram dual to $C^*_{i_j}$ has vertices
$$(i_j,k,e-k-1),\ (i_j,k,e-k), (i_j,k+1,l),\ (i_j,k+1,l+1)$$ and is associated with the same points (\ref{ne2}).
By an affine integral transformation we bring the given parallelogram to the form (\ref{ne3}) and obtain
$$\bw'=(-1,e-k,(e-k)(e-k-l-2)+1),\quad\bw''=(1,-k,-k(e-k-l-1)-e+k+1)\ ,$$ which yields in (\ref{ne1})
$$i'=-k,\quad j'=-k(e-k-l-1)-e+k+1,\quad m'=e-k,\quad n'=(e-k)(e-k-l-2)+1\ .$$ As in the preceding case, we have $z_{30}>0$, which finally provides two real solutions in (\ref{ne1}) as long as the number of the edges of the lattice path joining the points $\bw'$ and $\bw''$ is even. Again this number is $\frac{1}{2}e(e+3)+2\equiv0\mod2$, we obtain the desired claim.
\end{proof}

\subsection{Surfaces in $\PP^1\times \PP^1\times \PP^1$}
Throughout this section we suppose that the integers $d,e,f,\delta$ satisfy $0<\delta<\frac{1}{2}\min\{d,e,f\}$.
We also let $n=(d+1)(e+1)(f+1)-1-\delta$.

Note that, under the above assumptions, the family of $\delta$-nodal surfaces of tridegree $(d,e,f)$ in
$\PP^1\times\PP^1\times\PP^1$ has codimension $\delta$ in the space of all surfaces of tridegree $(d,e,f)$, and similarly to formulas (\ref{emn1}), (\ref{emn1a}) we have
$$N_{\delta,\C}^{\PP^1\times\PP^1\times\PP1}(d,e,f)=
\frac{(24(d-1)(e-1)(f-1))^\delta}{\delta!}+O(d^{\delta-2}e^\delta f^\delta)+
+O(d^\delta e^{\delta-2}f^\delta)+O(d^\delta e^\delta f^{\delta-2})$$
$$+O(d^{\delta-1}e^{\delta-1}f^\delta)+O(e^{\delta-1}e^\delta f^{\delta-1})+O(d^\delta
e^{\delta-1}f^{\delta-1})\ .$$

\begin{definition}\label{def-floorplanP1P1P1} A \emph{$\delta$-nodal floor plan $F$ of tridegree $(d,e,f)$} is a tuple $(C_d,\ldots,C_0)$ of plane tropical curves $C_i$ of bidegree $(e,f)$, together with a choice of indices $d\geq i_\delta\geq \ldots\geq i_1\geq 0$, such that $i_{j+1}>i_j+1$ for all $j$, satisfying the following properties:
\begin{enumerate}
\item The curve $C_i$ passes through the following points (where we
  set $i_0=-1$ and $i_{\delta+1}=d+1$):
\begin{displaymath}
\begin{array}{ll}
\text{if } i_{\nu}>i>i_{\nu-1}:& Q_{(d-i)(e+1)(f+1)-\delta+\nu},\ldots,Q_{(d+1-i)(e+1)(f+1)-\delta+\nu-2},\\
\text{if } i=i_\nu:& Q_{(d-i)(e+1)(f+1)-\delta+\nu+1},\ldots,Q_{(d+1-i)(e+1)(f+1)-\delta+\nu-2}.
\end{array}
\end{displaymath}

\item The plane curves $C_{i_j}$ have a node germ for each $j=1,\ldots,\delta$.
\item \label{leftstringalignb}If the node germ of $C_{i_j}$ is a left string, then its horizontal end aligns with a horizontal bounded edge of $C_{i_j+1}$.
\item \label{rightstringalignb} If the node germ of $C_{i_j}$ is a right string, then its horizontal end aligns with a horizontal bounded edge of $C_{i_j-1}$.
\item \label{indexdc} If $i_1=d$ then the node germ of $C_d$ is either a right string, or a right horizontal or up vertical end of weight $2$.
\item \label{index1c} If $i_{\delta}=1$ then the node germ of $C_1$ is either a left string or a left horizontal or down vertical end of weight $2$.
\end{enumerate}
\end{definition}
Observe that each $C_i$ with $i\neq i_\nu$ passes through $ef+e+f$ points, whereas each $C_{i_\nu}$ passes through $ef+e+f-1$ points.

\begin{definition}\label{def-complexmultP1P1P1}
Let $F=(C_d,\ldots,C_0)$ be a $\delta$-nodal floor plan of tridegree $(d,e,f)$.
For each node germ $C^*_{i_j}$ in $C_{i_j}$, we define the following local complex multiplicity $\mult_{\C}(C^*_{i_j})$:
\begin{enumerate}
\item If $C^*_{i_j}$ is dual to a parallelogram, then $\mult_{\C}(C^*_{i_j})=2$.
\item If $C^*_{i_j}$ is the midpoint of an edge of weight $2$, then $\mult_{\C}(C^*_{i_j})=8$.
\item If $C^*_{i_j}$ is a horizontal end of weight $2$, then $\mult_{\C}(C^*_{i_j})=2f.$
\item If $C^*_{i_j}$ is a vertical end of weight $2$, then $\mult_{\C}(C^*_{i_j})=2e.$
\item If $C^*_{i_j}$ is a left string, then $\mult_{\C}(C^*_{i_j})=2. $
\item If $C^*_{i_j}$ is a right string, then $\mult_{\C}(C^*_{i_j})=2. $
\end{enumerate}

The \emph{complex multiplicity} of a $\delta$-nodal floor plan is defined as the product of the local multiplicities of all of its node germs,
$$\mult_{\C}(F)=\prod_{j=1}^{\delta}\mult_{\C}(C^*_{i_j}),$$
and we set $$N_{\delta,\C}^{\PP^1\times\PP^1\times\PP^1,\floor}(d)= \sum_F \mult_{\C}(F),$$ where the sum goes over all $\delta$-nodal floor plans of tridegree $(d,e,f)$.
\end{definition}
Notice that different to Definitions \ref{def-complexmultP3} and \ref{def-complexmultP2P1}, a right string cannot align with a vertex which is not adjacent to a horizontal edge, which is why we do not have case (7) in $\PP^1\times\PP^1\times\PP^1$ that we had in those two Definitions.

\begin{proposition}\label{prop-floorplandegdeflifts}
Construction \ref{const-surfaces} viewed as a map from the set of
$\delta$-nodal floor plans of tridegree $(d,e,f)$ to the set of tropical surfaces is injective. It sends a floor plan $F$ to a $\delta$-nodal tropical surface $S$ of tridegree $(d,e,f)$ passing through the $q_i$, and we have $\mult_{\C}(F)=\mult_\C(S)$.
In particular, \begin{equation} N_{\delta,\C}^{\PP^1\times\PP^1\times\PP^1,\floor}(d,e,f)\leq N_{\delta,\C}^{\PP^1\times\PP^1\times\PP^1,\trop}(d,e,f).\end{equation}

\end{proposition}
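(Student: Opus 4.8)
The plan is to carry over the arguments of Propositions \ref{prop-floorplandegdlifts} and \ref{prop-floorplandegdelifts} to the tridegree setting, checking that the local patchworking lemmas of \cite{MMS15} apply to each node germ with the combinatorics recorded in Definition \ref{def-complexmultP1P1P1}. First I would establish the structural assertions. Construction \ref{const-surfaces} glues the $d+1$ plane curves $C_0,\dots,C_d$ of bidegree $(e,f)$ along the floor pattern of \cite{BBLdM17}; the result is a tropical surface $S$ whose dual subdivision is a subdivision of the box polytope (\ref{nepolytope}), so $S$ has tridegree $(d,e,f)$, and the normalization of the shifts of the rational functions $f_i$ makes $S$ pass through all of the $q_i$. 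Injectivity is immediate: slicing $S$ by the hyperplanes separating consecutive floors recovers the tuple $(C_d,\dots,C_0)$, and the indices $i_1,\dots,i_\delta$ are read off as the floors carrying a node germ.

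The main content is to show that $S$ is $\delta$-nodal with $\mult_\C(S)=\prod_{j}\mult_\C(C^*_{i_j})$, which I would obtain by running through the six germ types just as in Proposition \ref{prop-floorplandegdlifts}. For a parallelogram, the midpoint of a weight-$2$ edge, and a left string (cases (1), (2), (5) of Definition \ref{def-complexmultP1P1P1}) the relevant local piece of the dual subdivision of $S$ --- a bipyramid over a parallelogram, four bipyramids over a pair of area-$2$ triangles, and a bipyramid over the parallelogram spanned by the string edge and the horizontal edge of $C_{i_j+1}$ with which it aligns --- is combinatorially identical to the $\PP^3$ case, so Lemmas A.1 and A.7 of \cite{MMS15} yield $2$, $8$ and $2$ complex lifts. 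For a right string (case (6)) the horizontal end of the string now aligns with a horizontal bounded edge of $C_{i_j-1}$; exactly as for a left string this produces a parallelogram sitting at the base of a bipyramid with the two vertices dual to the neighbouring triangles of $C_{i_j-1}$, and Lemma A.7 of \cite{MMS15} again gives $2$ lifts. The vertex-alignment subcase that occurs for $\PP^3$ and $\PP^1\times\PP^2$ cannot arise here, for the reason noted after Definition \ref{def-complexmultP1P1P1}; this is why there is no seventh case.

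For the weight-$2$ horizontal and vertical ends (cases (3) and (4)) I would proceed as in Propositions \ref{prop-floorplandegdlifts} and \ref{prop-floorplandegdelifts}: an integral-affine transformation straightens the relevant boundary edge of the Newton rectangle $\conv\{(0,0),(e,0),(0,f),(e,f)\}$, after which the local lifting lemmas A.2 and A.3 of \cite{MMS15} apply without change. The only extra bookkeeping compared with $\PP^3$ is the number of crossing points available for pinning down the tropical node: a horizontal weight-$2$ end of $C_{i_j}$ meets $f$ ends of the adjacent floor and a vertical weight-$2$ end meets $e$ ends, each crossing carrying $2$ complex lifts, which reproduces the values $2f$ and $2e$ in Definition \ref{def-complexmultP1P1P1}(3),(4). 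As in Proposition \ref{prop-floorplandegdlifts}, genericity of the $q_i$ together with the fact that all ends built away from the node germs have weight $1$ guarantees that $S$ has no further singularities, so $S$ is exactly $\delta$-nodal; multiplying the local counts gives $\mult_\C(F)=\mult_\C(S)$, and the displayed inequality follows as (\ref{eq-floortrop}) did.

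I expect the one point demanding genuine care to be the weight-$2$ vertical end: among the germs treated in the $\PP^3$ and $\PP^1\times\PP^2$ propositions the non-horizontal weight-$2$ end was always ``diagonal'', i.e.\ of primitive direction $(1,1)$ rather than $(0,1)$, so one must verify that, after straightening the Newton rectangle, the same local lemma of \cite{MMS15} still governs it and that its $e$ crossing points contribute exactly as the horizontal crossing points do. Everything else is a line-by-line adaptation of the earlier proofs.
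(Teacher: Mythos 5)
Your proposal is correct and follows essentially the same route as the paper, which simply declares the argument analogous to Proposition \ref{prop-floorplandegdlifts} while noting exactly the three differences you identify: a horizontal weight-$2$ end now meets $f$ edges of the adjacent floor, a vertical weight-$2$ end meets $e$ edges, and a right string cannot align with a vertex not adjacent to a horizontal edge. Your case-by-case verification against the local lemmas of \cite{MMS15} is just a more explicit write-out of that same argument.
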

\begin{proof}
The proof is analogous to Proposition \ref{prop-floorplandegdlifts}. The only difference is that a horizontal end of weight $2$ intersects $f$ edges of the previous or next floor, and a vertical end of weight $2$ intersects $e$ edges of the next floor, and that a right string cannot align with a vertex which is not adjacent to a horizontal edge.
\end{proof}

We now turn again to the real situation with a somewhat more subtle
choice of signs.

\begin{definition}\label{signvectorP1P1P1}
We define the sign vector $s=\{(+,+,\eps_r)\}_{r=1,...,n}$ so that $\eps_r=+$ everywhere, except for
$$r=f+2+2\lambda f+\mu(e+1)(f+1),\quad \lambda=0,...,\left[\frac{e-1}{2}\right]-\theta,\ \mu=1,...,d-2\ ,$$ where
$\theta=0$ or $1$ is chosen to satisfy the condition
$$ef+e+f+\left[\frac{e-1}{2}\right]-\theta\equiv0\mod2\ .$$
\end{definition}

\begin{remark}\label{nr1}
For the reader's convenience, we comment on the meaning of the formulas for the sign vector $s$ in Definition \ref{signvectorP1P1P1}.
Extend the given configuration $\bw$ with $\delta$ extra points following the rule in the first paragraph of Section \ref{nsec3}.
Take the corresponding lattice path which goes through all the integral points of the polytope (\ref{nepolytope}). In each slice given by fixing the first coordinate $i=\const\ge1$ and for each odd $k=2\lambda+1$ (with $\lambda$ as in Definition \ref{signvectorP1P1P1}), we consider the fragment of the lattice path joining the points
$(i,k,0)$ and $(i,k,f)$ and pick the bottom segment. This segment is associated with a point in the given configuration, and for this point, we set the signs $(+,+,-)$. All the remaining elements of the sign vector $s$ are $(+,+,+)$.
When we consider the actual lattice path omitting $\delta$ integral points in the polytope, we, first, exile the extra $\delta$ points in the configuration and, second, we notice that each ``negative" segment of the lattice path is shifted upward by the number of preceding omitted points. The idea behind this choice of the sign vector is that it allows one
to obtain more real $\delta$-nodal surfaces than for the totally positive sign vector (see details in the proof of Proposition
\ref{prop-floorplandegdfrealliftsP1P1P1}).
\end{remark}

\begin{definition}\label{def-realmult1P1P1P1}
Let $F=(C_d,\ldots,C_0)$ be a $\delta$-nodal floor plan of tridegree $(d,e,f)$.
For each node germ $C^*_{i_j}$ in $C_{i_j}$, we define the following local real multiplicity $\mult_{\R,s}(C^*_{i_j})$:
\begin{enumerate}
\item If $C^*_{i_j}$ is dual to a parallelogram either with vertices
\begin{equation}(i_j,k,l),\ (i_j,k,l+1),\ (i_j,k+1,0),\ (i_j,k+1,1),
\label{ne4}\end{equation}
or
\begin{equation}(i_j,k,f-1),\ (i_j,k,f),\ (i_j,k+1,l),\ (i_j,k+1,l+1),
\label{ne5}\end{equation} and $e$ is even, then $\mult_{\R,s}(C^*_{i_j})=2$.
\item If $C^*_{i_j}$ is the midpoint of an edge of weight $2$, then $ \mult_{\R,s}(C^*_{i_j})=0.$
\item If $C^*_{i_j}$ is a horizontal end of weight $2$, then $\mult_{\R,s}(C^*_{i_j})=0.$
\item If $C^*_{i_j}$ is a vertical end of weight $2$, dual to one of the segments
$[(i_j,k-1,0),(i_j,k+1,0)]$ or $[(i_j,k-1,f),(i_j,k+1,f)]$
where $k\le2\left[\frac{e-1}{2}\right]-2\theta+1,$
then $\mult_{\R,s}(C^*_{i_j})=2f$.
\item If $C^*_{i_j}$ is a left string, then it depends on the position of the dual of the horizontal bounded edge of $C_{i_j+1}$ with which it aligns. Assume it has the vertices $(i_j+1,k,l)$ and $(i_j+1,k,l+1)$.
Then \begin{displaymath} \mult_{\R,s}(C^*_{i_j})=\begin{cases}2\\ 0\end{cases}\mbox{ if }\;\;\; f+k \equiv \begin{cases} 0\\1\end{cases}\mbox{ modulo }2.\end{displaymath}
\item If $C^*_{i_j}$ is a right string, then it depends on the position of the dual of the horizontal bounded edge of $C_{i_j+1}$ with which it aligns. Assume it has the vertices $(i_j-1,k,l)$ and $(i_j-1,k,l+1)$.
Then \begin{displaymath} \mult_{\R,s}(C^*_{i_j})=\begin{cases}2\\ 0\end{cases}\mbox{ if }\;\;\; f+k \equiv \begin{cases} 0\\1\end{cases}\mbox{ modulo }2.\end{displaymath}

\end{enumerate}

The \emph{real multiplicity} of a $\delta$-nodal floor plan is defined as the product of the local multiplicities of all of its node germs,
$$\mult_{\R,s}(F)=\prod_{j=1}^{\delta}\mult_{\R,s}(C^*_{i_j}),$$
and we let $$N_{\delta,\R, s}^{\PP^1\times\PP^1\times\PP^1,\floor}(d,e,f)= \sum_F \mult_{\R, s}(F),$$ where the sum goes over all $\delta$-nodal floor plans of tridegree $(d,e,f)$.
\end{definition}

\begin{proposition}\label{prop-floorplandegdfrealliftsP1P1P1} Suppose that $e$ is even.
Given a floor plan $F$, the $\delta$-nodal tropical surface $S$ of tridegree $(d,e,f)$ passing through the $q_i$ we obtain by Construction \ref{const-surfaces} at least $\mult_{\R,s}(F) $ real lifts.
In particular, $$N_{\delta,\R,s}^{\PP^1\times\PP^1\times\PP^1,\floor}(d,e,f)\leq N_{\delta,\R}^{\PP^1\times\PP^1\times\PP^1}((d,e,f),\bw).$$
\end{proposition}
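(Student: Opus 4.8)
The plan is to follow the proofs of Propositions \ref{prop-floorplandegdreallifts} and \ref{prop-floorplandegdrealliftsP1P2}. Since the node germs of $F$ are pairwise far apart, the patchworking results of \cite{IMS09,MMS15} decouple the lifting system, so the number of real lifts of $S$ through $\bw$ for the sign vector $s$ of Definition \ref{signvectorP1P1P1} equals the product, over $j=1,\ldots,\delta$, of a local count of real lifts near the node germ $C^*_{i_j}$; it therefore suffices to bound each local count below by $\mult_{\R,s}(C^*_{i_j})$. Cases (2) and (3) of Definition \ref{def-realmult1P1P1P1} are trivial, since there the local real multiplicity is set to $0$. Cases (5) and (6) (left and right strings) are transcribed from the string case in the proof of Proposition \ref{prop-floorplandegdreallifts}: the relevant lifting equation is a binomial in the coefficient attached to the aligning horizontal bounded edge of $C_{i_j\pm 1}$, and it has two real roots exactly when $f+k$ is even; the change of the parity condition from $i_j-k$ in the $\PP^3$ case to $f+k$ here only reflects that in $\PP^1\times\PP^1\times\PP^1$ a string aligns with a horizontal edge and that the Newton polytope is the box (\ref{nepolytope}). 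The asserted inequality then follows by summing over all $\delta$-nodal floor plans of tridegree $(d,e,f)$, since by Proposition \ref{prop-floorplandegdeflifts} distinct floor plans give distinct $\delta$-nodal tropical surfaces, each contributing $\mult_{\R,s}(F)$ to $N_{\delta,\R}^{\PP^1\times\PP^1\times\PP^1}((d,e,f),\bw)$.

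The substantial cases are (1) and (4). For a parallelogram with vertices (\ref{ne4}) or (\ref{ne5}) and $e$ even I would reuse the computation in the proof of Proposition \ref{prop-floorplandegdrealliftsP1P2} almost verbatim: an integral-affine transformation carries the parallelogram to $\conv\{(0,0,0),(0,1,0),(0,0,1),(0,1,1)\}$, the associated circuit acquires two auxiliary points $\bw'$ and $\bw''$, and the lifting equation takes the shape (\ref{ne1}). One then checks that $z_{30}>0$, that the exponent difference governing the solvability in $z_{10}$ is even because $e$ is even, and that the product of the two coefficients in (\ref{ne1}) is positive; the last point is a parity count over the lattice path between $\bw'$ and $\bw''$, corrected by the number of negative third signs of $s$ that lie between them. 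For a weight-$2$ vertical end dual to one of the distinguished segments $[(i_j,k-1,0),(i_j,k+1,0)]$ or $[(i_j,k-1,f),(i_j,k+1,f)]$ with $k\le 2\left[\frac{e-1}{2}\right]-2\theta+1$, the picture parallels case (4) of Proposition \ref{prop-floorplandegdreallifts}: the dual Newton subdivision contains $f$ pyramids, one for each intersection of the vertical end with a horizontal edge of the neighbouring floor, each giving a binomial equation with two real solutions provided a sign product is positive, and one must show all $f$ of these positivity conditions hold simultaneously.

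I expect the main obstacle to be exactly the global bookkeeping that makes a single sign vector work for all node germs at once: the sign vector of Definition \ref{signvectorP1P1P1} must, through the choice of $\theta\in\{0,1\}$ satisfying $ef+e+f+\left[\frac{e-1}{2}\right]-\theta\equiv0\bmod2$, place its negative entries so as to turn every one of the $f$ sign products of a distinguished vertical end (case (4)) positive, while not spoiling the positivity needed for the parallelograms (case (1)) and the strings (cases (5), (6)). Carrying this out requires tracking how the $\delta$ integral points omitted from the polytope (\ref{nepolytope}) shift the distinguished bottom segments of the lattice path upward, and checking that after this shift the parity identity above still forces all the relevant sign products to be positive; everything else is a routine adaptation of the arguments for $\PP^3$ and $\PP^1\times\PP^2$ already carried out in this section.
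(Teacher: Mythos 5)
Your outline follows the same route as the paper's proof---decoupling into local counts of real lifts at each node germ, treating the parallelogram and weight-$2$ vertical end via the circuit/lattice-path sign analysis of Proposition \ref{prop-floorplandegdrealliftsP1P2}, and summing over floor plans via injectivity---but it stops exactly where the actual proof begins. The substance of the paper's argument is the explicit sign bookkeeping that you defer as ``the main obstacle'': (i) the preliminary observation that, after omitting the $\delta$ lattice points, the $(+,+,-)$-segments of the lattice path are of the form $[(i,k,l_{ik}),(i,k,l_{ik}+1)]$ with $k$ odd and $0\le l_{ik}\le\delta$; (ii) for a parallelogram (\ref{ne4}) or (\ref{ne5}), the count that the number of sign changes along the fragment between $\bw'=(i_j-1,e,f)$ and $\bw''=(i_j+1,0,0)$ equals $ef+e+f-\left(\left[\frac{e-1}{2}\right]-\theta\right)$, which is even precisely by the condition defining $\theta$ in Definition \ref{signvectorP1P1P1}, so that together with $z_{30}>0$ and $e$ even the key equation (\ref{ne6}) has two real roots; (iii) for a vertical weight-$2$ end, the fact that the fragment joining the endpoints of each of the $f$ relevant circuits consists of $2f+1$ arcs of which exactly one---the one carrying the negative sign---fails to produce a sign change, so the count is $2f$, even, and each intersection point yields two real lifts; (iv) for the strings, the circuit with auxiliary points $\bw'=(i_j-1,k-1,f)$, $\bw''=(i_j-1,k+1,0)$ and the verification that the sign-change count has the parity of $f+k$. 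You state the needed positivity and parity claims (``one must show all $f$ of these positivity conditions hold simultaneously'', ``checking that after this shift the parity identity above still forces all the relevant sign products to be positive'') without establishing them; since these verifications are the entire content of the proposition and the rest is, as you say, routine, this is a genuine gap rather than a complete proof.

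Two of your assertions are also imprecise in a way that matters for closing that gap. The string cases here are not a transcription of the string case of Proposition \ref{prop-floorplandegdreallifts}: there the sign vector is all-positive and the paper cites \cite[Lemma 5.6(3)]{MMS15} with parity $i_j-k$, whereas here the parity $f+k$ is derived directly from the lattice path and depends on the negative entries of $s$ sitting in the odd columns $k$---it is not merely a reflection of the box-shaped Newton polytope. Likewise, in case (4) the negative entries are not a constraint whose effect must be ``not spoiled'' elsewhere; they are the very mechanism that makes the local multiplicity nonzero (with the totally positive vector the $2f+1$ arcs would all yield sign changes and no real lifts are guaranteed), which is the stated purpose of Definition \ref{signvectorP1P1P1} in Remark \ref{nr1}. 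A complete proof must carry out these parity computations, including the effect of the $\delta$ upward shifts of the negative segments; your proposal correctly locates them but leaves them open.
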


\begin{proof} As in Propositions \ref{prop-floorplandegdreallifts} and \ref{prop-floorplandegdrealliftsP1P2}
the real multiplicity of a floor plan equals the product of the numbers of real lifts of each node $C_{i_j}^*$.

We start with the following observation: in any lattice path corresponding to $\delta$-nodal floor plans under consideration, the segments that are associated with the sign elements $(+,+,-)\in s$ are of the following form (cf. Remark \ref{nr1})
$$[(i,k,l_{ik}),(i,k,l_{ik}+1)],\quad\text{where}\ \begin{cases}&0\le i\le d,\\
&1\le k\le2[(e-1)/2]-2\theta+1,\ k\equiv1\mod2,\\
&0\le l_{ik}\le\delta\end{cases}$$

Consider the node as in Definition \ref{def-realmult1P1P1P1} (1). Let it be dual to the parallelogram (\ref{ne4}). This parallelogram is associated with the points
$\bw'=(i_j-1,e,f)$ and $\bw''=(i_j+1,0,0)$. An affine-integral transformation that takes the parallelogram to the canonical form (\ref{ne3}) brings $\bw',\bw''$ to the form
$$\bw'=(-1,e-k,l(e-k)+f-l)=:(-1,m',n'),\quad\bw''=(1,-k,-l(k+1))=:(1,i',j')\ .$$ The key equation (\ref{ne1}) turns into
\begin{equation}a_{1,i',j'}z_{20}^{-e}-a_{-1,m',n'}z_{30}^{le+f}z_{10}^{-2}=0\ .\label{ne6}\end{equation} In view of relation (\ref{e1807}) which holds here in the same way as in the proof of Proposition \ref{prop-floorplandegdrealliftsP1P2}, equation (\ref{ne6}) has two real solutions with respect to $z_{10}$, since $e$ is even and $a_{1,i',j'}a_{-1,m',n'}>0$. The latter relations comes from the fact that the number of sign changes along the fragment of the lattice path between the points $\bw',\bw''$ equals
$$ef+e+f-\left(\left[\frac{e-1}{2}\right]-\theta\right)\equiv0\mod2\ .$$
In the case of the parallelogram (\ref{ne5}), we bring it to the canonical form (\ref{ne3}) and obtain
$$\bw'=(-1,e-k,(e-k)(f-l-1)+l)=:(-1,m',n'),\quad\bw''=(1,-k,k(l+1-f)-f+1)=:(1,i',j')\ .$$ Hence, the key equation (\ref{ne1}) takes the form
$$a_{1,i',j'}z_{20}^{-e}-a_{-1,m',n'}z_{30}^{e(f-l-1)+f}z_{10}^{-2}=0\ ,$$
which for the above reason yields two real solutions.

In the case of Definition \ref{def-realmult1P1P1P1} (5), the node is associated with the parallelogram circuit
$$\conv\{(i_j-1,k,l),\ ((i_j-1,k,l+1),\ (i_j,0,0),\ (i_j,0,1)\}$$ and two points
$$\bw'=(i_j-1,k-1,f),\quad\bw''=(i_j-1,k+1,0)\ .$$ Bringing the parallelogram to the canonical form
(\ref{ne3}) by an affine-integral transformation, we obtain
$$\bw'=(-1,0,f-2l)=:(-1,m',n'),\quad\bw''=(1,0,0)=:(1,i',j'),$$ and the key equation (\ref{ne1}) in the form
$$a_{1,i',j'}-a_{-1,m',n'}z_{30}^{f-2l}z_{10}^{-2}=0\ .$$ To get two real solutions we need the number of the sign changes along the fragment of the lattice path between the points $\bw',\bw''$ to be even, and this number (cf. Definition
\ref{signvectorP1P1P1}) has the parity of $f+k$. In the same manner we
treat the case of Definition \ref{def-realmult1P1P1P1} (6).

At last, the case of Definition \ref{def-realmult1P1P1P1} (4) is governed by \cite[Lemma 4.9(1) and Lemma 5.9]{MMS15}. An upward oriented edge of weight $2$ intersects $f$ horizontal left-oriented ends of the floor $C_{i_j+1}$, and each intersection point yields $2$ real lifts. Indeed, the sufficient condition
(cf. the first paragraph of the proof of \cite[lemma 5.9]{MMS15}) for
that is that the number of sign changes in the fragment of the lattice path joining the endpoints of the circuit is even. This fragment contains
$2f+1$ arcs, each one yielding a sign change except for exactly one arc (see Definition \ref{signvectorP1P1P1}). Similarly we obtain $2f$ real lifts for each downward oriented edge of weight $2$.
\end{proof}

\section{Asymptotics and lower bounds for counts of floor plans for surfaces}\label{sec-asympt}

\begin{theorem}\label{thm-floorplanasymptoticP3}
The number $ N_{\delta,\C}^{\PP^3,\floor}(d)$ of $\delta$-nodal floor plans for surfaces of degree $d$ satisfies $$N_{\delta,\C}^{\PP^3,\floor}(d)= (4d^3)^\delta/\delta!+O(d^{3\delta-1}).$$
\end{theorem}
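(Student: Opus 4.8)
\emph{Strategy.} The plan is to turn the weighted count $N_{\delta,\C}^{\PP^3,\floor}(d)=\sum_F\mult_{\C}(F)$ into a sum over index tuples, exploiting that $\mult_{\C}(F)=\prod_{j=1}^\delta\mult_{\C}(C^*_{i_j})$ is multiplicative over the node germs. Fix an admissible tuple $\mathbf{i}=(i_1,\dots,i_\delta)$ with $1\le i_1$, $i_{j+1}\ge i_j+2$, $i_\delta\le d$. A $\delta$-nodal floor plan with this index tuple amounts to a choice of one node germ on each $C_{i_j}$: for $i\notin\{i_1,\dots,i_\delta\}$ the curve $C_i$ is the unique tropical plane curve of degree $i$ through the prescribed block of the horizontally stretched points, so it contributes no choice. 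Crucially, the gap condition $i_{j+1}>i_j+1$ forces both neighbours $C_{i_j-1}$ and $C_{i_j+1}$ of a node-germ curve to be of this determined kind, so the alignment conditions \ref{leftstringalign} and \ref{rightstringalign} (and, at the ends, \ref{indexd} and \ref{index1a}) make the weighted set of admissible node germs on $C_{i_j}$ depend only on $i_j$. Writing $A(i)$ for the number of node germs on a degree-$i$ floor counted with the complex multiplicities of Definition \ref{def-complexmultP3} — with the obvious modifications at $i=1$ and $i=d$ — we obtain
\[
N_{\delta,\C}^{\PP^3,\floor}(d)=\sum_{\substack{1\le i_1,\ i_{j+1}\ge i_j+2,\ i_\delta\le d}}\ \prod_{j=1}^\delta A(i_j).
\]

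\emph{Computing $A(i)$.} The computational core is the uniform estimate $A(i)=12\,i^2+O(i)$. I would establish it by running through the seven node-germ types of Definition \ref{def-complexmultP3}, in the spirit of the one-dimension-lower Proposition \ref{prop-complexcurvecount}: a degree-$i$ floor has $O(i^2)$ candidate parallelograms (multiplicity $2$) and midpoints of weight-$2$ edges (multiplicity $8$), $O(i)$ horizontal and $O(i)$ diagonal weight-$2$ ends of multiplicity $2(i+1)$ resp.\ $2(i-1)$, and $O(i^2)$ left/right string configurations of multiplicity $1$ or $2$; summing the products of counts and multiplicities leaves $12\,i^2+O(i)$. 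The leading constant $12$ is consistent with the classical discriminant degree $4(d-1)^3$ of plane sections through $\sum_{i=1}^d 12\,i^2=4d^3+O(d^2)$; in fact for $\delta=1$ this scheme reproduces $N_{1,\C}^{\PP^3,\floor}(d)=\sum_{i=1}^d A(i)=4(d-1)^3$.

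\emph{Asymptotic bookkeeping.} It remains to estimate the displayed sum. Replacing each factor $A(i_j)$ by $12\,i_j^2$ changes the sum by $O(d^{3\delta-1})$ (one factor becomes $O(i_j)$, the rest stay $O(i^2)$, and $\sum_i i\cdot(\sum_i i^2)^{\delta-1}=O(d^{3\delta-1})$); replacing the spacing constraint $i_{j+1}\ge i_j+2$ by mere strict monotonicity again costs $O(d^{3\delta-1})$, since fixing a coincidence $i_{j+1}=i_j+1$ contributes $\sum_i i^4\cdot(\sum_i i^2)^{\delta-2}=O(d^{3\delta-1})$; and passing from strictly increasing tuples to unordered ones, which divides by $\delta!$, likewise costs $O(d^{3\delta-1})$ from the diagonals. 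Hence
\[
N_{\delta,\C}^{\PP^3,\floor}(d)=\frac{1}{\delta!}\Bigl(\sum_{i=1}^d 12\,i^2\Bigr)^{\!\delta}+O(d^{3\delta-1})=\frac{(4d^3+O(d^2))^\delta}{\delta!}+O(d^{3\delta-1})=\frac{(4d^3)^\delta}{\delta!}+O(d^{3\delta-1}).
\]

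\emph{Main obstacle.} The hard part is the lemma $A(i)=12\,i^2+O(i)$: besides the enumeration of node germs with their multiplicities, it requires checking that this weighted local count is independent of the position $\nu$ of $i_j$ among the indices and of the neighbouring smooth floors — the step where the horizontally stretched genericity of $\bw$ is genuinely used. Everything afterwards is elementary estimation of the resulting sum.
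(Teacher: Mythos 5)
Your proposal is correct and follows essentially the same route as the paper: the paper likewise reduces to the per-floor weighted node-germ count ($6i^2+O(i)$ from parallelograms and weight-$2$ midpoints via Proposition \ref{prop-complexcurvecount}, $4i^2+O(i)$ from weight-$2$ ends, $2i^2+O(i)$ from strings, i.e.\ your $A(i)=12i^2+O(i)$), then sums over index tuples with Faulhaber's formula, discarding the spacing and boundary constraints at a cost of $O(d^{3\delta-1})$. The only difference is bookkeeping: the paper splits $\delta$ into a partition $(\delta_1,\delta_2,\delta_3)$ by node-germ type and evaluates the resulting multinomial sum to obtain $4^\delta/\delta!$, whereas you bundle all types into $A(i)$ and divide by $\delta!$ at the end --- the same computation in a slightly more compact arrangement.
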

\begin{proof}
Given a floor plan, we associate an ordered partition $(\delta_1,\delta_2,\delta_3)$ of $\delta$, where, for $i=1,2,3$, $\delta_i$ is the number of node germs of type (i) in Definition \ref{def-nodegerm}.
We first count floor plans for a fixed partition $(\delta_1,\delta_2,\delta_3)$.
Relabeling the tuple of indices, we let $i_{1}>\ldots>i_{\delta_1}$ be
the indices of the curves carrying a node germ of type (1), $i_{\delta_1+1}>\ldots>i_{\delta_1+\delta_2}$ the indices of the curves carrying a node germ of type (2) and $i_{\delta_1+\delta_2+1}>\ldots>i_\delta$ the indices of curves with a node germ of type (3). Let us momentarily fix such a choice of indices, and consider the contribution of all floor plans for our fixed partition and for the fixed choice of indices. This contribution is polynomial in the $i_j$. Since we will later sum over all possible $i_j$, we only need to take the leading order into account for our asymptotic count.

By Proposition \ref{prop-complexcurvecount}, the number of $C_{i_j}$ satisfying the conditions with a node germ of type (1) is $3i_j^2+O(i_j)$, and the complex multiplicity of such a node germ is defined to be twice the multiplicity of the curve (see Definition \ref{def-complexmultP3}). Thus, for each $i_j$, $j=1,\ldots,\delta_1$, we obtain a factor of $6i_j^2$.

The number of $C_{i_j}$ with a node germ of type (2) is $2i_j$, since
we have that many possible locations for horizontal or diagonal ends
of weight $2$. The multiplicity for each one is $2i_j+O(1)$ by Definition \ref{def-complexmultP3}, so altogether we obtain a factor of $4i_j^2$.

Finally, consider a curve $C_{i_j}$ with a left string.
It can align with any horizontal bounded edge of $C_{i_j+1}$, and since the latter is a smooth tropical plane curve of degree $i_j+1$ satisfying horizontally stretched point conditions, there are $i_j+\ldots+1=\frac{1}{2}i_j^2+O(i_j)$ such edges. For each, the local complex multiplicity is $2$, and so we have a contribution of $i_j^2$ for these. If $C_{i_j}$ has a right string, it can either meet a diagonal bounded edge of $C_{i_j-1}$, of which there are $i_j-2$ many, or a vertex not adjacent to a diagonal edge, of which there are $2(i_j-3+i_j-4+\ldots+1)=i_j^2+O(i_j)$ many. We can see that the alignments with a diagonal bounded edge do not produce the leading degree, so we can neglect them. Altogether, we obtain a factor of $i_j^2+i_j^2=2i_j^2$ for all possibilities for $C_{i_j}$ which have a string, counted with multiplicity.

Thus, the leading term of the contribution of all floor plans with fixed partition and fixed choice of indices as above is
$$ 6^{\delta_1}\cdot \prod_{j=1}^{\delta_1} i_j^2\cdot 4^{\delta_2}\cdot \prod_{j=\delta_1+1}^{\delta_1+\delta_2} i_j^2\cdot 2^{\delta_3}\cdot \prod_{j=\delta_1+\delta_2+1}^{\delta}i_j^2.$$

We have to sum this expression over all choices of indices. By
definition of floor plans (see \ref{def-floorplan}), the indices have
to be sufficiently apart. Simplifying our summation ranges in a way
that this is not taken into account however, we do not change the
leading term. Also, the computations from above are not valid for the
indices $1,d$, since there are special requirements in the definition
of a floor plan. Again, neglecting those does not change the leading term.
 We conclude that the leading term of the contribution of floor plans with partition $(\delta_1,\delta_2,\delta_3)$ equals the leading term of
$$
\sum_{i_1=1}^{d} \sum_{i_2=1}^{i_1} \ldots \sum_{i_{\delta_1}=1}^{i_{\delta_1-1}} \sum_{i_{\delta_1+1}=1}^{d} \ldots \sum_{i_{\delta_1+\delta_2}=1}^{i_{\delta_1+\delta_2-1}} \sum_{i_{\delta_1+\delta_2+1}=1}^{d} \ldots \sum_{i_{\delta}=1}^{i_{\delta-1}}
 6^{\delta_1}\cdot \prod_{j=1}^{\delta_1} i_j^2\cdot 4^{\delta_2}\cdot \prod_{j=\delta_1+1}^{\delta_1+\delta_2} i_j^2\cdot 2^{\delta_3}\cdot \prod_{j=\delta_1+\delta_2+1}^{\delta}i_j^2.
$$

We compute this, using Faulhaber's formula $\sum_{h=1}^d h^p= \frac{1}{p+1} d^{p+1}+O(d^{p})$, to be
 $$6^{\delta_1} \left(\frac{1}{3}\right)^{\delta_1} \frac{1}{\delta_1!}\cdot  4^{\delta_2}\left(\frac{1}{3}\right)^{\delta_2} \frac{1}{\delta_2!}\cdot 2^{\delta_3}\left(\frac{1}{3}\right)^{\delta_3}\frac{1}{\delta_3!} d^{3\delta}= 2^{\delta_1+\delta_3}\cdot 4^{\delta_2}\cdot \left(\frac{1}{3}\right)^{\delta_2+\delta_3}\cdot \frac{1}{\delta_1!\delta_2!\delta_3!}\cdot d^{3\delta}.$$

What remains to be done is to sum the coefficient of this expression over all $(\delta_1,\delta_2,\delta_3)$. Taking $\delta_1+\delta_2+\delta_3=\delta$ into account, we obtain

\begin{align*}
&\sum_{\delta_1=0}^{\delta}\sum_{\delta_2=0}^{\delta-\delta_1} 2^{\delta}2^{\delta_2}\left(\frac{1}{3}\right)^{\delta-\delta_1}\cdot \frac{1}{\delta_1!\delta_2!(\delta-\delta_1-\delta_2)!}
= 2^{\delta}\sum_{\delta_1=0}^{\delta} \left(\frac{1}{3}\right)^{\delta-\delta_1}\frac{1}{\delta_1!}\sum_{\delta_2=0}^{\delta-\delta_1} 2^{\delta_2} \frac{1}{\delta_2!(\delta-\delta_1-\delta_2)!}\\
=& 2^{\delta}\sum_{\delta_1=0}^{\delta} \left(\frac{1}{3}\right)^{\delta-\delta_1}\frac{1}{\delta_1!(\delta-\delta_1)!}
\sum_{\delta_2=0}^{\delta-\delta_1} 2^{\delta_2} \binom{\delta-\delta_1}{\delta_2}
= 2^{\delta}\sum_{\delta_1=0}^{\delta} \left(\frac{1}{3}\right)^{\delta-\delta_1}\frac{1}{\delta_1!(\delta-\delta_1)!} (1+2)^{\delta-\delta_1}\\
=& 2^{\delta}\sum_{\delta_1=0}^{\delta} \frac{1}{\delta_1!(\delta-\delta_1)!}
= \frac{2^\delta}{\delta!}\sum_{\delta_1=0}^{\delta} \binom{\delta}{\delta_1}
= \frac{2^\delta}{\delta!}(1+1)^\delta = \frac{4^\delta}{\delta!}.
\end{align*}

\end{proof}

We can adapt this count, producing lower bounds for numbers of real surfaces:

\begin{theorem}\label{thm-realfloorplanasymptoticP3positive}
Let $s=((+)^3)^n$ be the sign vector with all signs positive. Then the number of $\delta$-nodal floor plans of degree $d$, counted with real multiplicity w.r.t. $s$, satisfies:
$$ N_{\delta,\R,s}^{\PP^3,\floor}(d) =\frac{1}{\delta!}\left(\frac{3}{2}d^3\right)^\delta+O(d^{3\delta-1}).$$\end{theorem}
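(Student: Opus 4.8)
The plan is to follow the proof of Theorem~\ref{thm-floorplanasymptoticP3} line by line, replacing the complex local multiplicities by the real ones of Definition~\ref{def-realmult1P3}. As there, I would group the $\delta$-nodal floor plans of degree $d$ by an ordered partition $(\delta_1,\delta_2,\delta_3)$ of $\delta$, where $\delta_i$ is the number of node germs of type $(i)$ in the sense of Definition~\ref{def-nodegerm}, relabel the indices so that $i_1>\dots>i_{\delta_1}$ carry a type-$(1)$ germ, $i_{\delta_1+1}>\dots>i_{\delta_1+\delta_2}$ a type-$(2)$ germ, and $i_{\delta_1+\delta_2+1}>\dots>i_\delta$ a type-$(3)$ germ, and compute the leading term in the $i_j$ of the contribution of all such floor plans, treating the spacing conditions $i_{j+1}>i_j+1$ and the special requirements at the indices $1$ and $d$ as lower-order corrections exactly as in the complex proof. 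The only genuinely new ingredient is the per-floor contribution $g_t(i_j)$ of a node germ of type $t$ on $C_{i_j}$ when counted with real multiplicity: several sub-kinds of node germs now drop out because Definition~\ref{def-realmult1P3} assigns them real multiplicity $0$ (the midpoint of a weight-$2$ edge, a horizontal end of weight~$2$, and a right string meeting a diagonal bounded edge), and one sub-kind contributes only lower order in $d$.

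For type~$(2)$ only a diagonal end of weight~$2$ survives: there are $i_j+O(1)$ such ends on $C_{i_j}$, each of real multiplicity $2(i_j-1)$, so $g_2(i_j)=2i_j^2+O(i_j)$. For type~$(3)$, a left string aligns with one of the $\tfrac12 i_j^2+O(i_j)$ horizontal bounded edges of the smooth floor-decomposed curve $C_{i_j+1}$; since all vertical lines appear in the dual subdivision of a floor-decomposed curve, column $k$ carries exactly $i_j+1-k$ of these edges, whence exactly $\tfrac14 i_j^2+O(i_j)$ of them satisfy the parity condition $i_j-k\equiv 0\bmod 2$ of Definition~\ref{def-realmult1P3}(5) and contribute real multiplicity $2$; a right string meeting a vertex of $C_{i_j-1}$ not adjacent to a diagonal edge still has real multiplicity $1$ and there are $i_j^2+O(i_j)$ of these, while a right string meeting a diagonal bounded edge is of lower order; hence $g_3(i_j)=\tfrac12 i_j^2+i_j^2+O(i_j)=\tfrac32 i_j^2+O(i_j)$. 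For type~$(1)$ only node germs dual to a parallelogram survive; by the count in the proof of Proposition~\ref{prop-complexcurvecount} there are $i_j^2+O(i_j)$ of them on $C_{i_j}$, of the two kinds listed in Definition~\ref{def-realmult1P3}(1). Evaluating the parities $\tfrac12(3i_j+2+2k+2l)(i_j-1)$ and $\tfrac12(i_j+2+2l)(i_j-1)$ modulo $2$ shows that for even $i_j$ exactly half of these parallelograms have real multiplicity $2$, for $i_j\equiv 3\bmod 4$ all of them do, and for $i_j\equiv 1\bmod 4$ none of them do; so $g_1(i_j)$ equals $i_j^2$, $2i_j^2$ or $0$ according to $i_j\bmod 4$, up to $O(i_j)$.

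Granting these computations, the remaining bookkeeping mirrors the complex case. Using Faulhaber's formula (and splitting $g_1$ according to $i\bmod 4$) one gets $\sum_{i=1}^d g_1(i)=\tfrac{d^3}{3}+O(d^2)$, $\sum_{i=1}^d g_2(i)=\tfrac{2d^3}{3}+O(d^2)$ and $\sum_{i=1}^d g_3(i)=\tfrac{d^3}{2}+O(d^2)$, so summing the products $\prod_j g_{t(j)}(i_j)$ over the admissible index choices, the leading contribution of the partition $(\delta_1,\delta_2,\delta_3)$ is
$$\frac{1}{\delta_1!}\Big(\frac{d^3}{3}\Big)^{\delta_1}\frac{1}{\delta_2!}\Big(\frac{2d^3}{3}\Big)^{\delta_2}\frac{1}{\delta_3!}\Big(\frac{d^3}{2}\Big)^{\delta_3}=\frac{(1/3)^{\delta_1}(2/3)^{\delta_2}(1/2)^{\delta_3}}{\delta_1!\,\delta_2!\,\delta_3!}\,d^{3\delta}+O(d^{3\delta-1}).$$
Summing over all $(\delta_1,\delta_2,\delta_3)$ with $\delta_1+\delta_2+\delta_3=\delta$ and applying the multinomial theorem as in the proof of Theorem~\ref{thm-floorplanasymptoticP3} gives
$$N_{\delta,\R,s}^{\PP^3,\floor}(d)=\frac{d^{3\delta}}{\delta!}\Big(\frac13+\frac23+\frac12\Big)^{\delta}+O(d^{3\delta-1})=\frac{1}{\delta!}\Big(\frac32\,d^3\Big)^{\delta}+O(d^{3\delta-1}).$$

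The main obstacle is the type-$(1)$ parity count. One must match the two kinds of parallelogram circuits in Definition~\ref{def-realmult1P3}(1) to the two ways in which a non-fixed point of $C_{i_j}$ aligns with a neighbouring divisor in the count underlying Proposition~\ref{prop-complexcurvecount}, and then carry out the parity analysis of the two expressions above. The delicate point is that the surviving fraction is not literally $\tfrac12$ for each fixed $i_j$: for odd $i_j$ the expressions are independent of $k$ and $l$, so the entire type-$(1)$ contribution on $C_{i_j}$ is either $\approx 2i_j^2$ or $0$ according to $i_j\bmod 4$, and only after summing over $i_j$ does one recover the average and hence the clean coefficient $d^3/3$. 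Everything else --- the $O(d^{3\delta-1})$ error control, the spacing conditions, and the boundary indices $1$ and $d$ --- is routine and parallels the complex proof.
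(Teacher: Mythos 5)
Your proposal is correct and follows essentially the same route as the paper's proof: the same grouping of floor plans by the partition $(\delta_1,\delta_2,\delta_3)$ of node-germ types, the same per-floor leading factors ($2i_j^2$ for type (2), $\tfrac32 i_j^2$ for type (3), and $d^3/3$ per type-(1) index after summation), and the same Faulhaber-plus-multinomial bookkeeping giving $\frac{1}{\delta!}\left(\tfrac13+\tfrac23+\tfrac12\right)^{\delta}d^{3\delta}=\frac{1}{\delta!}\left(\tfrac32 d^3\right)^{\delta}$. The only divergence is your finer mod-$4$ parity analysis of the type-(1) parallelogram germs of Definition \ref{def-realmult1P3}(1) (all-or-nothing for odd $i_j$, exactly half for even $i_j$), where the paper simply asserts that half of them carry nonzero real multiplicity; since your refinement averages to the same $d^3/3$ per index, it is a more careful justification of the same step rather than a different approach.
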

\begin{proof}
The proof is analogous to the proof of Theorem \ref{thm-floorplanasymptoticP3}, we just have to adapt the numbers to the real multiplicities.

Consider curves $C_{i_j}$ with a node of type (1).
Going through Definition \ref{def-realmult1P3}, we can see that for node germs which are midpoints of an edge of weight $2$, the real multiplicity is $0$. From the proof of Proposition \ref{prop-complexcurvecount}, we can see that these curves account for $2i_j^2+O(i_j)$ in the total complex count. The ones with node germ dual to a parallelogram contribute the remaining $i_j^2+O(i_j)$ in the complex count, but here, only half of them come with a nonzero real multiplicity, being $2$.  Thus, for each $i_j$, $j=1,\ldots,\delta_1$, we obtain a factor of $i_j^2$.

The number of $C_{i_j}$ with a node germ of type (2) is $2i_j$, since we have that many possible locations for horizontal or diagonal ends of weight $2$. However, now only the ones with a diagonal end contribute by Definition \ref{def-realmult1P3},
 and their multiplicity for each is $2i_j+O(1)$, so altogether we obtain a factor of $2i_j^2$.

Finally, consider a curve $C_{i_j}$ with a left string.
It can align with any horizontal bounded edge of $C_{i_j+1}$, but only half of the choices lead to a nonzero multiplicity of $2$.
Thus, we have a contribution of $\frac{1}{2}i_j^2$ for these. If $C_{i_j}$ has a right string, we can neglect alignments with a diagonal bounded edge as before and only consider the possibilities to meet a vertex not adjacent to a diagonal edge of $C_{i_j-1}$, of which there are $i_j^2+O(i_j)$ many, each counting with multiplicity one. Altogether, we obtain a factor of $\frac{1}{2}i_j^2+i_j^2=\frac{3}{2}i_j^2$ for all possibilities for $C_{i_j}$ which have a string, counted with multiplicity.

Thus, the leading term of the contribution to the real count of all floor plans with fixed partition and fixed choice of indices as above is
$$ 1^{\delta_1}\cdot \prod_{j=1}^{\delta_1} i_j^2\cdot 2^{\delta_2}\cdot \prod_{j=\delta_1+1}^{\delta_1+\delta_2} i_j^2\cdot \frac{3}{2}^{\delta_3}\cdot \prod_{j=\delta_1+\delta_2+1}^{\delta}i_j^2.$$

We have to sum this expression over all choices of indices as above, and using Faulhaber's formula again we obtain
 $$1^{\delta_1} \left(\frac{1}{3}\right)^{\delta_1} \frac{1}{\delta_1!}\cdot  2^{\delta_2}\left(\frac{1}{3}\right)^{\delta_2} \frac{1}{\delta_2!}\cdot
 \left(\frac{3}{2}\right)^{\delta_3}\left(\frac{1}{3}\right)^{\delta_3}\frac{1}{\delta_3!} d^{3\delta}=  \left(\frac{1}{3}\right)^{\delta}
 2^{\delta_2}\cdot \left(\frac{3}{2}\right)^{\delta_3}\cdot \frac{1}{\delta_1!\delta_2!\delta_3!}\cdot d^{3\delta}.$$

We sum the coefficient of this expression over all $(\delta_1,\delta_2,\delta_3)$ and obtain

\begin{align*}
&\left(\frac{1}{3}\right)^{\delta} \sum_{\delta_1=0}^{\delta}\sum_{\delta_2=0}^{\delta-\delta_1} 2^{\delta_2}\left(\frac{3}{2}\right)^{\delta-\delta_1-\delta_2}\cdot \frac{1}{\delta_1!\delta_2!(\delta-\delta_1-\delta_2)!}\\=&
\left(\frac{1}{3}\right)^{\delta} \sum_{\delta_1=0}^{\delta}\frac{1}{\delta_1!(\delta-\delta_1!)}\sum_{\delta_2=0}^{\delta-\delta_1} 2^{\delta_2}\left(\frac{3}{2}\right)^{\delta-\delta_1-\delta_2}\cdot \binom{\delta-\delta_1}{\delta_2}\\=&
\left(\frac{1}{3}\right)^{\delta} \sum_{\delta_1=0}^{\delta}\frac{1}{\delta_1!(\delta-\delta_1!)}\left(2+\frac{3}{2}\right)^{\delta-\delta_1}=
\left(\frac{1}{3}\right)^{\delta} \frac{1}{\delta!}\sum_{\delta_1=0}^{\delta}\binom{\delta}{\delta_1}
\left(\frac{7}{2}\right)^{\delta-\delta_1}\\=&
\left(\frac{1}{3}\right)^{\delta}\frac{1}{\delta!} \left(1+\frac{7}{2}\right)^\delta = \left(\frac{1}{3}\cdot \frac{9}{2}\right)^{\delta}\frac{1}{\delta!}= \left(\frac{3}{2}\right)^{\delta}\frac{1}{\delta!}.
\end{align*}

\end{proof}

This proves Theorem \ref{thm-realsurfacesconstructive}.

\begin{theorem}\label{thm-floorplanasymptoticP2P1}
The number $ N_{\delta,\C}^{\PP^1\times\PP^2,\floor}(d,e)$ of $\delta$-nodal floor plans for surfaces of bidegree $(d,e)$ satisfies $$N_{\delta,\C}^{\PP^1\times\PP^2,\floor}(d,e)= (12de^2)^\delta/\delta!+O(d^{\delta-1}e^{2\delta})+O(d^{\delta}e^{2\delta-1}).$$
\end{theorem}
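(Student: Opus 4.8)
The plan is to mirror the proof of Theorem~\ref{thm-floorplanasymptoticP3} almost line for line, replacing the combinatorics of plane tropical curves of degree $i$ (which enter there) by those of plane tropical curves of fixed degree $e$, and tracking the two parameters $d$ and $e$ separately. First I would fix an ordered partition $(\delta_1,\delta_2,\delta_3)$ of $\delta$ recording how many node germs are of type (1), (2), (3) in Definition~\ref{def-nodegerm}, together with a choice of indices $d\ge i_\delta\ge\cdots\ge i_1\ge 0$ distributing the node germs among the floors; as before, the conditions $i_{j+1}>i_j+1$ and the special requirements at $i=0,d$ only affect lower-order terms, so I may simplify the summation ranges to independent sums $\sum_{i_j=0}^{d}$. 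The key point is that, unlike in $\PP^3$, the base curves $C_i$ all have the \emph{fixed} degree $e$, so the per-floor counts will be polynomials in $e$ with coefficients independent of $i_j$, and the only $d$-dependence comes from the number of floors that may carry each type of node germ.

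The heart of the computation is to determine, for a node germ sitting on the floor $C_{i_j}$, the contribution of that floor to the (complex) count, as a function of $e$. For type~(1), i.e.\ a node germ dual to a parallelogram or the midpoint of a weight-$2$ edge, I would use Proposition~\ref{prop-complexcurvecount}: the number of plane tropical curves of bidegree $(?,e)$ --- here one should think of $C_{i_j}$ as a curve of degree $e$ in $\PP^2$ satisfying horizontally stretched point conditions with one node --- carrying such a germ, weighted by the local multiplicity (which by Definition~\ref{def-complexmultP2P1} is twice the multiplicity of the curve), gives a factor that is $6e^2+O(e)$ per type-(1) index. Wait --- more carefully, since $C_i$ has degree $e$, Proposition~\ref{prop-complexcurvecount} in the degree-$e$ case gives $3(e-1)^2=3e^2+O(e)$ curves with one node, and doubling the relevant multiplicities yields $6e^2+O(e)$. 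For type~(2), a horizontal or diagonal end of weight $2$: by Definition~\ref{def-complexmultP2P1}(3),(4) each has multiplicity $2e$, and there are $O(e)$ positions for such an end on a degree-$e$ curve, wait --- actually there are $e$ horizontal and $e$ diagonal positions, so $2e$ positions each of multiplicity $2e$, giving $4e^2+O(e)$ per type-(2) index. For type~(3), a left or right string on $C_{i_j}$: the string must align with a bounded edge of the neighbouring floor $C_{i_j\pm1}$, which is a smooth plane curve of degree $e$, so there are $O(e^2)$ such edges (precisely $\binom{e}{2}+O(e)$ horizontal or diagonal bounded edges, plus $O(e^2)$ vertices not adjacent to a diagonal edge for the right-string case), each of bounded multiplicity, yielding a factor $c\,e^2+O(e)$; summing the left-string contribution $e^2$ and right-string contribution $e^2$ gives $2e^2$ per type-(3) index, exactly as in the $\PP^3$ proof but with $i_j$ replaced by $e$.

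Having established that the contribution of a fixed partition and fixed indices has leading term
$$6^{\delta_1}\cdot 4^{\delta_2}\cdot 2^{\delta_3}\cdot e^{2\delta},$$
independent of the $i_j$, I would then sum over the indices: each of the $\delta$ indices ranges (up to lower-order corrections) over $\{0,1,\dots,d\}$ independently, contributing a factor $d+1=d+O(1)$ per index, hence a total factor $d^\delta+O(d^{\delta-1})$; there is no Faulhaber summation of powers here because the summand does not depend on $i_j$. Thus the partition $(\delta_1,\delta_2,\delta_3)$ contributes $6^{\delta_1}4^{\delta_2}2^{\delta_3}\,d^\delta e^{2\delta}/(\delta_1!\delta_2!\delta_3!)$ to leading order (the factorials coming from the fact that permuting the indices within a type gives the same floor plan, or equivalently from restricting to ordered indices). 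Finally I would sum over all partitions using the multinomial theorem:
$$\sum_{\delta_1+\delta_2+\delta_3=\delta}\frac{6^{\delta_1}4^{\delta_2}2^{\delta_3}}{\delta_1!\delta_2!\delta_3!}
=\frac{(6+4+2)^\delta}{\delta!}=\frac{12^\delta}{\delta!},$$
which yields $N_{\delta,\C}^{\PP^1\times\PP^2,\floor}(d,e)=(12de^2)^\delta/\delta!+(\text{lower order})$, as claimed. The lower-order error is $O(d^{\delta-1}e^{2\delta})$ from the index summation and $O(d^\delta e^{2\delta-1})$ from the per-floor leading-coefficient errors, matching the stated bound.

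The step I expect to be the main obstacle is the careful bookkeeping of the per-floor counts for type~(3) node germs and the verification that the ``special'' indices $i=0$ and $i=d$, together with the spacing constraints $i_{j+1}>i_j+1$ and the various conditions (\ref{indexdb}), (\ref{index1b}) in Definition~\ref{def-floorplanP1P2}, genuinely only perturb lower-order terms; this is conceptually routine but requires the same kind of attention as in the $\PP^3$ case, now with the additional subtlety that one must keep the $e$-degree of each contribution exactly right (leading term $e^2$ and not $e$) so that the final answer has the correct $e^{2\delta}$ and the error terms separate cleanly into $O(d^{\delta-1}e^{2\delta})$ and $O(d^\delta e^{2\delta-1})$. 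I would also double-check the right-string count: on a plane curve of degree $e$ the number of vertices not adjacent to a diagonal edge is $e^2+O(e)$ (two interior triangular-lattice-point families), which is what produces the $2e^2$ combined string factor rather than something smaller.
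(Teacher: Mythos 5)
Your proposal is correct and follows essentially the same route as the paper's proof: the same per-floor factors $6e^2$, $4e^2$, $2e^2$ for node germs of types (1), (2), (3) (with the right string's alignment with a diagonal bounded edge discarded as lower order), the observation that the summand is independent of the indices $i_j$ so the index sum just gives $d^\delta/(\delta_1!\delta_2!\delta_3!)$ up to lower order, and the final multinomial summation producing $12^\delta/\delta!$ — the paper merely writes the last step as iterated binomial sums rather than invoking the multinomial theorem directly. The error-term bookkeeping also matches the paper's.
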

\begin{proof}
We proceed as in the proof of Theorem \ref{thm-floorplanasymptoticP3}.

Now the number of $C_{i_j}$ satisfying the conditions with a node germ of type (1) is $3e^2+O(e)$, and the complex multiplicity of such a node germ is defined to be twice the multiplicity of the curve (see Definition \ref{def-complexmultP2P1}). Thus, for each $i_j$, $j=1,\ldots,\delta_1$, we obtain a factor of $6e^2$.

The number of $C_{i_j}$ with a node germ of type (2) is $2e$, since we have that many possible locations for horizontal or diagonal ends of weight $2$. The multiplicity for each is $2e$ by Definition \ref{def-complexmultP2P1}, so altogether we obtain a factor of $4e^2$.

Finally, consider a curve $C_{i_j}$ with a left string.
It can align with any horizontal bounded edge of $C_{i_j+1}$, and there are $\frac{1}{2}e^2+O(i_j)$ many. For each, the local complex multiplicity is $2$, and so we have a contribution of $e^2$ for these. If $C_{i_j}$ has a right string, we disregard alignments with a diagonal bounded edge and only consider alignements with a vertex not adjacent to a diagonal edge, of which there are $e^2+O(e)$ many. Altogether, we obtain a factor of $e^2+e^2=2e^2$ for all possibilities for $C_{i_j}$ which have a string, counted with multiplicity.

Thus, the leading term of the contribution of all floor plans with fixed partition and fixed choice of indices as above is
$$ (6e^2)^{\delta_1}\cdot (4e^2)^{\delta_2}\cdot (2e^2)^{\delta_3}.$$

Again, we sum this over all choices of indices, obtaining:
\begin{align*}
&\sum_{i_1=1}^{d} \sum_{i_2=1}^{i_1} \ldots \sum_{i_{\delta_1}=1}^{i_{\delta_1-1}} \sum_{i_{\delta_1+1}=1}^{d} \ldots \sum_{i_{\delta_1+\delta_2}=1}^{i_{\delta_1+\delta_2-1}} \sum_{i_{\delta_1+\delta_2+1}=1}^{d} \ldots \sum_{i_{\delta}=1}^{i_{\delta-1}}
 (6e^2)^{\delta_1}\cdot (4e^2)^{\delta_2}\cdot (2e^2)^{\delta_3} \\ =& 6^{\delta_1}4^{\delta_2}2^{\delta_3}\frac{1}{\delta_1!\delta_2!\delta_3!}(de^2)^\delta.
 \end{align*}

What remains to be done is to sum the coefficient of this expression over all $(\delta_1,\delta_2,\delta_3)$:

\begin{align*}
&\sum_{\delta_1=0}^{\delta}\sum_{\delta_2=0}^{\delta-\delta_1} 6^{\delta_1}4^{\delta_2}2^{\delta-\delta_1-\delta_2} \cdot \frac{1}{\delta_1!\delta_2!(\delta-\delta_1-\delta_2)!} \\&=
\frac{1}{\delta!}\sum_{\delta_1=0}^{\delta}\frac{\delta!}{\delta_1!(\delta-\delta_1)!} 6^{\delta_1} \sum_{\delta_2=0}^{\delta-\delta_1} 4^{\delta_2}2^{\delta-\delta_1-\delta_2} \cdot \frac{(\delta-\delta_1)!}{\delta_1!\delta_2!(\delta-\delta_1-\delta_2)!}
\\=& \frac{1}{\delta!}\sum_{\delta_1=0}^{\delta}\frac{\delta!}{\delta_1!(\delta-\delta_1)!} 6^{\delta_1} (4+2)^{\delta-\delta_1} = \frac{1}{\delta!} (6+6)^\delta = \frac{1}{\delta!}12^\delta.
\end{align*}
\end{proof}
This asymptotics in the complex case was known to the experts. However, now we use our method to consider the real case:

\begin{theorem}\label{thm-realfloorplanasymptoticP2P1}
Let $s=((+)^3)^n$ be the sign vector with all signs positive, and let integers $d,e,\delta$ satisfy $0<\delta\le\frac{1}{2}\min\{d,e\}$.
Then
the number of $\delta$-nodal floor plans of bidegree $(d,e)$, counted with real multiplicity w.r.t. $s$, satisfies:
\begin{equation}N_{\delta,\R,s}^{\PP^1\times\PP^2,\floor}(d,e) \ge\frac{1}{\delta!}\left(\frac{11}{2}de^2\right)^\delta+O(d^{\delta-1}e^{2\delta})
+O(d^\delta e^{2\delta-1}),\label{ne7}\end{equation}
where $\delta$ is fixed and $\min\{d,e\}\to\infty$.
\end{theorem}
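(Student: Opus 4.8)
The plan is to follow the structure of the proof of Theorem~\ref{thm-realfloorplanasymptoticP3positive}, adapting the local real multiplicities from Definition~\ref{def-realmult1P1P2} and using the asymptotic bookkeeping already developed for the complex count in Theorem~\ref{thm-floorplanasymptoticP2P1}. As before, I fix an ordered partition $(\delta_1,\delta_2,\delta_3)$ of $\delta$, where $\delta_i$ counts node germs of type $(i)$ in Definition~\ref{def-nodegerm}, relabel the indices $i_j$ accordingly, and fix a choice of these indices; the contribution is then summed over all admissible choices, keeping only the leading order. Since $e\equiv 0\bmod 4$ (the standing assumption in this subsection), Definition~\ref{def-realmult1P1P2}(1) gives that every node germ dual to a parallelogram has real multiplicity $2$, exactly as in the complex case; this is the crucial point where the $\PP^1\times\PP^2$ situation differs from $\PP^3$, where only half of the parallelogram germs survived.

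The key step is to recompute the three per-floor factors. For type (1): by Proposition~\ref{prop-complexcurvecount} there are $3e^2+O(e)$ plane curves of degree $e$ with a type-(1) node germ, of which the midpoint-of-weight-$2$-edge germs (contributing $2e^2+O(e)$ of these in the complex count and real multiplicity $0$ by Definition~\ref{def-realmult1P1P2}(2)) drop out, while the parallelogram germs (contributing $e^2+O(e)$, now with real multiplicity $2$) remain; this gives a factor $2e^2$. For type (2): of the $2e$ possible weight-$2$ ends only the $e$ diagonal ones survive, each with real multiplicity $2e$ by Definition~\ref{def-realmult1P1P2}(4), giving a factor $2e^2$. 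For type (3): a left string aligns with one of $\tfrac12 e^2+O(e)$ horizontal bounded edges of $C_{i_j+1}$, and by Definition~\ref{def-realmult1P1P2}(5) exactly half of these have real multiplicity $2$, contributing $\tfrac12 e^2$; a right string aligning with a vertex not adjacent to a diagonal edge contributes $e^2+O(e)$ with multiplicity $1$ (the diagonal-bounded-edge alignments are negligible by Definition~\ref{def-realmult1P1P2}(6)), for a total string factor of $\tfrac32 e^2$. Thus the leading term for a fixed partition and fixed indices is $(2e^2)^{\delta_1}(2e^2)^{\delta_2}(\tfrac32 e^2)^{\delta_3}$. Summing over the $i_j$ via Faulhaber's formula $\sum_{h=1}^d h^0 = d+O(1)$ (the per-floor factors no longer depend on $i_j$, so each of the $\delta$ nested sums contributes a factor $d$ and a factorial $1/\delta_i!$ as in Theorem~\ref{thm-floorplanasymptoticP2P1}) yields $2^{\delta_1}2^{\delta_2}\bigl(\tfrac32\bigr)^{\delta_3}\tfrac{1}{\delta_1!\delta_2!\delta_3!}(de^2)^\delta$.

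Finally I sum the coefficient over all $(\delta_1,\delta_2,\delta_3)$ with $\delta_1+\delta_2+\delta_3=\delta$:
\begin{align*}
\sum_{\delta_1=0}^{\delta}\sum_{\delta_2=0}^{\delta-\delta_1} 2^{\delta_1}2^{\delta_2}\Bigl(\tfrac32\Bigr)^{\delta-\delta_1-\delta_2}\frac{1}{\delta_1!\delta_2!(\delta-\delta_1-\delta_2)!}
&= \frac{1}{\delta!}\sum_{\delta_1=0}^{\delta}\binom{\delta}{\delta_1}2^{\delta_1}\Bigl(2+\tfrac32\Bigr)^{\delta-\delta_1}\\
&= \frac{1}{\delta!}\Bigl(2+\tfrac72\Bigr)^{\delta} = \frac{1}{\delta!}\Bigl(\tfrac{11}{2}\Bigr)^{\delta},
\end{align*}
which gives the leading term $\tfrac{1}{\delta!}\bigl(\tfrac{11}{2}de^2\bigr)^\delta$ as claimed; the error terms $O(d^{\delta-1}e^{2\delta})+O(d^\delta e^{2\delta-1})$ come, exactly as in Theorem~\ref{thm-floorplanasymptoticP2P1}, from the subleading corrections in the per-floor curve counts and from ignoring the ``sufficiently apart'' constraints on the indices and the special cases $i_1\in\{0,d\}$. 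The inequality (rather than equality) in~\eqref{ne7} reflects that Proposition~\ref{prop-floorplandegdrealliftsP1P2} guarantees only \emph{at least} $\mult_{\R,s}(F)$ real lifts for each floor plan.

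I expect the main obstacle to be purely bookkeeping rather than conceptual: one must be careful that, in contrast to the $\PP^3$ case, the parallelogram germs here keep their full real multiplicity $2$ because of the hypothesis $e\equiv 0\bmod 4$ (invoked via Definition~\ref{def-realmult1P1P2}(1) and justified in Proposition~\ref{prop-floorplandegdrealliftsP1P2}), so the type-(1) factor is $2e^2$ and not $e^2$; getting this single factor right is what changes the final constant from the $\PP^3$-analogue's $\tfrac92$ to $\tfrac{11}{2}$. Everything else is a direct transcription of the two preceding proofs.
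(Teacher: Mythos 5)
Your computation for the case $e\equiv 0\bmod 4$ is correct and coincides with the paper's: the per-floor factors $2e^2$ (parallelogram germs, which keep full real multiplicity $2$ by Definition \ref{def-realmult1P1P2}(1)), $2e^2$ (only diagonal weight-$2$ ends survive), and $\tfrac32 e^2$ (strings), the Faulhaber-type summation over indices, and the final binomial summation giving $\tfrac{1}{\delta!}\bigl(\tfrac{11}{2}\bigr)^\delta$ are exactly the steps in the paper.

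However, there is a genuine gap: you treat $e\equiv 0\bmod 4$ as ``the standing assumption in this subsection,'' but it is not a hypothesis of the theorem. The theorem only assumes $0<\delta\le\tfrac12\min\{d,e\}$; the congruence condition was imposed solely for Definition \ref{def-realmult1P1P2} and Proposition \ref{prop-floorplandegdrealliftsP1P2}. Your argument therefore proves the statement only for $e$ divisible by $4$ and says nothing for the remaining residues, for which the floor-plan real multiplicities are not even defined. The paper closes this case by setting $e'=4[e/4]$, applying the bound just established to bidegree $(d,e')$, observing that $e'=e+O(1)$ so the asymptotics $\tfrac{1}{\delta!}\bigl(\tfrac{11}{2}d(e')^2\bigr)^\delta$ agree with $\tfrac{1}{\delta!}\bigl(\tfrac{11}{2}de^2\bigr)^\delta$ up to the stated error terms, and invoking Lemma \ref{nl1} (the extension of point configurations preserving real $\delta$-nodal surfaces, proved as in \cite[Theorem 5.3]{MMS15}) to transfer the lower bound from the smaller Newton polytope to that of bidegree $(d,e)$. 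This reduction is also the real reason the theorem asserts an inequality ``$\ge$'' rather than the equality obtained in the $\PP^3$ case; your attribution of the inequality to Proposition \ref{prop-floorplandegdrealliftsP1P2} (``at least $\mult_{\R,s}(F)$ real lifts'') conflates the comparison between floor plans and surfaces with the statement about the floor-plan count itself. To complete your proof you need to add the $e\not\equiv 0\bmod 4$ reduction via Lemma \ref{nl1}.
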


\begin{proof}
Suppose, first, that $e\equiv0\mod4$.

Similarly to the proof of Theorem \ref{thm-realfloorplanasymptoticP3positive}, we have to adapt the proof of Theorem \ref{thm-floorplanasymptoticP2P1} to the real multiplicities of Definition \ref{def-realmult1P1P2}.

Now the number of $C_{i_j}$ satisfying the conditions with a node germ of type (1) which is a paralellogram is $e^2+O(e)$, and the real multiplicity of such a node germ is defined to be twice the multiplicity of the curve (see Definition \ref{def-realmult1P1P2}). Thus, for each $i_j$, $j=1,\ldots,\delta_1$, we obtain a factor of $2e^2$.

The number of $C_{i_j}$ with a node germ of type (2) is $2e$, since we have that many possible locations for horizontal or diagonal ends of weight $2$. The multiplicity is $0$ for the horizontal ends and $2e$ for the diagonal ends by Definition \ref{def-realmult1P1P2}, so altogether we obtain a factor of $2e^2$.

Finally, consider a curve $C_{i_j}$ with a left string.
It can align with any horizontal bounded edge of $C_{i_j+1}$, and there are $\frac{1}{2}e^2+O(i_j)$ many, but only half of them give a nonzero contribution of $2$. We obtain $\frac{1}{2}e^2$ for these. If $C_{i_j}$ has a right string, we disregard alignements with a diagonal bounded edge and only consider alignements with a vertex not adjacent to a diagonal edge, of which there are $e^2+O(e)$ many. Altogether, we obtain a factor of $\frac{1}{2}e^2+e^2=\frac{3}{2}e^2$ for all possibilities for $C_{i_j}$ which have a string, counted with multiplicity.

Thus, the leading term of the contribution of all floor plans with fixed partition and fixed choice of indices as above is
$$ (2e^2)^{\delta_1}\cdot (2e^2)^{\delta_2}\cdot \left(\frac{3}{2}e^2\right)^{\delta_3}.$$

Again, we sum this over all choices of indices, obtaining:
\begin{align*}
&\sum_{i_1=1}^{d} \sum_{i_2=1}^{i_1} \ldots \sum_{i_{\delta_1}=1}^{i_{\delta_1-1}} \sum_{i_{\delta_1+1}=1}^{d} \ldots \sum_{i_{\delta_1+\delta_2}=1}^{i_{\delta_1+\delta_2-1}} \sum_{i_{\delta_1+\delta_2+1}=1}^{d} \ldots \sum_{i_{\delta}=1}^{i_{\delta-1}}
 (2e^2)^{\delta_1}\cdot (2e^2)^{\delta_2}\cdot \left(\frac{3}{2}e^2\right)^{\delta_3} \\ =& 2^{\delta_1}2^{\delta_2}\left(\frac{3}{2}\right)^{\delta_3}\frac{1}{\delta_1!\delta_2!\delta_3!}(de^2)^\delta.
 \end{align*}

What remains to be done is to sum the coefficient of this expression over all $(\delta_1,\delta_2,\delta_3)$:

\begin{align*}
&\sum_{\delta_1=0}^{\delta}\sum_{\delta_2=0}^{\delta-\delta_1} 2^{\delta_1}2^{\delta_2}\frac{3}{2}^{\delta-\delta_1-\delta_2} \cdot \frac{1}{\delta_1!\delta_2!(\delta-\delta_1-\delta_2)!} \\&=
\frac{1}{\delta!}\sum_{\delta_1=0}^{\delta}\frac{\delta!}{\delta_1!(\delta-\delta_1)!} 2^{\delta_1} \sum_{\delta_2=0}^{\delta-\delta_1} 2^{\delta_2}\left(\frac{3}{2}\right)^{\delta-\delta_1-\delta_2} \cdot \frac{(\delta-\delta_1)!}{\delta_1!\delta_2!(\delta-\delta_1-\delta_2)!}
\\=& \frac{1}{\delta!}\sum_{\delta_1=0}^{\delta}\frac{\delta!}{\delta_1!(\delta-\delta_1)!} 2^{\delta_1} \left(2+\frac{3}{2}\right)^{\delta-\delta_1} = \frac{1}{\delta!} \left(2+\frac{7}{2}\right)^\delta = \frac{1}{\delta!}\left(\frac{11}{2}\right)^\delta.
\end{align*}

%
%

Suppose that $e\not\equiv0\mod4$. Set $e'=4[e/4]$. Then, for the data $d,e',\delta$, we have
$$N_{\delta,\R,s}^{\PP^1\times\PP^2,\floor}(d,e') \ge\frac{1}{\delta!}\left(\frac{11}{2}d(e')^2\right)^\delta+O(d^{\delta-1}(e')^{2\delta})
+O(d^\delta (e')^{2\delta-1})$$
$$\qquad\qquad\qquad \sim \frac{1}{\delta!}\left(\frac{11}{2}de^2\right)^\delta+O(d^{\delta-1}e^{2\delta})
+O(d^\delta e^{2\delta-1})\ .$$ Thus, the following lemma completes the proof.
\end{proof}

\begin{lemma}\label{nl1}
Let $\Delta'\subset\Delta\subset\R^3$ be nondegenerate convex lattice polytopes, $\delta\ge1$. Suppose that there exists a configuration $\bp'$ of $|\Delta'\cap\Z^3|-1-\delta$ points in $(\R^*)^3$ such that the linear system of surfaces with Newton polytope $\Delta'$ passing through $\bp'$ contains $N$ real surfaces with $\delta$ real nodes as their only singularities (in the big torus), and each of these surfaces corresponds to a point of a transversal intersection of the linear system with the family of $\delta$-nodal surfaces with Newton polytope $\Delta'$. Then one can extend $\bp'$ up to a configuration $\bp$ of
$|\Delta\cap\Z^3|-1-\delta$ points in $(\R^*)^3$ so that the linear system of surfaces with Newton polygon $\Delta$ passing through $\bp$ will contain (at least) $N$ real surfaces with $\delta$ real nodes as their only singularities.
\end{lemma}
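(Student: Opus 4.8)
The plan is to realize the given $\Delta'$-linear system as a limit of $\Delta$-linear systems cut out by point configurations that degenerate as a parameter $t\to0$, and then to carry the $N$ real $\delta$-nodal surfaces along this family by a transversality (implicit function theorem) argument.

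Write $V\supseteq V'$ for the spaces of polynomials supported on $\Delta\cap\Z^3$, resp.\ $\Delta'\cap\Z^3$, set $m=\dim V-\dim V'=|\Delta\cap\Z^3|-|\Delta'\cap\Z^3|$, and put $n=|\Delta\cap\Z^3|-1-\delta$, $n'=|\Delta'\cap\Z^3|-1-\delta$. In $\PP(V)$ let $\Sigma$ be the closure of the locus of surfaces having exactly $\delta$ nodes in $(\C^\ast)^3$ and no further singularity there; then $\Sigma\cap\PP(V')$ is the analogous locus $\Sigma'$ for $\Delta'$. Let $L'\subseteq\PP(V')$ be the linear system of $\Delta'$-surfaces through $\bp'$; by hypothesis $\dim L'=\delta$ and $L'$ meets $\Sigma'$ transversally in at least the real $\delta$-nodal surfaces $S'_1,\dots,S'_N$. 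The first point I would record is that transversality of $L'$ and $\Sigma'$ at $S'_k$ inside $\PP(V')$ is equivalent to transversality of $L'$ and $\Sigma$ at $S'_k$ inside the larger $\PP(V)$: the tangent space to $\Sigma$, resp.\ $\Sigma'$, at a $\delta$-nodal surface is the space of polynomials in $V$, resp.\ $V'$, vanishing at its $\delta$ nodes, and the nodes of $S'_k$ are the same points in either picture, so in both cases transversality says exactly that $S'_k$ is, up to a scalar, the only $\Delta'$-surface passing through $\bp'$ and through the $\delta$ nodes of $S'_k$. In particular each $S'_k$ is a reduced isolated point of $L'\cap\Sigma$ lying in the smooth locus of $\Sigma$.

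Next I would construct the degeneration. I list the lattice points of $\Delta$ outside $\Delta'$ as $v_1,\dots,v_m$ in an order in which $v_s$ is a vertex of $P_s:=\conv((\Delta\cap\Z^3)\setminus\{v_1,\dots,v_{s-1}\})$; such an order exists because, as long as $P_s\supsetneq\Delta'$, the polytope $P_s$ has a vertex outside $\Delta'$, and deleting a vertex of a lattice polytope removes exactly that lattice point from its lattice-point set. For each $s$ I pick $w_s$ in the interior of the outer normal cone of $P_s$ at $v_s$, generic real constants $\xi_{s,1},\xi_{s,2},\xi_{s,3}$, and a large exponent $A_s$ with $A_1\gg\dots\gg A_m\gg0$, and I let the $s$-th extra point escape along the monomial curve $p_{n'+s}(t)=(\xi_{s,1}t^{-A_sw_{s,1}},\xi_{s,2}t^{-A_sw_{s,2}},\xi_{s,3}t^{-A_sw_{s,3}})$. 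Setting $\bp(t)=(\bp',p_{n'+1}(t),\dots,p_{n}(t))$ and letting $L(t)\subseteq\PP(V)$ be the linear system of $\Delta$-surfaces through $\bp(t)$, a routine order-of-$t$ estimate (using the separation of scales to see that the $s$-th equation is dominated by its $v_s$-term) shows that as $t\to0^+$ the span of the $m$ moving evaluation functionals tends to $\mathrm{span}\{\ell_{v_1},\dots,\ell_{v_m}\}$, where $\ell_v(f)$ denotes the coefficient of $x^v$ in $f$; hence $L(t)\to L'$ in the Grassmannian of $\delta$-planes in $\PP(V)$, using that $L'$ already has the expected dimension so that the limiting intersection is proper.

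Finally I would transport the surfaces. Fix $k$. Near $(0,S'_k)$ the set $\{(t,S):S\in L(t)\cap\Sigma\}$ is cut out, inside the real-analytic family $\bigcup_t\{t\}\times L(t)$, by the $\delta$ local equations of $\Sigma$; at $t=0$ their differential along $L(0)=L'$ is invertible at $S'_k$ by the transversality of the second paragraph, so the real implicit function theorem yields, for all small $t>0$, a real surface $S_k(t)\in L(t)$ close to $S'_k$ with $S_k(t)\in\Sigma$ --- again a transversal, hence smooth, point of $\Sigma$, so $S_k(t)$ has exactly $\delta$ nodes in $(\C^\ast)^3$ and no other singularity there. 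Since $S_k(t)\to S'_k$ and an $A_1$-singularity does not split under deformation, each node of $S_k(t)$ is the unique nearby singular point, a real deformation of a real node of the real surface $S'_k$, hence real. Distinct $k$ give distinct $S_k(t)$, so fixing one small $t_0>0$ the configuration $\bp:=\bp(t_0)\subset(\R^\ast)^3$ of $n$ points carries at least $N$ real $\delta$-nodal surfaces with Newton polytope $\Delta$ (the Newton polytope being all of $\Delta$ for generic choices). The step I expect to be the main obstacle is the construction of the degeneration: arranging the escaping points and the hierarchy of scales so that the $m$ new conditions annihilate precisely the coefficients at the lattice points of $\Delta\setminus\Delta'$ --- including those not vertices of $\Delta$ --- while keeping the convergence $L(t)\to L'$ intact; the remaining ingredients (stability of transversal real intersections and non-splitting of nodes) are routine.
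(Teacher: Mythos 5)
Your overall strategy is close in spirit to what the paper relies on: the paper's own ``proof'' is a one-line reference to \cite[Theorem 5.3]{MMS15}, and most of your skeleton is sound. The equivalence of transversality in $\PP(V')$ and in $\PP(V)$ (both amount to $S'_k$ being, up to scalar, the only $\Delta'$-surface through $\bp'$ and its $\delta$ nodes), the ordering of the lattice points of $\Delta\setminus\Delta'$ so that each is successively a vertex, the escaping-point degeneration with a hierarchy $A_1\gg\dots\gg A_m$ forcing $L(t)\to L'$ (the row reduction you allude to does go through, since only finitely many inequalities on the ratios $A_s/A_{s+1}$ arise from the normal fans), and the implicit-function-theorem transport of transversal real intersection points together with the reality of the deformed nodes are all fine (modulo the unstated assumption that $\bp'$ imposes independent conditions on $V'$, i.e.\ $\dim L'=\delta$, which is implicit in transversality at isolated intersection points).

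The genuine gap is in the last step, where you conclude that $S_k(t)$ has ``exactly $\delta$ nodes in $(\C^\ast)^3$ and no other singularity there''. You defined $\Sigma$ as the \emph{closure} of that locus, and being a smooth point of a closure does not place a surface in the open stratum; what the implicit function theorem actually yields is membership in the intersection of the $\delta$ local discriminant branches, which controls singular points near the $\delta$ nodes of $S'_k$ and, by closedness, on any fixed compact subset of the torus --- but not near the toric boundary of the $\Delta$-toric variety. This is a real danger, not a formality: as a surface with Newton polytope $\Delta$, the limit $S'_k$ is degenerate along the faces of $\Delta$ away from $\Delta'$ (in the model case $\Delta'=d\Delta_3\subset\Delta=(d+1)\Delta_3$ the limit in the $\Delta$-linear system is $S'_k$ together with a boundary plane, with non-isolated singularities along their intersection), and perturbations of such degenerate boundary behaviour can acquire extra nodes lying in the torus with coordinates tending to the boundary as $t\to0$. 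Since the coefficients of $S_k(t)$ on $\Delta\setminus\Delta'$ are dictated by the linear conditions rather than being freely generic, you cannot rule this out by a naive genericity count; one must show that the chosen extension forces nondegenerate (hence nonsingular) behaviour of $S_k(t)$ over $\Delta\setminus\Delta'$. That control is exactly what the patchworking-style argument of \cite[Theorem 5.3]{MMS15}, which the paper invokes, supplies; without it your argument only shows that the extended system contains $N$ real surfaces with $\delta$ real nodes near the old ones, not that these nodes are their only singularities in the big torus.
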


\begin{proof}
The statement can be proved literally in the same manner as \cite[Theorem 5.3]{MMS15}.
\end{proof}

\begin{theorem}\label{thm-floorplanasymptoticP1P1P1}
The number $ N_{\delta,\C}^{\PP^1\times\PP^1\times\PP^1,\floor}(d,e,f)$ of $\delta$-nodal floor plans for surfaces of tridegree $(d,e,f)$ satisfies $$N_{\delta,\C}^{\PP^1\times\PP^1\times\PP^1,\floor}(d,e,f)= (24def)^\delta/\delta!+O(d^{\delta-1}e^{\delta}f^\delta)+O(d^{\delta}e^{\delta-1}f^\delta)+O(d^\delta e^\delta f^{\delta-1}).$$
\end{theorem}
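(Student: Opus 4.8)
The plan is to mirror the proofs of Theorems \ref{thm-floorplanasymptoticP3} and \ref{thm-floorplanasymptoticP2P1}, and most closely the latter, since there (as here) the floor curves all have a fixed (bi)degree independent of the floor index. Given a $\delta$-nodal floor plan of tridegree $(d,e,f)$, I would first associate to it the ordered partition $(\delta_1,\delta_2,\delta_3)$ of $\delta$ recording how many of its $\delta$ node germs are of type $(1)$, $(2)$, $(3)$ in Definition \ref{def-nodegerm}, and relabel the indices so that $i_1<\ldots<i_{\delta_1}$ carry a node germ of type $(1)$, $i_{\delta_1+1}<\ldots<i_{\delta_1+\delta_2}$ of type $(2)$ and $i_{\delta_1+\delta_2+1}<\ldots<i_\delta$ of type $(3)$. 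For a fixed partition and a fixed choice of indices the total contribution of the corresponding floor plans to $N_{\delta,\C}^{\PP^1\times\PP^1\times\PP^1,\floor}(d,e,f)$ is independent of the values $i_j$, so only its leading order in $e$ and $f$ is needed before summing over the indices and then over the partitions.

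Next I would compute these per-node-germ contributions from Proposition \ref{prop-complexcurvecount} and Definition \ref{def-complexmultP1P1P1}. By Proposition \ref{prop-complexcurvecount} the number of $1$-nodal plane curves of bidegree $(e,f)$ through the given stretched points, counted with multiplicity, is $6ef-4e-4f+4$, and a node germ of type $(1)$ carries local complex multiplicity equal to twice the multiplicity of the curve (Definition \ref{def-complexmultP1P1P1}(1),(2)); this gives a factor $2(6ef-4e-4f+4)=12ef+O(e+f)$ per such index. A node germ of type $(2)$ is a horizontal resp.\ vertical end of weight $2$, of which there are $\approx 2e$ resp.\ $\approx 2f$ possible positions, each with local complex multiplicity $2f$ resp.\ $2e$ by Definition \ref{def-complexmultP1P1P1}(3),(4); summing, this is $2e\cdot 2f+2f\cdot 2e=8ef+O(e+f)$ per index. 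Finally a node germ of type $(3)$ is a left resp.\ right string, which by Definition \ref{def-floorplanP1P1P1}(3),(4) aligns with one of the $\approx ef$ horizontal bounded edges of the neighbouring floor $C_{i_j\pm1}$, with local complex multiplicity $2$ in either case --- note that, in contrast with the $\PP^3$ and $\PP^1\times\PP^2$ situations, a right string here cannot align with a vertex --- so the string node germs contribute $2ef+2ef=4ef+O(e+f)$ per index. As expected, the asymptotic ratio $12:8:4$ of the three types is the same $3:2:1$ as in the previous proofs.

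With this in hand the bookkeeping is formally the same as in the proof of Theorem \ref{thm-floorplanasymptoticP2P1}. The leading term of the contribution of all floor plans with a fixed partition $(\delta_1,\delta_2,\delta_3)$ and a fixed choice of indices is $(12ef)^{\delta_1}(8ef)^{\delta_2}(4ef)^{\delta_3}$; since this is independent of the indices, summing it over all admissible index choices --- relaxing the constraints $i_{j+1}>i_j+1$ together with the special requirements at the indices $0$ and $d$, which only affects lower-order terms --- contributes a factor $\frac{d^\delta}{\delta_1!\delta_2!\delta_3!}+O(d^{\delta-1})$. Thus the leading term of the contribution of the partition $(\delta_1,\delta_2,\delta_3)$ is $12^{\delta_1}8^{\delta_2}4^{\delta_3}\frac{1}{\delta_1!\delta_2!\delta_3!}(def)^\delta$, and summing over all ordered partitions of $\delta$ the multinomial theorem gives
$$\sum_{\delta_1+\delta_2+\delta_3=\delta}\frac{12^{\delta_1}8^{\delta_2}4^{\delta_3}}{\delta_1!\delta_2!\delta_3!}=\frac{(12+8+4)^\delta}{\delta!}=\frac{24^\delta}{\delta!},$$
so that $N_{\delta,\C}^{\PP^1\times\PP^1\times\PP^1,\floor}(d,e,f)=(24def)^\delta/\delta!$ up to the stated error. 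The three error terms $O(d^{\delta-1}e^\delta f^\delta)$, $O(d^\delta e^{\delta-1}f^\delta)$, $O(d^\delta e^\delta f^{\delta-1})$ arise respectively from the $O(d^{\delta-1})$ correction in the index count (relative size $1/d$) and from the $O(e+f)$ corrections to the node-germ factors (relative sizes $1/e$ and $1/f$), together with the relaxations just mentioned.

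The main obstacle is the middle step: carrying out the enumeration of node germs on bidegree-$(e,f)$ curves and verifying their local complex multiplicities. Unlike in the $\PP^3$ and $\PP^1\times\PP^2$ cases this is genuinely a two-parameter count, so one has to be careful about which ends of $C_{i_j}$ count as "horizontal" and which as "vertical", about whether a given weight-$2$ end interacts with $C_{i_j-1}$ or with $C_{i_j+1}$, and about the exact number of horizontal bounded edges of a smooth bidegree-$(e,f)$ floor curve; only after these are settled does one see that the various $e^2$ and $f^2$ contributions appearing in the intermediate counts assemble into the clean factor $24ef$. Everything else is routine and identical to the proofs of Theorems \ref{thm-floorplanasymptoticP3} and \ref{thm-floorplanasymptoticP2P1}.
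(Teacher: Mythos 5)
Your proposal is correct and follows essentially the same route as the paper: the same partition of node germs into types $(1)$--$(3)$ with asymptotic per-floor factors $12ef$, $8ef$, $4ef$, the same summation over index choices yielding $d^\delta/(\delta_1!\delta_2!\delta_3!)$, and the same multinomial summation giving $(12+8+4)^\delta/\delta!=24^\delta/\delta!$. The only differences are cosmetic (explicitly invoking Proposition \ref{prop-complexcurvecount} for the leading term $6ef$ and spelling out the multinomial step, which the paper abbreviates as ``in the same way as before'').
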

\begin{proof}
We proceed as in the proof of Theorem \ref{thm-floorplanasymptoticP3}.

Now the number of $C_{i_j}$ satisfying the conditions with a node germ of type (1) has leading term $6ef$, so we obtain a factor of $12ef$.

The number of $C_{i_j}$ with a node germ of type (2) is $2e+2f$, since we have that many possible locations for horizontal or vertical ends of weight $2$. The multiplicity for each horizontal one is $2f$ by Definition \ref{def-complexmultP3}, whereas for the vertical ones we have $2e$, so altogether we obtain a factor of $8ef$.

Finally, consider a curve $C_{i_j}$ with a left string.
It can align with any horizontal bounded edge of $C_{i_j+1}$, and there are $ef$ many asymptotically. For each, the local complex multiplicity is $2$, and so we have a contribution of $2ef$ for these. The same holds for right strings. Altogether, we have $4ef$.

Thus, the leading term of the contribution of all floor plans with fixed partition and fixed choice of indices as above is
$$ (12ef)^{\delta_1}\cdot (8ef)^{\delta_2}\cdot (4ef)^{\delta_3}.$$

Again, we sum this over all choices of indices, obtaining:
\begin{align*}
&\sum_{i_1=1}^{d} \sum_{i_2=1}^{i_1} \ldots \sum_{i_{\delta_1}=1}^{i_{\delta_1-1}} \sum_{i_{\delta_1+1}=1}^{d} \ldots \sum_{i_{\delta_1+\delta_2}=1}^{i_{\delta_1+\delta_2-1}} \sum_{i_{\delta_1+\delta_2+1}=1}^{d} \ldots \sum_{i_{\delta}=1}^{i_{\delta-1}}
 (12ef)^{\delta_1}\cdot (8ef)^{\delta_2}\cdot (4ef)^{\delta_3}\\
 = &\frac{1}{\delta_1!\delta_2!\delta_3!}12^{\delta_1}8^{\delta_2}4^{\delta_3} (def)^\delta.
 \end{align*}

What remains to be done is to sum the coefficient of this expression over all $(\delta_1,\delta_2,\delta_3)$, and in the same way as before we obtain $\frac{1}{\delta!}\cdot 24^\delta$.
\end{proof}

Again, this asymptotics in the complex case was known to the experts. As before, we now use our methods to consider the real case:

\begin{theorem}\label{thm-realfloorplanasymptoticP1P1P1}
Let integers $d,e,f,\delta$ satisfy $0<\delta<\frac{1}{2}\min\{d,e,f\}$, and let the sign vector $s$ be as introduced in
Definition \ref{signvectorP1P1P1}. Then
the number of $\delta$-nodal floor plans of tridegree $(d,e,f)$, counted with real multiplicity w.r.t. $s$, satisfies:
\begin{equation}N_{\delta,\R,s}^{\PP^1\times\PP^1\times\PP^1,\floor}(d,e,f) =\frac{(10def)^\delta}{\delta!}+O(d^{\delta-1}e^{\delta}f^\delta)+O(d^{\delta}e^{\delta-1}f^\delta)+O(d^\delta e^\delta f^{\delta-1}),\label{ne10}\end{equation} where $\delta$ is fixed and $\min\{d,e,f\}\to\infty$.
\end{theorem}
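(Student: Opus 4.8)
The plan is to run the proof of Theorem~\ref{thm-floorplanasymptoticP1P1P1} again, line by line, but with the complex local multiplicities of Definition~\ref{def-complexmultP1P1P1} replaced by the real ones of Definition~\ref{def-realmult1P1P1P1} --- in exactly the way Theorem~\ref{thm-realfloorplanasymptoticP3positive} was deduced from Theorem~\ref{thm-floorplanasymptoticP3}. Throughout, $e$ is assumed even (this is needed already for Definition~\ref{def-realmult1P1P1P1}(1) and for Proposition~\ref{prop-floorplandegdfrealliftsP1P1P1}, which guarantees that these real multiplicities are genuine lower bounds for the numbers of real lifts). As before, to each $\delta$-nodal floor plan we attach the ordered partition $(\delta_1,\delta_2,\delta_3)$ of $\delta$ recording how many of its $\delta$ node germs are of type (1), (2), (3) of Definition~\ref{def-nodegerm}; I first fix the partition and an ordered choice of indices $i_j$ within each type, record the leading term of the resulting (polynomial-in-$i_j$) contribution, then sum over the indices and over the partitions.

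The first step is to recompute, for each type, the leading per-index factor. For type (1): of the $6ef+O(e+f)$ weighted plane tropical curves of bidegree $(e,f)$ with a type-(1) node germ (Proposition~\ref{prop-complexcurvecount} with $d\mapsto e$, $e\mapsto f$), the ones whose germ is the midpoint of a weight-$2$ edge now have real multiplicity $0$, while the ones dual to a parallelogram keep their multiplicity $2$: indeed, all parallelograms occurring in such floor plans have one of the admissible shapes \eqref{ne4} or \eqref{ne5}, and $e$ is even. Since in the complex case the parallelogram germs alone account for $4ef$ of the $12ef$, type (1) now yields a factor $4ef+O(e+f)$. For type (2): the horizontal weight-$2$ ends have real multiplicity $0$, whereas the vertical ones survive --- the bound $k\le 2\left[\frac{e-1}{2}\right]-2\theta+1$ in Definition~\ref{def-realmult1P1P1P1}(4) excludes only $O(1)$ of the admissible positions --- and contribute $4ef+O(e+f)$ with the same count of intersection points as in the complex case; this is the point at which the nonstandard sign vector of Definition~\ref{signvectorP1P1P1} is indispensable, since for the all-positive sign vector the vertical ends would vanish as well, and there are no diagonal ends here to take over as in $\PP^3$ and $\PP^1\times\PP^2$. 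For type (3): a curve carrying a left (resp.\ right) string can be glued to $ef+O(e+f)$ horizontal bounded edges of $C_{i_j+1}$ (resp.\ $C_{i_j-1}$), but by the parity condition on $f+k$ in Definition~\ref{def-realmult1P1P1P1}(5),(6) only half of these alignments carry real multiplicity $2$, giving altogether a factor $ef+ef=2ef$ (there is no ``vertex'' alignment for a right string here, unlike in $\PP^3$). So a single index of type (1), (2), (3) contributes, to leading order, the factors $4ef$, $4ef$, $2ef$, against $12ef$, $8ef$, $4ef$ in the complex count.

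The second step is the same summation as in Theorem~\ref{thm-floorplanasymptoticP1P1P1}. Replacing the ranges of the indices by $1\le i_j\le d$ --- which, as there, ignores the spacing conditions $i_{j+1}>i_j+1$ and the special behaviour at $i=0,d$ without changing the leading term --- Faulhaber's formula turns each block of $\delta_\nu$ nested ordered sums over a constant into $\frac{1}{\delta_\nu!}d^{\delta_\nu}$, so the partition $(\delta_1,\delta_2,\delta_3)$ contributes
$$\frac{4^{\delta_1}\,4^{\delta_2}\,2^{\delta_3}}{\delta_1!\,\delta_2!\,\delta_3!}\,(def)^\delta$$
to leading order. The multinomial theorem then gives
$$\sum_{\delta_1+\delta_2+\delta_3=\delta}\frac{4^{\delta_1}\,4^{\delta_2}\,2^{\delta_3}}{\delta_1!\,\delta_2!\,\delta_3!}=\frac{1}{\delta!}\,(4+4+2)^\delta=\frac{10^\delta}{\delta!},$$
so $N_{\delta,\R,s}^{\PP^1\times\PP^1\times\PP^1,\floor}(d,e,f)=(10def)^\delta/\delta!$ together with the claimed error $O(d^{\delta-1}e^\delta f^\delta)+O(d^\delta e^{\delta-1}f^\delta)+O(d^\delta e^\delta f^{\delta-1})$.

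The main obstacle will be the first step, i.e.\ verifying that the real local multiplicities of Definition~\ref{def-realmult1P1P1P1} really account, to leading order in $e$ and $f$, for the claimed fractions of the complex contributions under the sign vector of Definition~\ref{signvectorP1P1P1}: that the admissibility range for the vertical weight-$2$ ends and the two parity conditions for the strings only cut the relevant counts down by a bounded factor (and not by more), and that every parallelogram node germ arising in these floor plans is of shape \eqref{ne4} or \eqref{ne5}. All of the genuinely delicate lifting analysis behind these multiplicities has, however, already been carried out in Proposition~\ref{prop-floorplandegdfrealliftsP1P1P1} (and the patchworking lemmas it cites), so once the combinatorial accounting is in place, the Faulhaber summation and the multinomial identity are routine, exactly as in the proofs of Theorems~\ref{thm-floorplanasymptoticP1P1P1}, \ref{thm-realfloorplanasymptoticP3positive} and \ref{thm-realfloorplanasymptoticP2P1}.
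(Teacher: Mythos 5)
Your treatment of the even case coincides with the paper's proof in every essential respect: the per-index leading factors $4ef$ (parallelogram germs only, each of real multiplicity $2$), $4ef$ (vertical weight-$2$ ends only, each of multiplicity $2f$, with the cutoff $k\le 2[\frac{e-1}{2}]-2\theta+1$ costing only lower order), and $2ef$ (left and right strings, halved by the parity condition on $f+k$), followed by the same Faulhaber summation over indices and the multinomial identity $(4+4+2)^\delta=10^\delta$. The combinatorial verifications you defer to your ``first step'' (that the parallelograms occurring have shape \eqref{ne4} or \eqref{ne5}, and that the positional restrictions only remove lower-order terms) are treated at the same implicit level in the paper, resting on Proposition \ref{prop-floorplandegdfrealliftsP1P1P1} and Definition \ref{def-realmult1P1P1P1}, so this is not where you diverge.

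The genuine gap is the parity of $e$. You assume ``throughout, $e$ is even'' and never remove this hypothesis, while the theorem is stated for all $d,e,f$ with $0<\delta<\frac{1}{2}\min\{d,e,f\}$; note that for odd $e$ the real multiplicity in Definition \ref{def-realmult1P1P1P1}(1) is not available (the lifting equation \eqref{ne6} has exponent $-e$ in $z_{20}$, and the two real solutions are only guaranteed when $e$ is even), so the even-$e$ computation does not formally say anything about the odd case. The paper closes this case by a separate step: for odd $e$ it invokes Lemma \ref{nl1}, exactly as in the proof of Theorem \ref{thm-realfloorplanasymptoticP2P1}, i.e.\ one runs the even-degree count for the subpolytope obtained by lowering $e$ to the nearest admissible even value and then extends the real point configuration to the full polytope, losing only terms of order $O(d^\delta e^{\delta-1}f^\delta)$, which preserves the asymptotics \eqref{ne10}. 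Without this reduction (or some substitute argument for odd $e$) your proposal proves the statement only under an extra parity assumption that the theorem does not make.
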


\begin{proof}
Suppose, first, that $e$ is even. We adapt Theorem \ref{thm-floorplanasymptoticP1P1P1} to the real multiplicities of Definition \ref{def-realmult1P1P1P1}.

The number of $C_{i_j}$ satisfying the conditions with a node germ of type (1) has leading term $6ef$, but only $2ef$ of those come from parallelograms and the others from midpoints of weight $2$ edges which do not contribute, so we obtain a factor of $4ef$.

The number of $C_{i_j}$ with a node germ of type (2) is $2e+2f$, since we have that many possible locations for horizontal or vertical ends of weight $2$. But only the vertical ones contribute with multiplicity $2f$, so altogether we obtain a factor of $4ef$.

Finally, consider a curve $C_{i_j}$ with a left string.
It can align with any horizontal bounded edge of $C_{i_j+1}$, and there are $ef$ many asymptotically, but only half of them contribute. For each, the local complex multiplicity is $2$, and so we have a contribution of $ef$ for these. The same holds for right strings. Altogether, we have $2ef$.

Thus, the leading term of the contribution of all floor plans with fixed partition and fixed choice of indices as above is
$$ (4ef)^{\delta_1}\cdot (4ef)^{\delta_2}\cdot (2ef)^{\delta_3}.$$

Again, we sum this over all choices of indices, obtaining:
\begin{align*}
&\sum_{i_1=1}^{d} \sum_{i_2=1}^{i_1} \ldots \sum_{i_{\delta_1}=1}^{i_{\delta_1-1}} \sum_{i_{\delta_1+1}=1}^{d} \ldots \sum_{i_{\delta_1+\delta_2}=1}^{i_{\delta_1+\delta_2-1}} \sum_{i_{\delta_1+\delta_2+1}=1}^{d} \ldots \sum_{i_{\delta}=1}^{i_{\delta-1}}
 (4ef)^{\delta_1}\cdot (4ef)^{\delta_2}\cdot (2ef)^{\delta_3}\\
 = &\frac{1}{\delta_1!\delta_2!\delta_3!}4^{\delta_1}4^{\delta_2}2^{\delta_3} (def)^\delta.
 \end{align*}

What remains to be done is to sum the coefficient of this expression over all $(\delta_1,\delta_2,\delta_3)$, and in the same way as before we obtain $\frac{1}{\delta!}\cdot 10^\delta$.

Suppose that $e$ is odd. Then we prove (\ref{ne10}) by means of Lemma \ref{nl1} as it was done in the proof of Theorem \ref{thm-realfloorplanasymptoticP2P1}.
\end{proof}

\bibliographystyle{plain}
\bibliography{bibliographie}

\end {document}